\documentclass[reqno]{amsart}
\usepackage{amssymb,latexsym,amsmath,amsthm,enumerate,amsbsy}
\usepackage[mathscr]{eucal}
\usepackage{framed,color,graphicx}
\usepackage{mathrsfs}
\usepackage[all]{xy}
\usepackage{tikz}
\usepackage{cite}

\makeatletter
\@namedef{subjclassname@2020}{\textup{2020} Mathematics Subject Classification}
\makeatother

\usetikzlibrary{positioning,decorations.pathreplacing,patterns,decorations.pathmorphing}
\tikzset{%
element/.style={draw, shape=circle, fill=white, inner sep=1.4pt}
}

\DeclareSymbolFont{bbold}{U}{bbold}{m}{n}
\DeclareSymbolFontAlphabet{\mathbbold}{bbold}

\theoremstyle{plain}
\newtheorem{theorem}{Theorem}[section]
\newtheorem{lemma}[theorem]{Lemma}
\newtheorem{corollary}[theorem]{Corollary}
\newtheorem{proposition}[theorem]{Proposition}

\newtheorem{problem}[theorem]{Problem}
\newtheorem{conjecture}[theorem]{Conjecture}

\newtheorem{definition}[theorem]{Definition}

\newtheorem{remark}[theorem]{Remark}

\newcommand{\ba}{\mathbf{a}}

\newcommand{\bc}{\mathbf{c}}

\newcommand{\bp}{\mathbf{p}}
\newcommand{\bq}{\mathbf{q}}

\newcommand{\bt}{\mathbf{t}}
\newcommand{\bu}{\mathbf{u}}
\newcommand{\bv}{\mathbf{v}}
\newcommand{\bw}{\mathbf{w}}

\begin{document}

\title[The finite basis problem for the flat semirings $S(W)$]
{The finite basis problem for the flat semirings $S(W)$}

\author{Zidong Gao}
\address{School of Mathematics, Northwest University, Xi'an, 710127, Shaanxi, P.R. China}
\email{zidonggao@yeah.net}

\author{Miaomiao Ren}
\address{School of Mathematics, Northwest University, Xi'an, 710127, Shaanxi, P.R. China}
\email{miaomiaoren@yeah.net}

\author{Xianzhong Zhao}
\address{School of Mathematics and Data Science, Shaanxi University of Science and Technology,
Xi'an, 710021, Shaanxi, P.R. China}
\email{zhaoxz@nwu.edu.cn}

\subjclass[2020]{16Y60, 03C05, 08B26}
\keywords{Flat semiring, finite basis problem, Cross variety, hypergraph semiring}

\begin{abstract}
We focus on the finite basis problem for flat semirings of the form $S(W)$,
where $W$ is an arbitrary set of nonempty words.
We prove that $S(W)$ generates a Cross variety (and hence is finitely based)
whenever every word in $W$ has length at most $3$,
whereas it is nonfinitely based whenever there exists $k \geq 3$ such that
$W$ is $x^{k+2}$-free but not $x^{k+1}$-free.
In particular, if $W_k$ denotes the set of all words of length $k$,
then $S(W_k)$ is finitely based if and only if $k \leq 3$.
Moreover, $S(W)$ is nonfinitely based whenever $W$ is finite and not $x^4$-free.
These results provide a partial answer to an open problem raised by Jackson et al.~(J Algebra 611: 211--245, 2022).
\end{abstract}

\maketitle
\section{Introduction}\label{sec:intro}
An \emph{additively idempotent semiring} (or ai-semiring for short)
is an algebra $(S, +, \cdot)$ of type $(2,2)$ such that the additive reduct $(S, +)$ is a commutative idempotent semigroup,
the multiplicative reduct $(S, \cdot)$ is a semigroup, and the distributive laws
\[
x(y+z) \approx xy + xz, \quad (x+y)z \approx xz + yz
\]
hold.

The additive reduct of an ai-semiring $S$ is a semilattice, and the natural partial order $\leq$ on $S$ defined by
\[
a \leq b\;\Leftrightarrow\; a+b=b
\]
is compatible with addition and multiplication.
Consequently, an ai-semiring is often called a \emph{semilattice-ordered semigroup} (see, e.g., \cite{kp}).
Unless otherwise stated, any order-theoretic statement concerning an ai-semiring refers to this order.

For an ai-semiring $S$, if $J$ is both a multiplicative ideal and an additive order filter of $S$,
we may form the ideal quotient $S/J$ by collapsing all elements of $J$ to a single element,
while leaving all elements outside $J$ as distinct singleton classes.
In this quotient algebra, $J$ serves as the multiplicative zero and the additive maximum element.

The class of ai-semirings (possibly with extra unary operations or constants)
includes the Kleene semiring of regular languages~\cite{con}, the max-plus algebra~\cite{aei},
the semiring of all binary relations on a set~\cite{dol09}, the matrix semiring over an ai-semiring~\cite{bg},
and distributive lattices~\cite{bs}.
These and other similar algebras have played important roles
in several branches of mathematics, including algebraic geometry~\cite{cc}, tropical geometry~\cite{ms}, information science~\cite{gl}, and theoretical computer science~\cite{go}.

A class of ai-semirings is a \emph{variety} if it is closed under taking subalgebras, homomorphic images,
and arbitrary direct products.
By Birkhoff's theorem, a class of ai-semirings is a variety if and only if it is an \emph{equational class};
that is, the class of all ai-semirings satisfying a certain set of identities.
Let $\mathcal{V}$ be an ai-semiring variety.
Then $\mathcal{V}$ is \emph{finitely based} if it can be defined by a finite set of identities; otherwise,
it is \emph{nonfinitely based}.

For any class $\mathcal{K}$ of ai-semirings, let $\mathsf{V}(\mathcal{K})$ denote the variety generated by $\mathcal{K}$;
that is, the smallest variety containing $\mathcal{K}$.
When $\mathcal{K} = \{A_i \mid i \in I\}$, we usually write $\mathsf{V}(\mathcal{K}) = \mathsf{V}(A_i\mid i \in I)$.
It is well known that $\mathsf{V}(\mathcal{K})$ consists of all
homomorphic images of subalgebras of direct products of algebras in $\mathcal{K}$.
An ai-semiring $S$ is finitely based (nonfinitely based) if the variety $\mathsf{V}(S)$ is finitely based (nonfinitely based).

The finite basis problem for a class of ai-semirings, one of the most important problems in universal algebra,
concerns the classification of its members according to whether they are finitely based.
In the last two decades, the finite basis problem for ai-semirings has been intensively studied,
and considerable progress has been made;
see, for example,~\cite{dol07, dol09, dgv25, gjrz, gpz05, jrz, pas05, rlzc, rlyc, rjzl, rzw, sr, shap23, vol21, wrz, wzr, yrzs, zrc}.
For more information on the finite basis problem of other related algebras such as semigroups and monoids,
see the monograph~\cite{lee23}.

Dolinka~\cite{dol07} found the first example of a nonfinitely based finite ai-semiring.
Jackson~\cite{jac:flat} solved the finite basis problem for the flat extensions of finite groups,
thereby providing infinitely many nonfinitely based finite ai-semirings.
Shao and Ren~\cite{sr} proved that every ai-semiring in the variety generated by all ai-semirings of order two is finitely based.
Jackson et al.~\cite{jrz} and Zhao et al.~\cite{zrc} classified three-element ai-semirings with respect to the finite basis property,
showing that $S_7$ is the unique nonfinitely based three-element ai-semiring.
The Cayley tables of $S_7$ are given in Table~\ref{tb24111401}.

\begin{table}[ht]
\caption{The Cayley tables of $S_7$} \label{tb24111401}
\begin{tabular}{c|ccc}
$+$      &$0$&$a$&$1$\\
\hline
$0$ &$0$&$0$&$0$\\
$a$      &$0$&$a$&$0$\\
$1$      &$0$&$0$&$1$\\
\end{tabular}\qquad
\begin{tabular}{c|ccc}
$\cdot$  &$0$&$a$&$1$\\
\hline
$0$ &$0$&$0$&$0$\\
$a$      &$0$&$0$&$a$\\
$1$      &$0$&$a$&$1$\\
\end{tabular}
\end{table}

Moreover, Volkov~\cite{vol21} and Jackson et al.~\cite{jrz} independently
resolved the finite basis problem for the ai-semiring whose multiplicative reduct is the six-element Brandt monoid.
Ren et al.~\cite{rjzl} provided an infinite number of minimal nonfinitely based ai-semiring varieties.
Very recently, the finite basis problem for four-element ai-semirings has been nearly completed, with only two algebras remaining (see \cite{yr2602, yrg, rlyc, rlzc, ryy, yrzs}).

In much of the above work, flat semirings have played an important (sometimes even a decisive) role.
We now recall their definition.
By a \emph{flat semiring} we mean an ai-semiring
whose multiplicative reduct has a zero element $0$ and satisfies $a+b=0$ for all distinct $a,b\in S$.
The three-element algebra $S_7$ is a typical example of a flat semiring.
As observed in~\cite[Lemma 2.2]{jrz} (see also~\cite[Lemma 4.1.1]{ek}),
a semigroup with zero \(0\) becomes a flat semiring if and only if it is \(0\)-cancellative; that is, for all $a, b, c \in S$,
\[
ab = ac \neq 0 \;\Rightarrow\; b = c \quad\text{and}\quad ab = cb \neq 0 \;\Rightarrow\; a = c.
\]

Let $\mathbf{F}$ denote the variety generated by all flat semirings. The following result, due to Jackson et al.~\cite[Lemma 2.1]{jrz}, solves the finite basis problem for $\mathbf{F}$ and characterizes its subdirectly irreducible members.

\begin{lemma}\label{lem24121301}
The variety $\mathbf{F}$ is finitely based, and every subdirectly irreducible member of $\mathbf{F}$ is a flat semiring.
\end{lemma}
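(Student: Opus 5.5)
The plan is to produce an explicit finite set $\Sigma$ of ai-semiring identities valid in every flat semiring, and then to show that every subdirectly irreducible model of $\Sigma$ is itself a flat semiring. This gives both claims at once. By Birkhoff's subdirect representation theorem, $\mathsf{V}(\Sigma)$ is generated by its subdirectly irreducible members. If these are all flat, then $\mathsf{V}(\Sigma)\subseteq\mathbf{F}\subseteq\mathsf{V}(\Sigma)$, so $\mathbf{F}$ is finitely based. Moreover, every subdirectly irreducible member of $\mathbf{F}$ is then flat.

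First I would isolate identities that capture flatness. Sums of distinct elements collapse to $0$, so $x+y$ behaves like an absorbing ``error'' element. Natural candidates are:
\[
x+y \approx (x+y)z(x+y)\text{-type absorption laws}, \qquad x^2+y \approx \cdots,
\]
or, more concretely, the following laws, which I would check directly on flat semirings by splitting into the cases $x=y$ and $x\ne y$:
\begin{itemize}
\item $(x+y)z \approx xz+yz$, already given by distributivity;
\item $xy+z \approx (x+z)y + z$ in suitable forms;
\item $x+y+z \approx x+y+z+ xz$-type identities expressing that $0$ is the maximum;
\item $xzy + xty \approx xzy+xty + x(z+t)y$-type identities.
\end{itemize}
The key ones should be $x+y \approx x+y+ (x+y)w$ and $w(x+y) \approx w(x+y)+(x+y)$, which say that any element lying above a proper sum is absorbing. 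I would also add identities of the form $xy+xz \approx xy+xz+ x(y+z)$ and $xy + x'y \approx xy+x'y+(x+x')y$, which encode $0$-cancellativity at the level of the order.

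Second, let $S$ be a subdirectly irreducible model of $\Sigma$. I would show that the set $J=\{a+b : a\ne b\}$, together with everything above it, is an ideal and filter. I would then argue, using the monolith, that $S$ is flat. The approach is to consider the congruence that collapses the upper set $\uparrow(a+b)$ for a given pair $a\ne b$. The identities make this upper set a multiplicative ideal and a filter, so the ideal quotient by it is a congruence. Subdirect irreducibility then forces all such sets to share a common minimum, namely the top element $0$, which is absorbing. From this we get $a+b=0$ for $a\neq b$, and multiplicative $0$-absorption follows as well. Additive idempotence then gives flatness.

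The main obstacle is the second step: choosing $\Sigma$ strong enough that every nontrivial ideal quotient $S/\uparrow c$ is a genuine congruence, while keeping the monolith argument clean. I expect to need to iterate, that is, to guess identities, test the congruence property, and add the identities that are missing. If this fails, the fallback is to cite the literature directly, since this is exactly \cite[Lemma 2.1]{jrz}.
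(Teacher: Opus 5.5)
\textbf{There is a genuine gap.} The paper does not prove this lemma; it quotes it from \cite[Lemma 2.1]{jrz}. Your overall architecture is the right one: a finite $\Sigma$ valid in all flat semirings, all of whose subdirectly irreducible models are flat. But neither half is carried out, and the concrete ingredients you propose would not work.

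\textbf{The identities.} The ones you single out as key are false in flat semirings.
\begin{itemize}
\item Putting $x=y$ in $x+y\approx x+y+(x+y)w$ gives $x\approx x+xw$. This fails for any nonzero $a$ with $w=0$, since $a+a\cdot 0=a+0=0\neq a$.
\item Putting $x=y$ in $w(x+y)\approx w(x+y)+(x+y)$ gives $wx\approx wx+x$. This fails in $S(ab)$ at $w=a$, $x=b$, because $ab+b=0$.
\item $x+y+z\approx x+y+z+xz$ fails in $S(a)$ at $x=y=z=a$.
\end{itemize}
The rest, such as $xy+xz\approx xy+xz+x(y+z)$, $xy+x'y\approx xy+x'y+(x+x')y$ and $xzy+xty\approx xzy+xty+x(z+t)y$, are instances of distributivity and idempotence. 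So they hold in every ai-semiring, including the two-element distributive lattice $(\{0,1\},\vee,\wedge)$, which is subdirectly irreducible but not flat. They encode nothing about $0$-cancellativity or flatness.

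What is needed are guarded identities whose left side can be nonzero in a flat semiring only when certain variables coincide. For instance,
\[
uxv+uyv+u'xv'\approx uxv+uyv+u'yv',
\]
with each of $u,v,u',v'$ allowed to be empty, giving sixteen identities. In a flat semiring, if the left side is nonzero then $uxv=uyv\neq 0$, so $x=y$ by $0$-cancellativity.

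\textbf{The subdirect irreducibility step.} This does not follow as written. Collapsing an ideal $I$ that is an order filter with $|I|\ge 2$ gives a nontrivial congruence. Subdirect irreducibility then only tells you that the monolith pair lies inside every such $I$. It does not make the sets $\uparrow(a+b)$ singletons or give them a common least element: in the subdirectly irreducible flat semiring $S(ab)$, the set $\{0,ab\}$ is a two-element ideal filter. You also have not shown that $\uparrow(a+b)$ is an ideal, or that $S$ has a top element at all. So ``$a+b=0$'' is not derived.

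\textbf{A mechanism that works.}
\begin{itemize}
\item For $c\in S$, let $a\,\theta_c\,b$ mean $sat\le c\Leftrightarrow sbt\le c$ for all $s,t\in S^1$. Each $\theta_c$ is a congruence, and $\bigcap_c\theta_c$ is trivial: if $a\not\le b$, then $\theta_b$ separates $a$ and $b$. Hence in a subdirectly irreducible $S$ some $\theta_c$ is the identity.
\item Let $Z$ be the set of all $x$ with $sxt\not\le c$ for all $s,t\in S^1$. Then $Z$ is an ideal and an order filter lying in a single $\theta_c$-class, so $|Z|\le 1$.
\item Suppose $s(a+b)t\le c$ for some $s,t$. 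The guarded identity gives $sat+sbt+s'at'=sat+sbt+s'bt'$ for all $s',t'\in S^1$. Hence $s'at'\le c\Leftrightarrow s'bt'\le c$, that is, $a\,\theta_c\,b$, so $a=b$.
\item Therefore $a+b\in Z$ whenever $a\neq b$. Since $S$ is nontrivial, $Z=\{z\}$, where $z$ is the top element and a multiplicative zero, and $S$ is flat.
\end{itemize}
Combined with Birkhoff's theorem, this yields both claims of the lemma.
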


For any cancellative semigroup $(S, \cdot)$,
let $S^0$ denote the semigroup obtained from $S$ by adjoining a multiplicative zero $0$.
Then $S^0$ is $0$-cancellative, and therefore becomes a flat semiring;
we call it the \emph{flat extension} of $S$ and denote it by $\flat(S)$.

Groups, being cancellative, provide a natural source of examples via their flat extensions.
The finite basis problem for the flat extensions of finite groups has been completely solved by Jackson~\cite[Theorem 7.3]{jac:flat}:
if $G$ is a finite group, then the flat extension $\flat(G)$ is finitely based if and only if all Sylow subgroups of $G$ are abelian. Equivalently, $\flat(G)$ is nonfinitely based if and only if $G$ contains a nonabelian nilpotent subgroup.

Subsequently, Jackson et al.~\cite[Theorem 6.1]{jrz} generalized this result: if the multiplicative reduct of a finite ai-semiring $S$ contains a nonabelian nilpotent subgroup, then every finite ai-semiring whose variety contains $S$ is nonfinitely based.

Next, we introduce another important class of flat semirings.
Let $W$ be a set of words in the free semigroup $X^+$ over $X$,
and let $W^{\leq}$ denote the set of all nonempty subwords of words in $W$.
Define a multiplication on $W^{\leq} \cup \{0\}$ by
\[
0 \cdot 0 = 0 \cdot \mathbf{u} = \mathbf{u} \cdot 0 = 0
\]
for all $\mathbf{u} \in W^{\leq}$, and for $\mathbf{u}, \mathbf{v} \in W^{\leq}$,
\[
\mathbf{u} \cdot \mathbf{v} =
\begin{cases}
\mathbf{u}\mathbf{v}, & \text{if } \mathbf{u}\mathbf{v} \in W^{\leq}, \\
0, & \text{otherwise}.
\end{cases}
\]
The resulting algebra is a semigroup and is denoted by $S(W)$.
If the empty word $1$ is allowed in the above construction,
then the corresponding semigroup becomes a monoid, which we denote by $M(W)$.
Working in the free commutative semigroup $X_c^+$ (or monoid $X_c^*$) yields the commutative analogues $S_c(W)$ and $M_c(W)$.
For $W = \{\mathbf{u}_i \mid i \in I\}$, we write $S(\mathbf{u}_i \mid i \in I)$ in place of $S(\{\mathbf{u}_i \mid i \in I\})$
to avoid nested braces, and similarly for $M(W)$, $S_c(W)$ and $M_c(W)$.

All of the above semigroups are $0$-cancellative,
and therefore become flat semirings (which we denote by the same symbols).
Moreover, these flat semirings can be realized as ideal quotients of flat extensions of free semigroups or free monoids,
as we now illustrate.

Suppose that $S$ is an ai-semiring containing a proper nonempty subset $J$
that is both a multiplicative ideal and an additive order filter.
Then $J$ is called an \emph{ideal} of $S$, and we can form the \emph{ideal quotient} $S/J$
by collapsing all elements of $J$ to a single element,
while leaving all elements outside $J$ as distinct singleton classes.
For example, the flat semiring \(S(W)\) is isomorphic to the ideal quotient of \(\flat(X^+)\) with respect to the ideal \(\flat(X^+)\backslash W^{\leq}\).

Jackson et al.~\cite{jrz} proved that every finite flat semiring of the form $M(W)$ or $M_c(W)$ is nonfinitely based.
They also proposed the following problem~\cite[Problem 7.1]{jrz}:

\begin{problem}\leavevmode
\rm
\begin{enumerate}[$(1)$]
\item For which finite sets of words $W$ is $S(W)$ finitely based as an ai-semiring?

\item When $W$ consists of powers of letters and words of length at most $2$, under what condition is $S_c(W)$ finitely based?

\item More generally than in parts (1) and (2): If a finite nilpotent semigroup carries a flat semiring structure, under what conditions is it finitely based?

\item Even more generally, which finite flat semirings are finitely based?
\end{enumerate}
\end{problem}

Recent progress has been made on this problem.
Regarding part (2), it follows from \cite{jrz, wzr} that for a finite set $W$ of words,
$S_c(W)$ is finitely based if and only if every word in $W$
is either a cube of a letter or a word of length at most $2$.
This completes the classification of finite flat semirings of the form $S_c(W)$
with respect to the finite basis property.

For part (3), Gao and Ren~\cite{gr} showed that every finite flat semiring
whose multiplicative reduct is $3$-nilpotent is nonfinitely based.
In contrast, at the 2024 Conference on Theoretical and Computational Algebra,
Marcel Jackson demonstrated that the finite basis property for finite flat semirings
whose multiplicative reduct is $4$-nilpotent is \texttt{NP}-hard.

Concerning part (4), some progress has also been made.
For instance, Jackson et al.~\cite{jrz} showed that every finite flat semiring whose variety contains $S_7$ is nonfinitely based.
Gao et al.~\cite{gjrz} completely solved the finite basis problem for flat semirings in the variety $\mathsf{V}(S_7)$.
Moreover, Ren et al.~\cite{rlzc} and Shaprynski\v{\i}~\cite{shap23} settled the finite basis problem for all four-element flat semirings.

The present paper focuses on part (1).
While parts (2)--(4) have seen substantial progress over the past few years,
part (1) has remained largely untouched.
Indeed, despite its seemingly modest formulation,
part (1) turns out to be considerably more difficult than the others,
owing to the combinatorial complexity of arbitrary word sets.
To the best of our knowledge,
only two previous results are known for part (1).
Wu et al.~\cite[Corollary 4.4]{wzr} established a sufficient condition
under which a finite flat semiring $S(W)$ is nonfinitely based in a restricted setting,
while \cite[Theorem 5.5]{rjzl} provided a family of nonfinitely based infinite flat semirings of the form $S(W)$.
The present paper takes the first substantial step towards a complete resolution of part (1).
We investigate the finite basis problem for flat semirings of the form $S(W)$, where $W$ is not necessarily finite.

The paper is organized as follows.
Section~\ref{sec:prelim} collects the necessary preliminaries.
Section~\ref{sec:2} establishes the basic properties of flat semirings of the form $S(W)$.
In Section~\ref{sec:w3},
we prove that the variety $\mathcal{W}_3 := \mathsf{V}(S(W_3))$ is a Cross variety,
where $W_3$ denotes the set of all words in $X^+$ of length $3$;
as a consequence, $S(W)$ is finitely based whenever every word in $W$ has length at most $3$.
Finally, Section~\ref{sec:NFB} provides a sufficient condition for non-finite basis and applies
it to show that certain flat semirings $S(W)$ are nonfinitely based.

\section{Preliminaries}\label{sec:prelim}
In this section, we collect some basic notions, notation, and tools that will be
used throughout the paper.

Each element of the free semigroups $X^+$ is called a \emph{word} or a \emph{monomial}, written as bold lowercase letters $\mathbf{u}, \mathbf{v}, \mathbf{w},\ldots$, while ordinary lowercase letters $x,y,z,\ldots$ stand for variables.

From \cite[Theorem 2.5]{kp} we know that
the set $P_f(X^+)$ of all nonempty finite subsets of $X^+$,
equipped with addition as set-theoretic union and multiplication as elementwise product,
is the free ai-semiring over $X$.
Consequently, an \emph{ai-semiring term} (or simply a \emph{term}) is precisely a nonempty finite subset of $X^+$.
In keeping with standard semiring notation,
we write the term $\{\mathbf{u}_1, \mathbf{u}_2, \dots, \mathbf{u}_n\}$
as the \emph{polynomial} $\mathbf{u}_1 + \mathbf{u}_2 + \cdots + \mathbf{u}_n$.

Now let $\mathbf{w}$ be a word, let $\bu=\mathbf{u}_1 + \mathbf{u}_2 + \cdots + \mathbf{u}_n$ a polynomial. Then
\begin{itemize}
\item  $c(\mathbf{w})$ denotes the \emph{content} of $\mathbf{w}$; that is, the set of all variables occurring in $\mathbf{w}$.

\item $\ell(\mathbf{w})$ denotes the \emph{length} of $\mathbf{w}$; that is, the number of variables occurring in $\mathbf{w}$ counting multiplicities.

\item $c(\mathbf{u}) = \bigcup_{1\leq i\leq n} c(\mathbf{u}_i)$.

\item $\bu(x_1,\ldots,x_n)$ denotes a polynomial with $c(\bu)\subseteq \{x_1, \ldots, x_n\}$.

\end{itemize}

An \emph{ai-semiring identity} (or simply an \emph{identity}) is
a formal expression of the form $\bu \approx \bv$, where $\bu$ and $\bv$ are polynomials.
We say that an ai-semiring $S$ \emph{satisfies} an identity $\bu \approx \bv$ over $X=\{x_1,\ldots,x_n\}$ if
\[
\bu(a_1, a_2, \dots, a_n) = \bv(a_1, a_2, \dots, a_n)
\]
for all $a_1, a_2, \dots, a_n \in S$.
Equivalently, $\varphi(\bu) = \varphi(\bv)$ for every semiring homomorphism $\varphi \colon P_f(X^+) \to S$.
Such a homomorphism is also called an \emph{assignment},
and is uniquely determined by the images of the elements of $X$;
for convenience, we may denote it simply by $\varphi \colon X \to S$.


Suppose that $\bu = \bu_1 + \bu_2 + \cdots + \bu_m$ and $\bv = \bv_1 + \bv_2 + \cdots + \bv_n$.
Then it is easy to verify that $S$ satisfies the identity $\bu \approx \bv$ if and only if
it satisfies the identities $\bu \approx \bu + \bv_i$ and $\bv \approx \bv + \bu_j$ for all $1 \leq i \leq n$, $1 \leq j \leq m$.
Consequently, $S$ lies in an ai-semiring variety $\mathcal{V}$  if and only if
it satisfies every nontrivial identity of $\mathcal{V}$ of the form $\bp \approx \bp + \bq$,
where $\bp$ is a polynomial and $\bq$ is a word.


For convenience, we abbreviate the following identities
\begin{equation}\label{Ufree}
\bu z \approx z\bu \approx \bu+z \approx \bu,
\end{equation}
where $z \notin c(\bu)$, as $\bu \approx 0$. We call this abbreviation the \emph{$\bu$-free identity}.
If an ai-semiring $S$ satisfies $\bu\approx 0$, then for any assignment $\varphi$,
the element $\varphi(\bu)$ is both a multiplicative zero and an additive maximum element in $S$, which justifies the abbreviation.

Let $S$ be a semigroup with zero element $0$. Then

\begin{itemize}
\item $S$ is \emph{nil} if for every $a \in S$ there exists $k\geq 1$ such that $a^k = 0$.

\item $S$ is \emph{$k$-nil} if there exists $k\geq 1$ such that $a^k = 0$ for all $a \in S$.


\item $S$ is \emph{$k$-nilpotent} if $S^k = \{0\}$ for some $k \geq 1$.

\item $S$ is \emph{nilpotent} if it is $k$-nilpotent for some $k \geq 1$.

\end{itemize}
An ai-semiring is called \emph{nil} (respectively, \emph{$k$-nil}, \emph{nilpotent}, \emph{$k$-nilpotent}) if its multiplicative reduct satisfies the corresponding property.

\begin{remark}\label{finnil}
Every nilpotent semigroup is nil, but the converse is not true in general. However, every finite nil semigroup is nilpotent \rm (see  \cite[Lemma~3.1.16]{sapir14}).
\end{remark}

Let $\bu$ and $\bv$ be words in $X^+$.
We say that $\bv$ is \emph{$\bu$-free} if $\varphi(\bu)$ is not a subword of $\bv$ for any semigroup homomorphism $\varphi\colon X^+\to X^+$.
In the same spirit,
a set $W$ of words in $X^+$ is \emph{$\bu$-free} if every word in $W$ is $\bu$-free.
It is easy to see that $W$ is not $\bu$-free if and only if
$\varphi(\bu)\in W^{\leq}$ for some semigroup homomorphism $\varphi\colon X^+\to X^+$.
Moreover, the flat semiring $S(W)$ satisfies the $\bu$-free identity $\bu \approx 0$ if and only if $W$ is $\bu$-free.
In particular, $S(W)$ is $k$-nilpotent if and only if $W$ is $x_1\cdots x_k$-free
(i.e., every word in $W$ has length less than $k$),
and $S(W)$ is $k$-nil if and only if $W$ is $x^k$-free.

\section{The flat semiring $S(W)$}\label{sec:2}
In this section,
we establish the basic properties of flat semirings of the form $S(W)$.
Some of the material below is drawn from \cite{jrz,gr};
for the reader's convenience and to keep the paper self-contained,
we summarize the parts that will be used.

Recall that a nontrivial algebra is \emph{subdirectly irreducible} if it has a least non-diagonal congruence.
It is easy to see that there is a one-to-one, inclusion-preserving
correspondence between semiring congruences and multiplicative ideals on a flat semiring.
This correspondence hinges on the fact that, in a flat semiring,
every multiplicative ideal naturally determines a semiring congruence, since it is an additive filter.
Conversely, every semiring congruence on a flat semiring with zero $0$
has its $0$-congruence class as a multiplicative ideal.
This yields the desired one-to-one inclusion-preserving correspondence.
Consequently, a flat semiring is subdirectly irreducible if and only if it has a least nonzero multiplicative ideal.

By \cite[Proposition 3.17]{b}, the following proposition follows readily.

\begin{proposition}\label{siflat}
Let $(S_i)_{i\in I}$ be a family of flat semirings and let $S$ be a flat semiring.
Then $S$ is isomorphic to a subdirect product of $(S_i)_{i\in I}$ if and only if
there exists a family $(J_i)_{i\in I}$ of ideals of $S$ such that $S/J_i$ is isomorphic to $S_i$
for each $i\in I$ and $\bigcap_{i\in I} J_i = \{0\}$.
\end{proposition}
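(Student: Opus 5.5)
The plan is to combine the general characterization of subdirect products in \cite[Proposition 3.17]{b} with the ideal–congruence correspondence for flat semirings recorded just before the statement.

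\emph{Translating congruences into ideals.} Recall that $S$ is isomorphic to a subdirect product of $(S_i)_{i\in I}$ if and only if there is a family of congruences $(\theta_i)_{i\in I}$ on $S$ with $\bigcap_{i} \theta_i = \Delta_S$ and $S/\theta_i \cong S_i$ for each $i$. It therefore suffices to show two things:
\begin{enumerate}[$(a)$]
\item for flat $S$ with $S_i$ flat, each congruence $\theta_i$ with $S/\theta_i\cong S_i$ has the form $\theta_{J_i}$ for an ideal $J_i$, so that $S/\theta_i = S/J_i$;
\item $\bigcap_i \theta_{J_i}=\Delta_S$ holds exactly when $\bigcap_i J_i=\{0\}$.
\end{enumerate}

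\emph{Step (a).} Let $\theta$ be a semiring congruence on a flat semiring $S$, and let $J=0/\theta$. Then $J$ is a multiplicative ideal: if $a\,\theta\,0$ then $ab\,\theta\,0b=0$. It is also an additive filter, since $a\le b$ and $a\,\theta\,0$ give $b=a+b\,\theta\,0+b=0$, because $0$ is the additive top in a flat semiring.

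Next, $\theta$ has no nontrivial classes other than $J$. Suppose $a\,\theta\,b$ with $a\ne b$. Then $a = a+a\,\theta\,a+b = 0$, so $a\in J$, and likewise $b\in J$. Hence $\theta=\theta_J$, the Rees-type congruence collapsing $J$.

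If $J=S$, then $S/\theta$ is trivial. A flat semiring $S_i$ is nontrivial unless it is a singleton, and the trivial case is handled trivially (take $J_i=S$, understood as the improper ideal, or note the convention). Otherwise $J$ is a proper ideal and $S/\theta=S/J$. Conversely, every ideal $J$ determines the congruence $\theta_J$, whose quotient is $S/J$.

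\emph{Step (b).} Since $\theta_J$ relates $a\ne b$ exactly when $a,b\in J$, we have
\[
\bigcap_i\theta_{J_i}=\Delta_S \iff \text{there are no distinct } a,b \text{ lying in all } J_i.
\]
Each $J_i$ contains $0$, so this is equivalent to $\bigcap_i J_i=\{0\}$. Indeed, if some $a\ne 0$ lies in $\bigcap_i J_i$, then the pair $(a,0)$ lies in every $\theta_{J_i}$; the converse is clear.

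Putting (a) and (b) together with \cite[Proposition 3.17]{b} gives both directions of the statement.

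\emph{Main obstacle.} The only delicate point is Step (a), namely that every congruence on a flat semiring is of Rees type. This is precisely the argument $a=a+a\,\theta\,a+b=0$, so no real difficulty is expected. Beyond that, one only needs to be careful with the degenerate case in which some $J_i=S$.
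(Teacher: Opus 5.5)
Your proposal is correct and takes the same route as the paper, which simply invokes \cite[Proposition 3.17]{b} together with the congruence--ideal correspondence for flat semirings stated just before the proposition. You also fill in details the paper leaves implicit: that every congruence on a flat semiring collapses only its $0$-class (via $a=a+a\,\theta\,a+b=0$), and that $\bigcap_i\theta_{J_i}=\Delta_S \iff \bigcap_i J_i=\{0\}$. You also rightly flag the degenerate case of a trivial $S_i$, where the paper's requirement that ideals be proper subsets actually matters.
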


A nonzero element $\omega$ of a flat semiring $S$ is an \emph{annihilator}
if $\omega s = s\omega = 0$ for all $s \in S$.
From the definition it follows directly that the annihilators of $S(W)$ are precisely the maximal words in $W$ under the subword order.
The following result characterizes subdirectly irreducible nil flat semirings in terms of annihilators.

\begin{lemma}[{\cite[Proposition 1.4]{gr}}]\label{nilsi}
A nil flat semiring $S$ is subdirectly irreducible if and only if
it has a unique annihilator $\omega$ and $\omega\in S^{1}aS^{1}$ for all $a\in S\backslash\{0\}$.
In this case, $\{0,\omega\}$ is the least non-zero multiplicative ideal of $S$.
\end{lemma}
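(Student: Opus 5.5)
We need to prove Lemma (nilsi): a nil flat semiring $S$ is subdirectly irreducible if and only if it has a unique annihilator $\omega$ and $\omega\in S^1aS^1$ for every nonzero $a$; in that case $\{0,\omega\}$ is the least nonzero ideal.

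The plan is to work entirely through the criterion recorded above: a flat semiring is subdirectly irreducible if and only if it has a least nonzero multiplicative ideal. Two elementary observations drive everything.

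\emph{Observation 1.} For any annihilator $\omega$, the set $\{0,\omega\}$ is a multiplicative ideal, since $\omega s=s\omega=0$ and $0$ is absorbing.

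\emph{Observation 2.} Every nonzero ideal $I$ of a nil semiring contains an annihilator. Take a nonzero $a\in I$. If $a$ is not an annihilator, some product $sa$ or $as$ is nonzero; replace $a$ by it. This yields a chain $a=a_0,a_1,a_2,\dots$ of nonzero elements of $I$, each obtained from the previous one by multiplying on one side by an element of $S$.

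The point needing care is why this chain must stop at an annihilator, since a nil semiring need not be nilpotent. The plan is to use $0$-cancellativity together with flatness.
\begin{itemize}
\item First try to show that the principal ideal $S^1aS^1$ contains an element $b$ with $Sb=bS=\{0\}$.
\item Suppose instead the process never terminates. Then every nonzero $b\in S^1aS^1$ has a nonzero left or right multiple. I would try to extract a nonzero idempotent-like behaviour: find $s$ with $sb\neq0$ and relate it to the nilness of $s$.
\item Let me check whether a counterexample exists. The infinite nil semigroup $S(W)$ with $W$ the set of all $x^k$ has no maximal word, so it has no annihilator. It is nonetheless nil and flat, and its nonzero ideals such as $\{x^m : m\ge n\}\cup\{0\}$ have empty intersection.
\end{itemize}
So in that example $S$ is not subdirectly irreducible, which is consistent with the lemma. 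This shows, however, that Observation 2 can fail in the infinite case. I will therefore avoid it in the forward direction and instead use the least nonzero ideal directly.

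\emph{($\Rightarrow$)} Let $M$ be the least nonzero ideal and take a nonzero $m\in M$. The set $S^1mS^1$ is a nonzero ideal contained in $M$, so it equals $M$; the same holds for every nonzero $m\in M$. I claim $M$ contains an annihilator. If $m$ is not one, then, say, $sm\neq0$, and $S^1smS^1=M$ gives $m=u\,s\,m\,v$ for some $u,v\in S^1$. Iterating gives $m=(us)^k\,m\,v^k$. If $us\in S$, nilness gives $(us)^k=0$ for some $k$, whence $m=0$, a contradiction. If $u=1$, then $s\in S$ and $s^k=0$ yields the same contradiction. So every nonzero $m\in M$ is an annihilator. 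By Observation 1, $M=\{0,\omega\}$, and $M$ has exactly one nonzero element since it is generated by any of them. Uniqueness of the annihilator follows: any annihilator $\omega'$ gives an ideal $\{0,\omega'\}\supseteq M$, forcing $\omega'=\omega$. Finally, for any nonzero $a$, minimality gives $\{0,\omega\}\subseteq S^1aS^1$.

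\emph{($\Leftarrow$)} Any nonzero ideal contains some nonzero $a$, hence contains $S^1aS^1\ni\omega$, so it contains $\{0,\omega\}$. Together with Observation 1, this makes $\{0,\omega\}$ the least nonzero ideal, and the stated criterion gives subdirect irreducibility.

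The main obstacle is the forward direction: showing that elements of the minimal ideal are annihilators using only nilness, not nilpotency. The identity $m=(us)^k m v^k$ is the intended route to handle this.
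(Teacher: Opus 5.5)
Your proof is correct and complete. The paper does not prove this lemma itself; it imports it from \cite[Proposition 1.4]{gr}, so there is no in-paper argument to compare against. Your route is a sound self-contained proof. It goes through the criterion stated at the start of Section~\ref{sec:2}: a flat semiring is subdirectly irreducible if and only if it has a least nonzero multiplicative ideal.

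The key step works as follows. For nonzero $m$ in the least nonzero ideal $M$ one has $M=S^1mS^1$. So $sm\neq 0$ forces $m=usmv$, hence $m=(us)^k m v^k$ with $us\in S$ nil, which gives $m=0$. Therefore every nonzero element of $M$ is an annihilator, and $M=S^1\omega S^1=\{0,\omega\}$. Your split into the cases $u\in S$ and $u=1$ is unnecessary, since $us\in S$ in either case. The converse direction, as you wrote it, uses neither nilness nor uniqueness of $\omega$; uniqueness follows automatically there.

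Your exploratory aside contains an error, though it does not affect the final argument because you discard Observation~2. The semiring $S(\{x^k\mid k\geq 1\})$ is not nil: $x^n\neq 0$ for every $n$. It is the flat extension of an infinite cyclic semigroup, which is exactly the non-nil case disposed of in the proof of Proposition~\ref{swsi}. Also, the intersection of the ideals you list is $\{0\}$, not empty.

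Observation~2 does genuinely fail for nil flat semirings, but it needs a different witness. Take $S(\{x_1x_2\cdots x_n\mid n\geq 1\})$ with pairwise distinct letters $x_i$. Every element squares to $0$, yet $x_i\cdots x_j\cdot x_{j+1}\neq 0$, so there is no annihilator at all. This is why the existence of an annihilator is a genuine condition in the lemma and not a consequence of nilness.
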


\begin{lemma}[{\cite[Proposition 1.5]{gr}}]\label{nilpotentsi}
A nilpotent flat semiring is subdirectly irreducible if and only if it has a unique annihilator $\omega$.
In this case, $\{0,\omega\}$ is its least nonzero multiplicative ideal.
\end{lemma}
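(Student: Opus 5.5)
Since $S$ is nilpotent it is nil, so Lemma~\ref{nilsi} applies: $S$ is subdirectly irreducible if and only if it has a unique annihilator $\omega$ and $\omega\in S^1aS^1$ for every $a\in S\backslash\{0\}$, in which case $\{0,\omega\}$ is the least nonzero multiplicative ideal. The plan is therefore to show that, under nilpotency, the second condition follows automatically from the first. The "only if" direction and the final sentence of the statement then come directly from Lemma~\ref{nilsi}.

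For the "if" direction, suppose $S^k=\{0\}$ and $\omega$ is the unique annihilator. Fix $a\neq 0$. Among all nonzero elements of $S^1aS^1$, take one of the form $b=uav$ ($u,v\in S^1$) that maximizes the total number of factors from $S$ used to write it. This is well defined: a nonzero product of elements of $S$ has at most $k-1$ factors, so the number of factors is bounded. I claim $b$ is an annihilator. If $bs\neq 0$ for some $s\in S$, then $bs=ua(vs)$ is a nonzero element of $S^1aS^1$ written with strictly more factors, contradicting maximality; the case $sb\neq 0$ is symmetric. Also $b\neq 0$ by choice. Hence $b$ is an annihilator, and uniqueness gives $b=\omega$, so $\omega\in S^1aS^1$.

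The only subtle point is to make the maximality argument precise. I would phrase it as follows: let $m$ be the largest integer such that there exist $s_1,\dots,s_m\in S$ and an index $i$ with $s_i=a$ and $s_1\cdots s_m\neq 0$. Such an $m$ exists because $m=1$ works with $s_1=a$, and $m\le k-1$. A maximal product of this kind cannot be extended by another factor on either side without becoming $0$, so it is an annihilator. Flatness is not needed beyond the availability of Lemma~\ref{nilsi}.
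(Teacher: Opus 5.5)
Your proof is correct. The paper gives no proof of its own here; it only quotes \cite[Proposition 1.5]{gr}, placed immediately after Lemma~\ref{nilsi}. Your derivation is the natural way to obtain it from that lemma. Nilpotency lets you take a nonzero product $s_1\cdots s_m$ of maximal length with some $s_i=a$, where $m\le k-1$. Such a product is forced to be an annihilator, hence equals $\omega$, so $\omega\in S^1aS^1$. Lemma~\ref{nilsi} then gives both directions and the description of the least nonzero ideal. Your maximal-length formulation handles the one delicate point rigorously.
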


\begin{lemma}[{\cite[Lemma 1.7]{gr}}]\label{sk0w}
Let $S$ be a nilpotent flat semiring containing a unique annihilator $\omega$.
If $S^{k+1} = \{0\}$ but $S^{k} \neq \{0\}$ for some $k\geq 1$, then $S^{k} = \{0, \omega\}$.
\end{lemma}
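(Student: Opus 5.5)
We prove that $S^k=\{0,\omega\}$ by establishing the two inclusions $\omega\in S^k$ and $S^k\subseteq\{0,\omega\}$. Throughout we use that $S^{k+1}=S^kS=SS^k=\{0\}$ and that $S^k\neq\{0\}$ by hypothesis.

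For $S^k\subseteq\{0,\omega\}$, take any nonzero $a\in S^k$. Then $aS\subseteq S^{k+1}=\{0\}$ and $Sa\subseteq S^{k+1}=\{0\}$, so $as=sa=0$ for every $s\in S$. Hence $a$ is a nonzero element annihilated on both sides, i.e. an annihilator of $S$. By hypothesis $\omega$ is the unique annihilator, so $a=\omega$. Therefore every element of $S^k$ is either $0$ or $\omega$.

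For $\omega\in S^k$: since $S^k\neq\{0\}$, there is a nonzero element in $S^k$, and by the previous paragraph it must be $\omega$. Hence $\omega\in S^k$. It remains only to note that $0\in S^k$, which holds because $0=0\cdot0\cdots0$ is a product of $k$ elements of $S$.

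Combining the two inclusions gives $S^k=\{0,\omega\}$. There is no real obstacle here. The only point needing care is that $S^k$ denotes the set of all products of $k$ elements, not the additive closure of that set. If one preferred the additive closure, the conclusion is unaffected, since in a flat semiring $\omega+\omega=\omega$ and $\omega+0=0$, so $\{0,\omega\}$ is already closed under addition. Alternatively, the result follows from Lemma~\ref{nilpotentsi}: the least nonzero ideal $\{0,\omega\}$ is contained in the nonzero ideal $S^k$, and the annihilator argument above gives the reverse inclusion.
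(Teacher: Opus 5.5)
Your proof is correct and complete. Since $S^kS\cup SS^k\subseteq S^{k+1}=\{0\}$, every nonzero element of $S^k$ is an annihilator and hence equals $\omega$, while $S^k\neq\{0\}$ together with $0\in S^k$ gives the reverse inclusion. The paper does not prove this lemma itself; it only quotes it from \cite[Lemma 1.7]{gr}, and your direct annihilator argument is the natural proof (it uses only $S^{k+1}=\{0\}$, not the full nilpotency hypothesis).
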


\begin{lemma}[{\cite[Proposition 1.10]{gr}}]\label{finsubf}
Every subvariety of $\mathbf{F}$ can be generated by a single flat semiring.
In particular, every finitely generated subvariety of $\mathbf F$ can be generated by a finite flat semiring.
\end{lemma}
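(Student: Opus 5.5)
The plan is to reduce to subdirectly irreducible members and then glue a family of flat semirings into one flat semiring by a ``$0$-direct union''.

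\emph{Step 1: the gluing construction.} Let $(S_i)_{i\in I}$ be a family of flat semirings with zeros $0_i$. Let $S$ be the disjoint union of the sets $S_i\setminus\{0_i\}$ together with one new zero $0$. Define multiplication inside each component as in $S_i$, where a product equal to $0_i$ becomes $0$. Any product of elements from different components is $0$, and so is any product involving $0$. Associativity is immediate.

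$S$ is $0$-cancellative. Indeed, if $ab=ac\neq0$, then $a$, $b$ and $c$ all lie in the same component $S_i$, and cancellativity in $S_i$ gives $b=c$. The same argument works on the other side. Hence $S$ is a flat semiring by the criterion recalled in the introduction.

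Each $S_i$ embeds in $S$ as the subset $(S_i\setminus\{0_i\})\cup\{0\}$. This subset is closed under multiplication and under addition, since $a+b=0$ for $a\neq b$. So $\mathsf V(S_i\mid i\in I)\subseteq\mathsf V(S)$.

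For the reverse inclusion, put $J_i:=S\setminus(S_i\setminus\{0_i\})$. This set is a multiplicative ideal: a product involving an element outside component $i$ is $0$ or lies outside component $i$. In a flat semiring, any set containing $0$ is an additive filter, because $0$ is the top element. Clearly $S/J_i\cong S_i$ and $\bigcap_i J_i=\{0\}$, provided $|I|\ge 2$; the case $|I|=1$ is trivial. Proposition~\ref{siflat} then shows that $S$ is a subdirect product of the $S_i$, so $S\in\mathsf V(S_i\mid i\in I)$.

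\emph{Step 2: arbitrary subvarieties.} Let $\mathcal V\subseteq\mathbf F$. By Birkhoff's subdirect representation theorem, $\mathcal V$ is generated by its subdirectly irreducible members. By Lemma~\ref{lem24121301}, these are flat semirings. Taking $(S_i)$ to be a set of representatives of them up to isomorphism, Step 1 gives a single flat semiring $S$ with $\mathsf V(S)=\mathcal V$. If $\mathcal V$ is trivial, the one-element flat semiring $\{0\}$ generates it.

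\emph{Step 3: finitely generated subvarieties.} This is the only step needing a little care, since Step 2 may produce an infinite $S$. Suppose $\mathcal V=\mathsf V(A)$ with $A$ finite; a finite generating family can be replaced by its finite direct product. The finite algebra $A$ is a subdirect product of finitely many subdirectly irreducible quotients $A/\theta$. Each of these is finite, lies in $\mathcal V\subseteq\mathbf F$, and is therefore a finite flat semiring by Lemma~\ref{lem24121301}. These finitely many finite flat semirings generate $\mathcal V$: they lie in $\mathcal V$, and $A$ lies in the variety they generate. Applying the construction of Step 1 to this finite family yields a finite flat semiring, of size at most $1+\sum|A/\theta|$, which generates $\mathcal V$.
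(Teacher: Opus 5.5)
\textbf{Gap in Step~2.} The paper gives no proof of this lemma; it only quotes \cite[Proposition~1.10]{gr}. Your strategy is the natural one: glue subdirectly irreducible flat semirings along a common zero, then use Proposition~\ref{siflat} to see that the glued algebra is a subdirect product of the pieces. Steps~1 and~3 are correct as written; associativity, $0$-cancellativity, the ideals $J_i$ and $\bigcap_i J_i=\{0\}$ all check out. Step~2, however, fails as stated. You take ``a set of representatives'' of all subdirectly irreducible members of $\mathcal V$ up to isomorphism, and in general no such set exists. For $\mathcal V=\mathbf F$, every flat extension $\flat(G)$ of a group $G$ has only the ideals $\{0\}$ and $\flat(G)$. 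So each $\flat(G)$ is simple, hence subdirectly irreducible, and $\mathbf F$ has subdirectly irreducible members of every infinite cardinality. Their isomorphism types therefore form a proper class, and the $0$-direct union of a proper class is not an algebra.

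\textbf{Fix.} The repair mirrors your own Step~3. Choose a single algebra $A$ with $\mathsf V(A)=\mathcal V$, for example the free algebra of $\mathcal V$ on countably many generators. Write $A$ as a subdirect product of its subdirectly irreducible quotients $A/\theta$. These quotients are indexed by a set of congruences, lie in $\mathcal V$, are flat by Lemma~\ref{lem24121301}, and generate $\mathcal V$ because $A$ lies in the variety they generate. Now apply Step~1 to this set. Equivalently, restrict to the finitely generated subdirectly irreducible members of $\mathcal V$. These are countable, so they form a set up to isomorphism. They still generate $\mathcal V$, since any identity failing in $\mathcal V$ fails in some finitely generated free algebra of $\mathcal V$, and hence in one of its subdirectly irreducible quotients. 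With this change your proof is complete.
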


The following construction, based on the least non-zero ideal $\{0,\omega\}$ in Lemma~\ref{nilsi}, will be useful in the sequel.

\begin{definition}[{\cite[Definition 1.11]{gr}}]
A flat semiring $S$ is the \emph{$\{0,\omega\}$-direct union} of a family $(S_i)_{i\in I}$ of subdirectly irreducible flat nil-semirings,
denoted by $\bigcup_{i\in I}^{\omega} S_i$,
if there exist a family $(S_i')_{i\in I}$ of subsemirings of $S$ such that
\[
S = \bigcup_{i\in I} S_i',
\quad
S_i' \cong S_i,
\quad
S_j' \cap S_k' = \{0, \omega\},
\quad
S_j' \cdot S_k' = \{0\}
\]
for all $i, j, k \in I$ with $j \neq k$.
\end{definition}

\begin{remark}
The above definition itself indicates how to construct the $\{0, \omega\}$-direct union for any family $(S_i)_{i\in I}$ of subdirectly irreducible nil flat semirings. In the sequel we sometimes denote by $S_1 \circ S_2$ the $\{0, \omega\}$-direct union of $S_1$ and $S_2$.
\end{remark}

\begin{lemma}[{\cite[Proposition 1.12]{gr}}]
Suppose that $\{S_i\}_{i\in I}$ is a family of subdirectly irreducible flat nil-semirings with pairwise intersection $\{0,\omega\}$.
If $J$ is a subset of $I$ and $T_i$ is a subalgebra of $S_i$ containing $\omega$ for each $i\in J$,
then $\bigcup_{i\in J}^{\omega} T_i$ is a subalgebra of $\bigcup_{i\in I}^{\omega} S_i$.
\end{lemma}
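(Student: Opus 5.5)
Let $S=\bigcup_{i\in I}^{\omega} S_i$, realized as $S=\bigcup_{i\in I}S_i'$ with $S_i'\cong S_i$, $S_j'\cap S_k'=\{0,\omega\}$ and $S_j'S_k'=\{0\}$ for $j\neq k$. Replace each $S_i$ by $S_i'$ and each $T_i$ by its image $T_i'\subseteq S_i'$; $T_i'$ is a subalgebra containing $\omega$ and isomorphic to $T_i$. Put $T=\bigcup_{i\in J}T_i'\subseteq S$. The plan has three steps: show $T$ is closed under both operations of $S$; check that $T$ satisfies the defining conditions of a $\{0,\omega\}$-direct union of $(T_i)_{i\in J}$; and confirm that each $T_i$ is a subdirectly irreducible nil flat semiring, so the notation $\bigcup^{\omega}_{i\in J}T_i$ is legitimate.

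\emph{Closure.} Since $J$ is presumably nonempty and each $T_i'$ is a subalgebra, $0=\omega\cdot\omega\in T$ (as $\omega$ is an annihilator) and $\omega\in T$. Take $a,b\in T$, say $a\in T_j'$ and $b\in T_k'$.
\begin{itemize}
\item If $j=k$, then $ab$ and $a+b$ lie in $T_j'$, because $T_j'$ is a subalgebra.
\item If $j\neq k$, then $ab\in S_j'S_k'=\{0\}\subseteq T$.
\item If $j\neq k$, then $a+b\in\{a,0\}$, since $S$ is flat: $a+b=a$ when $a=b$ and $a+b=0$ otherwise. Either way $a+b\in T$.
\end{itemize}
Hence $T$ is a subalgebra of $S$, and in particular a flat semiring.

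\emph{Direct-union conditions.} $T$ is the union of the $T_i'$ with $T_i'\cong T_i$. For $j\neq k$ we have $T_j'\cap T_k'\subseteq S_j'\cap S_k'=\{0,\omega\}$, and the reverse inclusion holds because both contain $0$ and $\omega$. Also $T_j'T_k'\subseteq S_j'S_k'=\{0\}$.

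\emph{Subdirect irreducibility of $T_i$.} This is the only point needing care. $T_i$ is nil, being a subsemigroup of the nil semigroup $S_i$. For the characterization of subdirectly irreducible nil flat semirings (Lemma~\ref{nilsi}), I would argue as follows. Take any nonzero $a\in T_i$. If $a\neq\omega$, then $a$ is not an annihilator of $T_i$: otherwise $\{0,a\}$ would be an ideal of $S_i$ not containing $\omega$, since $aS_i$ and $S_ia$ would still be checked only inside $T_i$. The gap in this argument is that the condition $\omega\in T_i^1aT_i^1$ need not be inherited from $S_i$, since the elements witnessing $\omega\in S_i^1aS_i^1$ may lie outside $T_i$. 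So I expect the intended reading to be the weaker one: $T_i$ contains $\omega$ and is a subalgebra, and the lemma asserts that the union built from subalgebras containing $\omega$, glued along $\{0,\omega\}$, is a subalgebra. Under that reading, closure plus the intersection and product conditions already prove the statement. If genuine subdirect irreducibility of the $T_i$ is required, it must be assumed as a hypothesis, as the notation presupposes.
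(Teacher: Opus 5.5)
Your closure argument is correct, and it is all the statement needs. The paper gives no proof of its own here; it cites \cite[Proposition 1.12]{gr}, and the content is exactly the routine check you give:
\begin{itemize}
\item same-component sums and products stay in the subalgebra $T_j'$;
\item cross products lie in $S_j'S_k'=\{0\}$;
\item cross sums lie in $\{a,0\}$ by flatness;
\item $0=\omega\omega\in T$, provided $J\neq\emptyset$.
\end{itemize}

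One correction to your last paragraph. You claim that a nonzero $a\neq\omega$ cannot annihilate $T_i$, because otherwise $\{0,a\}$ would be an ideal of $S_i$. This is false. Annihilating $T_i$ says nothing about $aS_i$ or $S_ia$, so $\{0,a\}$ need not be an ideal of $S_i$.

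For a concrete counterexample, take $S(aba)$ and the set $T=\{0,b,aba\}$. It is a subalgebra containing $\omega=aba$, since $b\cdot b=b\cdot aba=aba\cdot b=0$ and sums of distinct elements are $0$. In $T$, however, $b$ is a second annihilator, so $T$ is not subdirectly irreducible by Lemma~\ref{nilpotentsi}. Meanwhile $\{0,b\}$ is not an ideal of $S(aba)$, because $ab\neq 0$.

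So subdirect irreducibility of the $T_i$ really does fail in general; it is not just hard to prove. Your reading of $\bigcup_{i\in J}^{\omega}T_i$ as the union of the $T_i$ glued along $\{0,\omega\}$ is therefore forced, not merely convenient. Under that reading the intersection and zero-product conditions are what matter, and you verify them. With that reading your proof is complete.
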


\begin{lemma}[{\cite[Proposition 1.14]{gr}}]\label{0wunion}
The $\{0,\omega\}$-direct union of a family of subdirectly irreducible
flat nil-semirings is also subdirectly irreducible.
\end{lemma}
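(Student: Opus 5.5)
Write $S=\bigcup^{\omega}_{i\in I} S_i$, with copies $S_i'\subseteq S$ as in the definition. By Lemma~\ref{nilsi}, it suffices to prove three things: that $S$ is a nil flat semiring, that $\omega$ is its unique annihilator, and that $\omega\in S^1aS^1$ for every nonzero $a\in S$.

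\emph{$S$ is nil and flat.} $S$ is flat by definition. For nilness, any $a\in S$ lies in some $S_i'$, and $S_i'\cong S_i$ is nil, so some power $a^k$ equals $0$ inside $S_i'$. Since $S_i'$ is a subsemiring, this is also a power computed in $S$.

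\emph{$\omega$ is an annihilator of $S$.}
\begin{itemize}
\item By Lemma~\ref{nilsi}, each $S_i$ has a unique annihilator. Since $S_j'\cap S_k'=\{0,\omega\}$, the element $\omega$ is the common nonzero element, so it must correspond to the annihilator of each $S_i'$.
\item To justify this, note that the annihilator $\omega_i$ of $S_i'$ lies in $S_i'bS_i'$-type ideals generated by any nonzero $b\in S_i'$, in particular by $b=\omega$. Since $\omega$ is an element of a flat semiring ideal, the ideal it generates is $\{0,\omega\}$ when $\omega$ is an annihilator. The cleanest route is to read the definition as requiring $\omega$ to be the annihilator of every $S_i'$ (this is implicit in the notation). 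I would state this explicitly, arguing that otherwise $\omega$ could not be shared compatibly.
\item Given this, take $s\in S$, say $s\in S_j'$. If $\omega s$ is computed within $S_j'$, it equals $0$.
\end{itemize}
So $\omega$ annihilates all of $S$.

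\emph{Uniqueness of the annihilator.} Let $a\neq 0,\omega$, say $a\in S_i'$. Since $S_i'$ has unique annihilator $\omega$, there is $s\in S_i'$ with $as\neq 0$ or $sa\neq 0$, computed in $S_i'$ and hence in $S$. So $a$ is not an annihilator of $S$.

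\emph{$\omega\in S^1aS^1$ for nonzero $a$.} Let $a\neq 0$ lie in $S_i'$. By Lemma~\ref{nilsi} applied to $S_i'$, $\omega\in (S_i')^1a(S_i')^1\subseteq S^1aS^1$. The lemma then gives that $S$ is subdirectly irreducible.

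\emph{Main obstacle.} Products $S_j'S_k'$ with $j\ne k$ vanish by definition, and products inside a single $S_i'$ stay inside $S_i'$. The delicate point is identifying $\omega$ with each component's annihilator.
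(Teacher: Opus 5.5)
Your overall strategy is the right one: verify the three conditions of Lemma~\ref{nilsi}, namely that $S$ is nil, that $\omega$ is its unique annihilator, and that $\omega\in S^1aS^1$ for $a\neq 0$. Your nil, uniqueness, and $S^1aS^1$ steps are correct \emph{once} you know that $\omega$ is the annihilator of every component $S_i'$. The gap is that identification, which you flag but do not prove. ``Reading the definition as requiring'' it adds a hypothesis rather than proving it. Your argument via the ideal generated by $\omega$ is circular, because the claim that this ideal is $\{0,\omega\}$ already assumes $\omega$ is an annihilator. And ``otherwise $\omega$ could not be shared compatibly'' is not an argument.

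The missing idea is that the orthogonality condition $S_j'\cdot S_k'=\{0\}$ forces the identification directly. Suppose $|I|\geq 2$ and let $s\in S$, say $s\in S_j'$. Pick $k\neq j$. Since $\omega\in S_j'\cap S_k'$, we get $\omega s\in S_k'S_j'=\{0\}$ and $s\omega\in S_j'S_k'=\{0\}$. So $\omega$ annihilates all of $S$; it is nonzero, as the notation $\{0,\omega\}$ intends. In particular $\omega$ is a nonzero annihilator of each $S_j'$. Since $S_j'\cong S_j$ has a unique annihilator by Lemma~\ref{nilsi}, that annihilator must be $\omega$.

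If $|I|=1$, the intersection condition is vacuous and does not determine $\omega$. In that case, however, $S=S_1'\cong S_1$ and there is nothing to prove, so you should treat it separately. With these lines replacing your second bullet list, the rest of your proof goes through as written.
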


The following proposition characterizes when $S(W)$ is subdirectly irreducible.

\begin{proposition}\label{swsi}
Let $W$ be a nonempty set of words in $X^+$.
Then the flat semiring $S(W)$ is subdirectly irreducible if and only if $S(W) = S(\bw)$ for some word $\bw\in W$.
In this case, $S(W)$ is necessarily finite and nilpotent.
\end{proposition}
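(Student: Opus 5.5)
The plan is to prove the converse direction first, since it is the simplest, and then obtain the forward direction from the annihilator criterion.

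Suppose $S(W)=S(\bw)$ for some word $\bw\in W$. Then $S(W)$ consists of $0$ together with the nonempty subwords of $\bw$. This set is finite. Any product of more than $\ell(\bw)$ nonzero elements is a word longer than $\bw$, so it cannot be a subword of $\bw$. Hence $S(W)^{\ell(\bw)+1}=\{0\}$, and $S(W)$ is finite and nilpotent. Its annihilators are the maximal elements of $W^{\leq}$ under the subword order, and the only such element is $\bw$. Lemma~\ref{nilpotentsi} now gives that $S(W)$ is subdirectly irreducible. This also settles the final sentence of the statement, once the forward direction is proved.

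For the forward direction, assume $S(W)$ is subdirectly irreducible. First I show that $S(W)$ is nil, so that Lemma~\ref{nilsi} applies. For every nonzero $\bu\in W^{\leq}$, the power $\bu^k$ has length $k\ell(\bu)$. Since $\bu$ is a subword of some word in $W$, and that word has finite length, $\bu^k$ fails to lie in $W^{\leq}$ for $k$ large enough. So $\bu^k=0$ in $S(W)$.

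Lemma~\ref{nilsi} then provides a unique annihilator $\omega$ such that $\omega\in S^1\bu S^1$ for every nonzero $\bu$. The annihilators are exactly the maximal words of $W^{\leq}$ under the subword order, so $\omega=\bw$ is a word. Take any $\bu\in W^{\leq}$. The condition $\bw\in S^1\bu S^1$ means $\bw=\bp\bu\bq$ in $S(W)$ for some $\bp,\bq\in S^1$. Because $\bw\neq 0$, this product is the genuine word product, so $\bu$ is a subword of $\bw$. Thus $W^{\leq}$ is contained in the set of subwords of $\bw$. The reverse inclusion holds because $\bw\in W^{\leq}$. Finally, $\bw$ is maximal in $W^{\leq}$ and every word of $W$ is a subword of $\bw$, so $\bw$ itself belongs to $W$, and therefore $S(W)=S(\bw)$.

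The step I expect to need the most care is this last identification of the maximal word as an element of $W$. If it must be justified purely from maximality, the argument is this: $\bw$ is a subword of some $\bv\in W$, and maximality of $\bw$ in $W^{\leq}$ forces $\bw=\bv$. The rest of the argument is direct once one knows that nonzero products in $S(W)$ are concatenations.
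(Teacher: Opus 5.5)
There is a genuine gap in your proof that $S(W)$ is nil. You argue that $\bu^k$ eventually leaves $W^{\leq}$ because $\bu$ is a subword of some word of $W$ of finite length. But $\bu^k$ only needs to be a subword of \emph{some} word of $W$, not of the particular word that contains $\bu$. The statement allows $W$ to be infinite, and then the claim is false. For $W=\{a^n\mid n\geq 1\}$, every power $a^k$ lies in $W^{\leq}$, so $S(W)$ is not nil and Lemma~\ref{nilsi} cannot be invoked. Your length argument is valid only when the words of $W$ have bounded length. The proposition remains true for such $W$, but the non-nil case must be excluded using subdirect irreducibility, not by a length count.

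The paper handles this as follows. Suppose $\bu^n\in W^{\leq}$ for all $n$. Then the principal ideals $I_n=M(W)\bu^nM(W)$ are nonzero. Every nonzero element of $I_n$ has length at least $n\ell(\bu)$, so $\bigcap_n I_n=\{0\}$ and there is no least nonzero ideal. Hence a subdirectly irreducible $S(W)$ is nil. From there your argument works as written: the unique annihilator $\bw$ is a maximal word, every $\bu$ is a subword of $\bw$, and $\bw\in W$ by maximality.

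You could also bypass nilness entirely. Let $I$ be the least nonzero ideal. For each nonzero $\bu\in I$ you get $I=M(W)\bu M(W)$, so any two nonzero elements of $I$ are subwords of each other. This forces $I=\{0,\bw\}$. Since $I\subseteq M(W)\bu M(W)$ for every nonzero $\bu$, every word of $W^{\leq}$ is a subword of $\bw$, which is exactly what your final step needs. Your converse direction, and the deduction that $S(W)$ is finite and nilpotent, are fine and match the paper.
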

\begin{proof}
Suppose that $S(W) = S(\bw)$ for some word $\bw \in W$. Since $S(\bw)$ is a nilpotent flat semiring containing a unique annihilator $\bw$, it follows from Lemma~\ref{nilpotentsi} that $S(\bw)$ is subdirectly irreducible.

Conversely, suppose that $S(W)$ is subdirectly irreducible.
If $S(W)$ is not nil, then there exists a word $\bu \in W^{\leq}$ such that $\bu^n \in W^{\leq}$ for all $n\geq 1$.
For each $n\geq 1$, let $I_n$ denote the multiplicative ideal of $S(W)$ generated by $\bu^n$.
Then $I_n = M(W) \bu^n M(W)$, and it is easy to see that the intersection $\bigcap_{n \geq 1} I_n = \{0\}$.
This implies that $S(W)$ has no least nonzero ideal,
contradicting the assumption that $S(W)$ is subdirectly irreducible.
Hence $S(W)$ must be nil.
Now by Lemma~\ref{nilsi}, $S(W)$ contains a unique annihilator $\bw$ and $\bw \in M(W) \bu M(W)$ for all $\bu \in W$.
Consequently, every word in $W$ is a subword of $\bw$. Therefore, $S(W) = S(\bw)$.
\end{proof}


It is easy to see that every flat semiring $S(W)$ satisfies the identity
\begin{equation} \label{xxy}
x + xy \approx 0.
\end{equation}
In fact, the identity~\eqref{xxy} characterizes, to some extent, the property that a flat semiring is nil; see the following lemma.

\begin{lemma}\label{xxynil}
Let $S$ be a flat semiring.
If $S$ is nil, then $S$ satisfies the identity~\eqref{xxy}.
Conversely, if $S$ is finite and satisfies the identity~\eqref{xxy}, then $S$ is nilpotent.
\end{lemma}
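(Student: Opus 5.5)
First I will unpack what it means for a flat semiring $S$ to satisfy $x + xy \approx 0$. By the abbreviation \eqref{Ufree}, this is the conjunction of the identities
\[
(x+xy)z \approx z(x+xy) \approx x+xy+z \approx x+xy,
\]
with $z$ a fresh variable. The key observation is that in a flat semiring, for $a,b\in S$ the element $a+ab$ equals $a$ if $a=ab$ and equals $0$ otherwise. So the identity holds if and only if there are no $a,b$ with $a=ab$ and $a\neq 0$, except possibly when $a$ is itself a multiplicative zero and additive maximum. In a flat semiring the only element that is an additive maximum is $0$, since $a+c=0$ for any $c\ne a$; so the exception cannot occur for $a\neq 0$. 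Hence $S$ satisfies \eqref{xxy} if and only if
\[
ab=a \implies a=0 \quad\text{for all } a,b\in S.
\]
I will verify this carefully in both directions, including the degenerate cases $a=0$, $b=0$ and $z$ arbitrary.

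\emph{Nil implies the identity.} Suppose $a=ab$ with $a\neq 0$. Iterating gives $a=ab^n$ for all $n\ge 1$. Since $S$ is nil, $b^n=0$ for some $n$, so $a=0$, a contradiction. Therefore $a+ab$ is always $0$, and all the component identities hold because $0$ is a multiplicative zero and the additive maximum.

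\emph{Finite plus the identity implies nilpotent.} By Remark~\ref{finnil}, it suffices to show that $S$ is nil. Take $b\in S$. Finiteness gives $b^m=b^{m+k}$ for some $m,k\ge1$. Choose $n$ a multiple of $k$ with $n\ge m$; then $e=b^n$ is idempotent. Now $e\cdot e=e$, so the criterion with $a=b'=e$ forces $e=0$. Hence $b$ is nilpotent, so $S$ is nil, and finiteness then gives nilpotency.

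The only delicate step is the first one: translating the abbreviated identity into the elementwise criterion, in particular checking that when $a+ab=a\neq0$ the summand identity $a+ab+z\approx a+ab$ really fails (take $z=0$, or any $z\ne a$). Everything after that is a routine semigroup argument.
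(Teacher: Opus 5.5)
Your proof is correct and follows the paper's argument: the nil direction is the same iteration $a=ab^n$ forcing $a=0$, and the converse likewise extracts from $b^m=b^{m+k}$ a relation $c=cd$ with $c$ a power of $b$ and then invokes Remark~\ref{finnil}. The only differences are cosmetic: the paper uses $c=b^m$, $d=b^k$ directly where you pass to an idempotent power, and you spell out, correctly, the reduction of the abbreviation $x+xy\approx 0$ to ``$a+ab=0$ for all $a,b$'', which the paper leaves implicit.
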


\begin{proof}
Suppose that the flat semiring $S$ is nil with zero $0$.
Assume, for contradiction, that $S$ does not satisfy the identity~\eqref{xxy}.
Then there exist $a, b \in S$ such that $a + ab \neq 0$.
Since $S$ is flat, it follows that $a = ab \neq 0$, and so
\[
0 \neq a = ab = ab^2 = \cdots = ab^n = ab^{n+1}=\cdots.
\]
Since $S$ is nil, there exists $n\geq 1$ such that $b^n = 0$.
Thus $a = ab^n = 0$, a contradiction.
Therefore, $S$ satisfies the identity~\eqref{xxy}.

Conversely, suppose that $S$ is a finite flat semiring satisfying the identity~\eqref{xxy}.
Let $a$ be an arbitrary nonzero element of $S$.
Since $S$ is finite, there exist $n, k\geq 1$ such that $a^n = a^{n+k}$, and so
\[
a^n =a^n + a^n=a^n + a^{n+k}=a^n + a^n a^k = 0.
\]
Here the last equality uses the identity~\eqref{xxy}.
Thus $S$ is nil. By Remark~\ref{finnil}, $S$ is nilpotent.
\end{proof}

\begin{proposition}\label{WiW}
Let $\{W_{i}\}_{i\in I}$ be a family of nonempty subsets of $X^+$. Then
\[
\textstyle \mathsf{V}\left(S\left(\bigcup_{i\in I}W_{i}\right)\right) = \mathsf{V}( S(W_{i}) \mid i\in I).
\]
\end{proposition}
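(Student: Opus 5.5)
We prove the two inclusions separately; write $W=\bigcup_{i\in I}W_i$.

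For the inclusion $\supseteq$, it suffices to show that each $S(W_i)$ lies in $\mathsf{V}(S(W))$. We will realize $S(W_i)$ as a subalgebra of $S(W)$. Since $W_i\subseteq W$, we have $W_i^{\leq}\subseteq W^{\leq}$, so $W_i^{\leq}\cup\{0\}$ is a subset of $S(W)$, but it need not be closed under the multiplication of $S(W)$. Indeed, for $\bu,\bv\in W_i^{\leq}$ it can happen that $\bu\bv\in W^{\leq}\setminus W_i^{\leq}$: the product is nonzero in $S(W)$ but zero in $S(W_i)$. So $S(W_i)$ is not a subalgebra, and we instead exhibit it as a homomorphic image of a subalgebra.

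Let $T$ be the subsemiring of $S(W)$ generated by $W_i^{\leq}$. It consists of $0$ together with those words of $W^{\leq}$ that are products of words of $W_i^{\leq}$; since letters occurring in $W_i^{\leq}$ already lie in $W_i^{\leq}$, these are exactly the words of $W^{\leq}$ over the alphabet $c(W_i)$. Addition stays inside $T$ because $S(W)$ is flat. Put $J=T\setminus W_i^{\leq}$, which contains $0$. Then $J$ is a multiplicative ideal of $T$: a word having a factor outside $W_i^{\leq}$ is itself outside $W_i^{\leq}$, since $W_i^{\leq}$ is closed under taking subwords. It is also an additive filter, since in a flat semiring every set containing $0$ is upward closed. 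Hence the ideal quotient $T/J$ exists, and by the definition of the multiplication it is isomorphic to $S(W_i)$. Thus $S(W_i)\in\mathsf{HS}(S(W))$.

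For the inclusion $\subseteq$, we use Proposition~\ref{siflat}. For each $i$, the set $J_i=S(W)\setminus W_i^{\leq}$ is an ideal of $S(W)$, by the same subword-closure argument as above, and $S(W)/J_i\cong S(W_i)$. Moreover
\[
\bigcap_{i\in I} J_i=\{0\}\cup\Bigl(W^{\leq}\setminus\bigcup_{i\in I} W_i^{\leq}\Bigr)=\{0\},
\]
because every word of $W^{\leq}$ is a subword of some word of some $W_i$. By Proposition~\ref{siflat}, $S(W)$ is a subdirect product of the $S(W_i)$, so $S(W)$ lies in $\mathsf{V}(S(W_i)\mid i\in I)$.

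One point in applying Proposition~\ref{siflat} must be checked: each $J_i$ must be a proper ideal. This holds because $W_i$ is nonempty, so $W_i^{\leq}\neq\emptyset$ and $J_i\neq S(W)$. The other care point is the closure issue in the $\supseteq$ direction, which is why that direction goes through a quotient of the subalgebra $T$ rather than through a direct subalgebra embedding.
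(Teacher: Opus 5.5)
Your proof is correct, and the $\subseteq$ direction is exactly the paper's argument: the ideals $J_i=S(W)\setminus W_i^{\leq}$ satisfy $S(W)/J_i\cong S(W_i)$ and $\bigcap_{i\in I}J_i=\{0\}$, and these facts are fed into Proposition~\ref{siflat}. The paper gets $\supseteq$ from the same decomposition, because $S(W_i)\cong S(W)/J_i$ is already a homomorphic image of $S(W)$ itself, so your detour through the subalgebra $T$ is valid but unnecessary.
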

\begin{proof}
Let $W$ denote the union $\bigcup_{i\in I}W_{i}$.
For each $i \in I$, define $J_i = S(W) \setminus W_i^{\leq}$.
It is straightforward to verify that $J_i$ is a multiplicative ideal of $S(W)$.
Moreover, the ideal quotient $S(W)/J_i$ is isomorphic to $S(W_i)$, and the intersection $\bigcap_{i \in I} J_i = \{0\}$.
By Proposition~\ref{siflat}, we deduce that $S(W)$ is isomorphic to a subdirect product of the family $(S(W_i))_{i \in I}$.
Consequently,
\[
\mathsf{V}(S(W)) = \mathsf{V}\bigl( S(W_i) \mid i \in I \bigr),
\]
as required.
\end{proof}

\begin{corollary}\label{coro26071601}
Let $W$ and $W'$ be nonempty subsets of $X^+$.
If $W$ is a subset of $W'$, then ${\mathsf V}(S(W))$ is a subvariety of ${\mathsf V}(S(W'))$.
\end{corollary}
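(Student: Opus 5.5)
This follows directly from Proposition~\ref{WiW}. Since $W \subseteq W'$, we may write $W' = W \cup W'$, which is a union of the two nonempty sets $W_1 = W$ and $W_2 = W'$. Applying Proposition~\ref{WiW} with the index set $I=\{1,2\}$ gives
\[
\mathsf{V}(S(W')) = \mathsf{V}(S(W \cup W')) = \mathsf{V}\bigl(S(W), S(W')\bigr).
\]
The right-hand side is the variety generated by a class containing $S(W)$, so it contains $S(W)$. Consequently $\mathsf{V}(S(W)) \subseteq \mathsf{V}(S(W'))$, because $\mathsf{V}(S(W))$ is the smallest variety containing $S(W)$.

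A direct argument is also available and avoids Proposition~\ref{WiW}. The set $J = S(W') \setminus W^{\leq}$ is a multiplicative ideal of $S(W')$. Indeed, if a product $\mathbf{u}\mathbf{v}$ lies in $W^{\leq}$, then its factors $\mathbf{u}$ and $\mathbf{v}$ are subwords of a word in $W$, so they also lie in $W^{\leq}$. The ideal quotient $S(W')/J$ is isomorphic to $S(W)$, so $S(W)$ is a homomorphic image of $S(W')$ and therefore lies in $\mathsf{V}(S(W'))$.

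Neither argument has a real obstacle; the only thing to check is the set-theoretic identity $W \cup W' = W'$.
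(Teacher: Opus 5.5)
Your proof is correct and matches the paper, which records this corollary as an immediate consequence of Proposition~\ref{WiW}; your specialization $W_1=W$, $W_2=W'$ is exactly the intended one. Your alternative direct argument is also sound: it is the single-ideal half of the paper's proof of Proposition~\ref{WiW}, using $J=S(W')\setminus W^{\leq}$ to exhibit $S(W)$ as a homomorphic image of $S(W')$.
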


\begin{corollary}\label{wwi}
Let $W$ be a nonempty set of words in $X^+$.
Then
\[
{\mathsf V}(S(W))={\mathsf V}(S(\bw) \mid \bw\in W) = {\mathsf V}\left(S(\bw) \mid \bw\in W^{\leq}\right).
\]
In particular, $S(\bw)\in \mathsf{V}(S(W))$ for each $\bw\in W^{\leq}$.
\end{corollary}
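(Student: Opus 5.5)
The first equality is immediate from Proposition~\ref{WiW}. Write $W=\bigcup_{\bw\in W}\{\bw\}$ and take the family $W_{\bw}=\{\bw\}$ for $\bw\in W$. Since $S(\{\bw\})=S(\bw)$ by our notational convention, Proposition~\ref{WiW} gives
\[
\mathsf V(S(W))=\mathsf V(S(\bw)\mid \bw\in W).
\]

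For the second equality, the plan is to show that the two word sets $W$ and $W^{\leq}$ generate the same flat semiring. Indeed $(W^{\leq})^{\leq}=W^{\leq}$, since a nonempty subword of a nonempty subword of a word in $W$ is again a nonempty subword of that word. The multiplication on $S(W)$ depends only on $W^{\leq}$, so $S(W^{\leq})=S(W)$ as algebras. Applying the first equality to the set $W^{\leq}$ in place of $W$ then yields
\[
\mathsf V(S(W))=\mathsf V(S(W^{\leq}))=\mathsf V(S(\bw)\mid \bw\in W^{\leq}).
\]
As an alternative route, one could note $W\subseteq W^{\leq}$, so Corollary~\ref{coro26071601} gives one inclusion. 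For the other inclusion, each $\bw\in W^{\leq}$ is a subword of some $\bv\in W$, and $\{\bw\}\subseteq \{\bv\}^{\leq}$ together with $S(\{\bv\}^{\leq})=S(\bv)$ and Corollary~\ref{coro26071601} gives $S(\bw)\in\mathsf V(S(\bv))$. The direct identification $S(W^{\leq})=S(W)$ is cleaner, so I would use that.

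The final claim follows at once: $S(\bw)$ is one of the generators in the last variety, hence lies in $\mathsf V(S(W))$ for every $\bw\in W^{\leq}$. No step here is a real obstacle. The only point needing care is checking that $S(W)$ depends only on $W^{\leq}$, which is immediate from the definition of the multiplication.
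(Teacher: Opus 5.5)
Your proposal is correct and matches the paper's proof: the first equality comes from Proposition~\ref{WiW} applied to the singleton family $\{\bw\}$, $\bw\in W$, and the second from the same proposition together with the elementary identification $S(W)=S(W^{\leq})$.
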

\begin{proof}
The equality ${\mathsf V}(S(W))={\mathsf V}(S(\bw) \mid \bw\in W)$ follows directly from Proposition~\ref{WiW}.
The equality ${\mathsf V}(S(W))={\mathsf V}\left(S(\bw) \mid \bw\in W^{\leq}\right)$
follows from Proposition~\ref{WiW} together with the elementary fact that $S(W)=S(W^{\leq})$.
\end{proof}

Two words are \emph{similar}
if one can be obtained from the other by renaming the variables;
that is, they differ only in the names of the variables \cite{carpi2002}.
For instance, $xyx$ and $aba$ are similar.
For any two words $\mathbf{u}$ and $\mathbf{v}$ in $X^+$,
the flat semirings $S(\mathbf{u})$ and $S(\mathbf{v})$ are isomorphic if and only if
$\mathbf{u}$ and $\mathbf{v}$ are similar.

For a nonempty set $W$ of words in $X^+$,
let $\overline{W}$ be a chosen set of representatives of the similarity classes of $W$;
that is, $\overline{W}$ is a subset of $W$, and every word in $W$ is similar to a unique word in $\overline{W}$.
Then $\overline{W}$ is finite if and only if the words in $W$ have bounded length.

\begin{proposition}\label{WW'}
Let $W$ be a nonempty set of words in $X^+$.
Then
\[
\mathsf{V}(S(W)) = \mathsf{V}(S(\overline{W})).
\]
\end{proposition}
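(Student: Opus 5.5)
We reduce the statement to Corollary~\ref{wwi} together with the observation recorded just before it, that similar words yield isomorphic flat semirings. By Corollary~\ref{wwi} applied to $W$, we have
\[
\mathsf{V}(S(W)) = \mathsf{V}\bigl(S(\bw) \mid \bw \in W\bigr),
\]
and applied to $\overline{W}$ (which is nonempty because $W$ is),
\[
\mathsf{V}(S(\overline{W})) = \mathsf{V}\bigl(S(\bw) \mid \bw \in \overline{W}\bigr).
\]
It therefore suffices to show that the two generating families on the right generate the same variety.

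The inclusion $\mathsf{V}(S(\overline{W})) \subseteq \mathsf{V}(S(W))$ follows from Corollary~\ref{coro26071601}, since $\overline{W} \subseteq W$. Equivalently, the second family is a subfamily of the first.

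For the reverse inclusion, take any $\bw \in W$. By the definition of $\overline{W}$, there is a word $\bw' \in \overline{W}$ similar to $\bw$, so $S(\bw) \cong S(\bw')$. Varieties are closed under isomorphic copies, hence $S(\bw) \in \mathsf{V}(S(\bw') \mid \bw' \in \overline{W}) = \mathsf{V}(S(\overline{W}))$. Since every generator of $\mathsf{V}(S(W))$ lies in $\mathsf{V}(S(\overline{W}))$, we get $\mathsf{V}(S(W)) \subseteq \mathsf{V}(S(\overline{W}))$.

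The only step with any content is the isomorphism $S(\bw) \cong S(\bw')$ for similar words. The paper states this without proof; if needed, we would justify it as follows. A renaming of variables is a bijection of letters, so it extends to an automorphism of $X^+$. This automorphism maps $\{\bw\}^{\leq}$ bijectively onto $\{\bw'\}^{\leq}$ and preserves whether a product is a subword. Extending it by $0 \mapsto 0$ therefore gives a semigroup isomorphism, and since the addition on a flat semiring is determined by the underlying set, it is also a semiring isomorphism.
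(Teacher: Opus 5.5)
Your proof is correct and follows the paper's argument. The inclusion $\mathsf{V}(S(\overline{W}))\subseteq\mathsf{V}(S(W))$ comes from Corollary~\ref{coro26071601}, and the reverse inclusion from Corollary~\ref{wwi} together with $S(\bw)\cong S(\bw')$ for similar words; your justification of that isomorphism via an extended renaming automorphism of $X^+$ fills in a step the paper leaves implicit.
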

\begin{proof}
Since $\overline{W} \subseteq W$, it follows from Corollary~\ref{coro26071601} that
$\mathsf{V}(S(\overline{W}))$ is a subvariety of $\mathsf{V}(S(W))$.
Conversely, by the hypothesis and Corollary~\ref{wwi}, we have
\[
\mathsf{V}(S(W)) = \mathsf{V}\bigl( S(\mathbf{w}) \mid \mathbf{w} \in W \bigr)
\subseteq \mathsf{V}\bigl( S(\mathbf{w}) \mid \mathbf{w} \in \overline{W} \bigr)
= \mathsf{V}(S(\overline{W})).
\]
Thus $\mathsf{V}(S(W))$ is a subvariety of $\mathsf{V}(S(\overline{W}))$,
and so the two varieties are equal.
\end{proof}

Using properties of word similarities, the following proposition characterizes nilpotency of $S(W)$.

\begin{proposition}\label{nilp}
Let $W$ be a nonempty set of words in $X^+$.
Then the following statements are equivalent:
\begin{enumerate}[$(\rm i)$]
\item The flat semiring $S(W)$ is nilpotent.
\item The words in $W$ have bounded length.
\item The variety $\mathsf{V}(S(W))$ is finitely generated.
More precisely, $\mathsf{V}(S(W))$ is generated by the finite flat semiring $S(\overline{W})$.
\end{enumerate}
\end{proposition}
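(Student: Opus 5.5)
I plan to prove the cycle (i)$\Rightarrow$(ii)$\Rightarrow$(iii)$\Rightarrow$(i).

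For (i)$\Rightarrow$(ii): suppose $S(W)^{k}=\{0\}$. Let $\bw=x_1\cdots x_m$ be a word in $W$. Then each letter $x_i$ lies in $W^{\leq}$, and the product $x_1\cdot x_2\cdots x_m$ in $S(W)$ equals $\bw\neq 0$, since every prefix is a subword of $\bw$. Hence $m<k$, so every word in $W$ has length at most $k-1$.

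For (ii)$\Rightarrow$(iii): if all words in $W$ have length at most $n$, then there are only finitely many similarity classes of words of length at most $n$. Hence $\overline{W}$ is finite, as already noted before Proposition~\ref{WW'}. The semiring $S(\overline{W})$ has underlying set $\overline{W}^{\leq}\cup\{0\}$, which is finite because a finite set of words has finitely many subwords. By Proposition~\ref{WW'}, $\mathsf{V}(S(W))=\mathsf{V}(S(\overline{W}))$, so the variety is finitely generated by this finite flat semiring.

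For (iii)$\Rightarrow$(i), which I expect to be the main step: suppose $\mathsf{V}(S(W))=\mathsf{V}(A)$ for some finite algebra $A$. I will avoid working with $A$ directly and use an identity instead.
\begin{enumerate}[$(1)$]
\item A finite algebra $A$ satisfies some identity $x^{n}\approx x^{n+p}$ with $n,p\geq 1$: in the multiplicative reduct, the pair $(n,p)$ can be chosen uniformly for all elements by taking $n=|A|$ and $p=|A|!$.
\item Since $S(W)\in\mathsf{V}(A)$, the semiring $S(W)$ satisfies this identity as well.
\item $S(W)$ also satisfies the identity~\eqref{xxy}. Arguing as in the proof of Lemma~\ref{xxynil}, we get $a^{n}=a^{n}+a^{n}a^{p}=0$, so $S(W)$ is $n$-nil. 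In particular $W$ is $x^{n}$-free.
\end{enumerate}

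Nil alone does not give nilpotency in general, since Remark~\ref{finnil} needs finiteness, and $S(W)$ may be infinite. So I will instead use local finiteness of $\mathsf{V}(A)$: a finitely generated variety is locally finite. The argument runs as follows.
\begin{enumerate}[$(1)$]
\item Suppose the words in $W$ have unbounded length.
\item Each word in $W$ of length $m$ is similar to a word over at most $m$ letters, but that alone gives nothing; I need a fixed alphabet.
\item Consider instead the subalgebra generated by the single letters. The trouble is that the alphabet $X$ may be infinite.
\item The cleaner route is to show that $\mathsf{V}(A)$ satisfies $x_1\cdots x_k\approx 0$ for some $k$. For this I apply local finiteness to the free algebra $F$ on $N=|A|$ generators over $\mathsf{V}(A)$, which is finite.
\item Map the generators onto letters: any word $\bw\in W$ of length $m$ can be renamed, via similarity and Corollary~\ref{wwi}, so that $S(\bw)$ lies in $\mathsf{V}(A)$. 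But $S(\bw)$ may need up to $m$ generators.
\end{enumerate}

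If this generator-counting gets delicate, my fallback is the direct semigroup argument. Since $S(\bw)$ lies in $\mathsf{V}(A)$, it satisfies every identity of $A$. A finite semigroup $A$ whose reduct satisfies $x^{n}\approx x^{n+p}$ and lies in a variety of nil flat semirings is itself nilpotent of some class $k$, by Lemma~\ref{xxynil} applied to the subdirectly irreducible flat factors from Lemma~\ref{lem24121301}. Then $A$ satisfies $x_1\cdots x_k\approx y_1\cdots y_k$ together with $x_1\cdots x_k\, z\approx x_1\cdots x_k$. These identities transfer to $S(W)$, and there they force every product of $k$ elements to be $0$. Thus $S(W)^{k}=\{0\}$, which gives (i).
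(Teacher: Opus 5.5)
Your fallback argument for (iii)$\Rightarrow$(i) is correct and is essentially the paper's proof. The paper uses Lemma~\ref{finsubf} to replace the finite generator by a finite flat semiring and then applies Lemma~\ref{xxynil} via identity~\eqref{xxy}; you instead apply that lemma to the finitely many finite subdirectly irreducible quotients of $A$ (flat by Lemma~\ref{lem24121301}), take the largest nilpotency class, and transfer $x_1\cdots x_k z\approx x_1\cdots x_k$ to $S(W)$ exactly as the paper does. The $x^{n}\approx x^{n+p}$ step and the abandoned local-finiteness detour are unnecessary and can be dropped, and your (i)$\Rightarrow$(ii) and (ii)$\Rightarrow$(iii) match the paper.
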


\begin{proof}
$(\rm i)\Rightarrow(\rm ii)$.
Assume that $S(W)$ is nilpotent. Then it is $k$-nilpotent for $k\geq 2$.
This implies that every word in $W$ has length less than $k$.
So the words in $W$ have bounded length.

$(\rm ii)\Rightarrow(\rm iii)$. Suppose that every word in $W$ has length less than $k$ for some $k\geq 1$. Then $\overline W$ is finite.
By Proposition~\ref{WW'}, $\mathsf{V}(S(W))$ is generated by the finite flat semiring $S(\overline{W})$.

$(\rm iii)\Rightarrow(\rm i)$.
Assume that $\mathsf{V}(S(W))$ is finitely generated.
By Lemma~\ref{finsubf}, the variety $\mathsf{V}(S(W))$ is generated by some finite flat semiring $S$,
and so $S(W)$ and $S$ satisfy the same identities.
Since $S(W)$ satisfies identity~\eqref{xxy},
so does $S$. By Lemma~\ref{xxynil},  $S$ is nilpotent,
and therefore satisfies the identity $x_1 \cdots x_k \approx 0$ for some $k\geq 1$.
Consequently, $S(W)$ also satisfies this identity, which means $S(W)$ is nilpotent.
Then, by Proposition~\ref{WW'}, $\mathsf{V}(S(W))$ is generated by the finite flat semiring $S(\overline{W})$.
\end{proof}

For each integer $k\geq 1$, let $W_k$ denote the set of all words in $X^+$ of length $k$.
Then $W_k^{\leq}=\bigcup_{1\leq i\leq k} W_i$,
which consists of all words in $X^+$ of length at most $k$;
the flat semiring $S(W_k)$ is exactly $(k+1)$-nilpotent and exactly $(k+1)$-nil.
Let $\mathcal{W}_k$ denote the variety generated by $S(W_k)$.
By proposition~\ref{nilp}, $\mathcal{W}_k$ is finitely generated.
Since
\[
W_1^{\leq} \subseteq W_2^{\leq} \subseteq \cdots \subseteq W_k^{\leq} \subseteq W_{k+1}^{\leq}\subseteq \cdots,
\]
it follows from Corollary~\ref{wwi} that
\begin{equation}\label{chain071501}
\mathcal{W}_1 \leq \mathcal{W}_2 \leq \cdots \leq
\mathcal{W}_k \leq \mathcal{W}_{k+1} \leq \cdots.
\end{equation}
The identity $x^{k+1} \approx 0$ is satisfied by $S(W_k)$,
but does not hold in $S(W_{k+1})$. So the ascending chain \eqref{chain071501} is strict.
We therefore have the following proposition.

\begin{proposition}
For each $k\geq 1$, the variety $\mathcal{W}_k$ is finitely generated.
Moreover, these varieties form an infinite strictly ascending chain:
\begin{equation}\label{chain}
\mathcal{W}_1 < \mathcal{W}_2 < \cdots < \mathcal{W}_k < \mathcal{W}_{k+1} <\cdots.
\end{equation}
The join of all these varieties equals $\mathsf{V}\bigl(S(X^+)\bigr)$.
\end{proposition}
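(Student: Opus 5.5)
The proposition has three parts: each $\mathcal{W}_k$ is finitely generated, the chain \eqref{chain} is strict, and the join of the $\mathcal{W}_k$ is $\mathsf{V}(S(X^+))$. The first two were essentially established just before the statement, so I will only recall them briefly and spend most of the effort on the third.

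\emph{Finite generation.} Since every word in $W_k$ has length $k$, Proposition~\ref{nilp}, implication (ii)$\Rightarrow$(iii), shows that $\mathcal{W}_k$ is generated by the finite flat semiring $S(\overline{W_k})$. Concretely, $\overline{W_k}$ has at most $k^k$ elements, since any word of length $k$ is similar to a word over $\{x_1,\dots,x_k\}$.

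\emph{Strictness.} The inclusions $\mathcal{W}_k\le\mathcal{W}_{k+1}$ come from $W_k^{\leq}\subseteq W_{k+1}^{\leq}$ together with Corollary~\ref{wwi}. Equivalently, one can use Corollary~\ref{coro26071601} applied to $W_k\subseteq W_{k+1}^{\leq}$ and the equality $S(W_{k+1})=S(W_{k+1}^{\leq})$. Each inclusion is proper because of the identity $x^{k+1}\approx 0$. This is the $x^{k+1}$-free identity in the sense of \eqref{Ufree}. It holds in $S(W_k)$: a word lies in $W_k^{\leq}$ only if its length is at most $k$, and no image $\varphi(x^{k+1})$ has length at most $k$. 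So $W_k$ is $x^{k+1}$-free. It fails in $S(W_{k+1})$, because the assignment $x\mapsto x$ gives $x^{k+1}\in W_{k+1}^{\leq}$, a nonzero element. For instance, the clause $\bu+z\approx\bu$ with $z\mapsto x$ evaluates to $x^{k+1}+x=0\neq x^{k+1}$.

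\emph{The join.} Note that $X^+=\bigcup_{k\ge1}W_k$. Proposition~\ref{WiW} applied to the family $(W_k)_{k\ge1}$ then gives
\[
\mathsf{V}\bigl(S(X^+)\bigr)=\mathsf{V}\bigl(S(W_k)\mid k\ge1\bigr).
\]
The right-hand side is by definition the smallest variety containing every $S(W_k)$, which is exactly the join $\bigvee_{k}\mathcal{W}_k$. The only step needing any care is checking that Proposition~\ref{WiW} applies to an infinite index set. Its proof uses Proposition~\ref{siflat} with the ideals $J_k=S(X^+)\setminus W_k^{\leq}$, so it suffices to check that $\bigcap_k J_k=\{0\}$. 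This holds because every nonzero element of $S(X^+)$ is a word of some finite length $m$, and therefore lies in $W_m^{\leq}$, i.e.\ outside $J_m$.
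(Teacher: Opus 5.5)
Your proposal is correct and follows the same route as the paper. It gets finite generation from Proposition~\ref{nilp}, strictness from the identity $x^{k+1}\approx 0$ (which the paper checks in the paragraph just before the statement), and the join by applying Proposition~\ref{WiW} to $X^+=\bigcup_{k\ge 1}W_k$; your extra check that $\bigcap_k J_k=\{0\}$ is sound but unnecessary, since Proposition~\ref{WiW} is already stated for arbitrary index families.
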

\begin{proof}
Since $X^+=\bigcup_{k \geq 1} W_k$, Proposition~\ref{WiW} gives
\[
\mathsf{V}(S(X^+)) = \mathsf{V}(S(W_k) \mid k\geq 1),
\]
which is the join of the varieties $\mathsf{V}(S(W_k))$ for $k \geq 1$.
Since $\mathsf{V}(S(W_k))=\mathcal{W}_k$ for every $k\geq 1$,
we obtain that $\mathsf{V}\bigl(S(X^+)\bigr)$ is the join of $(\mathcal{W}_k)_{k\geq 1}$.
\end{proof}

\begin{proposition}\label{knsw}
Let $W$ be a nonempty set of words in $X^+$, and let $k\geq 1$ be an integer.
If every word in $W$ has length at most $k$,
then $\mathsf{V}(S(W))$ is a finitely generated subvariety of $\mathcal{W}_k$.
\end{proposition}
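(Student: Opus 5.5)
The argument combines Proposition~\ref{nilp} with Corollary~\ref{coro26071601}; the only point needing care is the mismatch between ``length at most $k$'' and membership in $W_k^{\leq}$.

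\textbf{Finite generation.} Since every word in $W$ has length at most $k$, the words in $W$ have bounded length. By the implication (ii)$\Rightarrow$(iii) of Proposition~\ref{nilp}, the variety $\mathsf{V}(S(W))$ is generated by the finite flat semiring $S(\overline{W})$, so it is finitely generated.

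\textbf{Containment in $\mathcal{W}_k$.} Every word of length at most $k$ lies in $W_k^{\leq}$, so $W \subseteq W_k^{\leq}$. One cannot simply say $W \subseteq W_k$, because $W$ may contain shorter words. Corollary~\ref{coro26071601}, applied with $W' = W_k^{\leq}$, gives
\[
\mathsf{V}(S(W)) \subseteq \mathsf{V}\bigl(S(W_k^{\leq})\bigr).
\]
From the definition of $S(\cdot)$ we have $(W_k^{\leq})^{\leq} = W_k^{\leq}$, hence $S(W_k^{\leq}) = S(W_k)$. This is the elementary fact $S(W) = S(W^{\leq})$ already used in the proof of Corollary~\ref{wwi}.

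\textbf{Alternative route.} One can instead argue word by word. Each $\bw \in W$ lies in $W_k^{\leq}$, so Corollary~\ref{wwi} gives $S(\bw) \in \mathcal{W}_k$. Corollary~\ref{wwi} also gives $\mathsf{V}(S(W)) = \mathsf{V}(S(\bw) \mid \bw \in W)$, which is therefore contained in $\mathcal{W}_k$.

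Either way, $\mathsf{V}(S(W)) \subseteq \mathsf{V}(S(W_k)) = \mathcal{W}_k$, which is what the statement claims.
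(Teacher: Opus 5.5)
Your proof is correct and follows the paper's argument: finite generation comes from Proposition~\ref{nilp}, and containment in $\mathcal{W}_k$ comes from $W\subseteq W_k^{\leq}$ via Corollary~\ref{wwi}, which is exactly your ``alternative route''. Your main route through Corollary~\ref{coro26071601} together with $S(W_k^{\leq})=S(W_k)$ is just an equivalent repackaging, since both arguments rest on Proposition~\ref{WiW}.
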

\begin{proof}
Suppose that every word in $W$ has length at most $k$.
By Proposition~\ref{nilp}, $\mathsf{V}(S(W))$ is finitely generated.
Since $W$ is a subset of $W_k^{\leq}$,
it follows from Corollary~\ref{wwi} that $\mathsf{V}(S(W))$ is a subvariety of $\mathcal{W}_k$.
\end{proof}

We shall show in later sections that $\mathcal{W}_k$ is finitely based if and only if $k\leq 3$.
In the next section, we prove the first half of this result by establishing that $\mathcal{W}_3$ is a Cross variety.

Proposition~\ref{WW'} allows us to assume, when studying the variety $\mathsf{V}(S(W))$,
that no two distinct words in $W$ are similar.
This assumption often simplifies the analysis considerably.
With this convention, we may take, for example,
$W_3 = \{a^3, a^2b, ab^2, aba, abc\}$.
Consequently, $\mathcal{W}_3$ is generated by the five flat semirings
$S(a^3), S(a^2b), S(ab^2), S(aba)$, and $S(abc)$.


\section{The finite basis problem for $\mathcal{W}_3$}\label{sec:w3}
In this section,
we show that the flat semiring $S(W)$ is finitely based
if every word in $W$ has length at most $3$ (equivalently, if $S(W)$ is $4$-nilpotent).
To this end, we establish a stronger result: the variety $\mathcal{W}_3$ is a Cross variety.
In doing so, we obtain a complete description of the finite subdirectly irreducible members of $\mathcal{W}_3$.

Recall that an ai-semiring variety is a \emph{Cross variety} if it is
finitely generated, finitely based, and has only finitely many subvarieties.
From \cite[Proposition 1.4.35]{sapir14},
every subvariety of a Cross variety is a Cross variety.
Consequently, every algebra in a Cross variety is finitely based.

We shall also need the following two classes of graph semirings from \cite{gr}.
The \emph{path graph semiring} $S_{\bp_n}$ ($n \geq 2$) over a path $\bp_n = a_1 \to a_2 \to \cdots \to a_n$
is a flat semiring with underlying set $\{0, \omega, a_1, \ldots, a_n\}$,
where multiplication is defined by $a_k a_{k+1} = \omega$ for $1 \leq k \leq n-1$ and all other products are $0$.
The \emph{cycle graph semiring} $S_{\bc_m}$ ($m \geq 1$) over a cycle
$\bc_m = a_1 \to a_2 \to \cdots \to a_m \to a_1$
is a flat semiring with underlying set $\{0, \omega, a_1, \ldots, a_m\}$,
where multiplication is defined by $a_ma_1=\omega$ and $a_k a_{k+1} = \omega$ for $1 \leq k \leq m-1$ and all other products are $0$.
Each $S_{\mathbf{p}_{n}}$ is a subalgebra of both $S_{\mathbf{p}_{n+1}}$ and $S_{\mathbf{c}_{n+1}}$.
Moreover, $S_{\bp_2} \cong S(ab)$, $S_{\bc_1} \cong S(a^2)$, and $S_{\bc_2} \cong S_c(ab)$.

The following lemma provides a complete characterization of finite nontrivial subdirectly irreducible $3$-nilpotent flat semirings in terms of path and cycle graph semirings.
\begin{lemma}[{\cite[Proposition 2.4]{gr}}]\label{graphsemiring}
Every path graph semiring and cycle graph semiring is a subdirectly irreducible $3$-nilpotent flat semiring.
Conversely, every finite nontrivial subdirectly irreducible $3$-nilpotent flat semiring
is isomorphic to a $\{0,\omega\}$-direct union of flat semirings,
each of which is either a path graph semiring or a cycle graph semiring.
\end{lemma}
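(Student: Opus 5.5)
The first half of Lemma~\ref{graphsemiring} is a direct verification, and I would do it first. In $S_{\bp_n}$ or $S_{\bc_m}$ the only possibly nonzero products are $a_ka_{k+1}=\omega$ (and $a_ma_1=\omega$ in the cycle case); every product involving $\omega$ or $0$ is $0$. Hence $S^3=\{0\}$ and $S^2=\{0,\omega\}$, so the semiring is $3$-nilpotent. It is also $0$-cancellative. If $a_ia_j=a_ia_k=\omega$, then $j=k$, because in a directed path or cycle each vertex has at most one out-neighbour. The symmetric statement holds for in-neighbours. So the semiring is flat. Finally, $\omega$ is an annihilator. Each $a_k$ is not an annihilator, since it has an out-neighbour or an in-neighbour (for the path with $n\ge 2$, every vertex has at least one of these). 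So $\omega$ is the unique annihilator, and Lemma~\ref{nilpotentsi} gives subdirect irreducibility.

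For the converse, let $S$ be a finite nontrivial subdirectly irreducible $3$-nilpotent flat semiring. By Lemma~\ref{nilpotentsi} it has a unique annihilator $\omega$. If $S^2=\{0\}$, every nonzero element would be an annihilator, so $S=\{0,\omega\}$. This is the degenerate case, and I would either treat it as an empty direct union or exclude it by convention. Otherwise $S^2\ne\{0\}$, and Lemma~\ref{sk0w} with $k=2$ gives $S^2=\{0,\omega\}$. Put $A=S\setminus\{0,\omega\}$ and form the directed graph $G$ on $A$ with an edge $a\to b$ whenever $ab=\omega$. Products involving $\omega$ vanish because $\omega$ annihilates. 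So the multiplication of $S$ is determined entirely by $G$.

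Next I would restrict the shape of $G$. By $0$-cancellativity, $ab=ac=\omega$ forces $b=c$, so every vertex has out-degree at most $1$, and likewise in-degree at most $1$. Every $a\in A$ is not an annihilator, so every vertex has total degree at least $1$. A finite digraph with in- and out-degrees at most $1$ and no isolated vertices is a disjoint union of directed paths with at least $2$ vertices and directed cycles, where loops are cycles of length $1$. For each component $C$, set $S_C=C\cup\{0,\omega\}$. This is a subsemiring, since it is closed under multiplication and under flat addition. It is isomorphic to $S_{\bp_n}$ or $S_{\bc_m}$. Distinct components satisfy $S_C\cap S_D=\{0,\omega\}$ and $S_C\cdot S_D=\{0\}$, because there are no edges between components. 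The union of the $S_C$ is $S$. This is exactly the definition of a $\{0,\omega\}$-direct union.

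The main obstacle I expect is bookkeeping rather than depth. I must make sure $\omega$ never lies in $A$ as a vertex, since a product $\omega\cdot s$ is $0$, not $\omega$. I also need the degree argument to respect loops, where $aa=\omega$ gives a $1$-cycle, and to respect $2$-cycles. And the trivial case $S^2=\{0\}$ has to be handled.
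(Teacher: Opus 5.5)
Your proof is correct and complete. The paper gives no proof of its own here; it only cites \cite[Proposition 2.4]{gr}. Your digraph argument is the natural one behind the path/cycle terminology: $S^2=\{0,\omega\}$ by Lemma~\ref{sk0w}, edges $a\to b$ whenever $ab=\omega$ on $S\setminus\{0,\omega\}$, in- and out-degree at most one by $0$-cancellativity, no isolated vertices by uniqueness of the annihilator, and the finite components then give the $\{0,\omega\}$-direct union.

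Your caveat about $S=\{0,\omega\}$ is also right. As literally stated, the converse needs $S^2\neq\{0\}$ or an empty-union convention. The paper only applies the lemma in Case~2 of Proposition~\ref{vfsi}, and treats $S^2=\{0\}$ separately as $S(a)$ in Case~1.
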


For ai-semiring varieties $\mathcal{V}_1$ and $\mathcal{V}_2$,
we write $\mathcal{V}_1 < \mathcal{V}_2$ if $\mathcal{V}_1$ is a proper subvariety of $\mathcal{V}_2$.
We shall need the following three technical lemmas concerning $\{0,\omega\}$-direct unions of graph semirings and related flat semirings.

\begin{lemma}[{\cite[Lemma 3.1]{gr}}]\label{scnsln1}
Let $k\geq 1$ and $m,n\geq 2$ be integers. Then
\begin{enumerate}[$(\rm i)$]
\item \label{eq250602011}
$\mathsf{V}(S_{\bp_{n}})<\mathsf{V}(S_{\bp_{n}}\circ S_{\bp_n})
=\mathsf{V}\left(\bigcup_{1\leq i\leq m}^{\omega}S_{\bp_{n}}\right)$;

\item \label{eq250602012}
$\mathsf{V}(S_{\bp_{n}}\circ S_{\bp_n})<\mathsf{V}(S_{\bp_{n+1}})
=\mathsf{V}\left(S_{\bp_{n+1}}\circ \bigcup_{1\leq i\leq m}^{\omega}S_{\bp_{n}}\right)$;

\item\label{eq25060202}
$\mathsf{V}(S_{\bc_1})<\mathsf{V}(S_{\bc_1}\circ S_{\bc_1})
=\mathsf{V}\left(\bigcup_{1\leq i\leq m}^{\omega}S_{\bc_1}\right)$;

\item \label{eq25060203}
$\mathsf{V}\left(\bigcup_{1\leq i\leq m}^{\omega}S_{\bc_n}\right)
=\mathsf{V}(S_{\bc_n})$.
\end{enumerate}
\end{lemma}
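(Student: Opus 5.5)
The four parts split into \emph{equalities}, which I would prove by exhibiting subdirect products and subalgebras of ideal quotients and invoking Proposition~\ref{siflat}, and \emph{strict inclusions}, which I would prove by writing down a separating identity. Throughout, a sum $\sum x_ix_j$ is nonzero under an assignment into a graph semiring exactly when every term evaluates to $\omega$. That in turn happens exactly when the assignment is a graph homomorphism from the ``variable graph'' (edges $x_i\to x_j$) into the graph.

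\emph{Reduction from $m$ copies to two.} Let $T=\bigcup^{\omega}_{1\le i\le m}S$. For $i\ne j$ let $J_{ij}$ be $0$ together with the non-$\omega$ elements of all copies other than $i$ and $j$. Then $J_{ij}$ is an ideal, since cross-copy products are $0$ and the order is flat, and $T/J_{ij}\cong S\circ S$. Each element other than $0$ lies outside some $J_{ij}$, and $\omega$ lies outside all of them, so $\bigcap J_{ij}=\{0\}$. Proposition~\ref{siflat} then gives $T\in\mathsf V(S\circ S)$; the reverse inclusion holds because $S\circ S$ is a subalgebra of $T$. This settles the equalities in (i) and (iii). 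It also reduces (iv), and the right-hand equality of (ii), to embedding two copies.

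\emph{Embedding two copies (the main step).} For (iv), take in $S_{\bc_n}\times S_{\bc_n}$ the elements $(a_k,a_k)$ and $(a_k,a_{k+1})$ (indices mod $n$), together with $(\omega,\omega)$ and everything they generate. Let $J$ be the set of pairs having a coordinate equal to $0$. Then $J$ is closed under multiplication by anything and under addition with anything, so it is an ideal. Sums of distinct generators, and cross products such as $(a_k,a_k)(a_{k+1},a_{k+2})=(\omega,a_ka_{k+2})$, land in $J$ because $a_ka_{k+2}=0$ when $n\ge 2$. The quotient is therefore $S_{\bc_n}\circ S_{\bc_n}$, which lies in $\mathsf V(S_{\bc_n})$. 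This is exactly where $n\ge2$ is needed, and it is why $\bc_1$ behaves differently in (iii). The same diagonal/shifted-diagonal trick, applied in $S_{\bp_{n+1}}\times S_{\bp_{n+1}}$ with $(a_k,a_k)$ and $(a_k,a_{k+1})$ for $1\le k\le n$, produces $S_{\bp_n}\circ S_{\bp_n}$. Using more coordinates, it produces $S_{\bp_{n+1}}\circ\bigcup^{\omega}_{m}S_{\bp_n}$, which gives the inclusions in (ii); the opposite inclusion is again by subalgebra. The care needed here is in checking that every unwanted product or sum really does fall into $J$.

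\emph{Strictness.} For (i), let $\bu=\sum_{i<n}x_ix_{i+1}+\sum_{i<n}y_iy_{i+1}$. A homomorphism from a directed path with $n$ vertices into $\bp_n$ must be the identity onto $\bp_n$, so both $x_1$ and $y_1$ map to $a_1$ and $y_2$ maps to $a_2$. Hence $S_{\bp_n}$ satisfies $\bu\approx\bu+x_1y_2$, while $S_{\bp_n}\circ S_{\bp_n}$ fails it by sending the $x$'s to one copy and the $y$'s to the other. For (iii), the identity $x^2+y^2\approx x^2+y^2+xy$ holds in $S_{\bc_1}$ but fails in two copies. For (ii), the path identity $x_1x_2+\cdots+x_nx_{n+1}\approx 0$ holds in $S_{\bp_n}\circ S_{\bp_n}$, since it has no directed path with $n+1$ vertices, but fails in $S_{\bp_{n+1}}$.
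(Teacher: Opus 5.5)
Your separating identities for the three strict inclusions are correct, and so are the two-copy embeddings (diagonal and shifted diagonal in $S_{\bc_n}^2$ for $n\ge 2$ and in $S_{\bp_{n+1}}^2$). The gap is the ``reduction from $m$ copies to two''.

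\textbf{The sets $J_{ij}$ are not ideals.} Take $k\notin\{i,j\}$. The element $a_1$ of the $k$th copy lies in $J_{ij}$, but its product with $a_2$ of the same copy is $\omega\notin J_{ij}$. You checked only cross-copy products.

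\textbf{No other choice of ideals can repair this.} By Lemma~\ref{0wunion}, $T=\bigcup^{\omega}_{1\le i\le m}S$ is subdirectly irreducible with least nonzero ideal $\{0,\omega\}$. So every nonzero ideal of $T$ contains $\omega$. Hence $T$ is never a subdirect product of proper ideal quotients of itself. Moreover, quotients of $T$ can only witness the trivial inclusion $\mathsf{V}(S\circ S)\subseteq\mathsf{V}(T)$. As written, therefore, the following are unproved: the equalities in (i) and (iii), and the passage from two to $m$ copies in (iv).

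\textbf{The fix is to build $T$ upward}, as an ideal quotient of a subalgebra of a direct power, as in the proofs of Lemmas~\ref{w3a3aba} and~\ref{abap3}(i).
\begin{enumerate}[$(1)$]
\item Write the two copies inside $S\circ S$ as $\{a_k\}$ and $\{b_k\}$.
\item In $(S\circ S)^m$, take the elements $\sigma^j(a_k,b_k,\ldots,b_k)$, $1\le j\le m$, with $\sigma$ the cyclic shift.
\item Factor out the ideal of tuples having a $0$-coordinate.
\end{enumerate}
Elements with the same $j$ multiply exactly as in $S$. For $j\ne j'$, there is a coordinate in which one factor has an $a$-entry and the other a $b$-entry, so the product has a $0$-coordinate. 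The quotient is therefore $\bigcup^{\omega}_{1\le j\le m}S$. This gives (i) and (iii) directly, and combined with your two-copy embedding it gives (iv).

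\textbf{Alternative for (iv) and for the right-hand equality of (ii).} You can work directly in a power of $S_{\bc_n}$ or $S_{\bp_{n+1}}$, using one copy per $0/1$ shift vector (the zero vector giving the diagonal copy). This works precisely when any two chosen vectors have a non-constant difference, modulo $n$ in the cycle case. That condition is the content your phrase ``using more coordinates'' in (ii) still needs to state and check.
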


\begin{lemma}\label{w3a3aba}
Let $m\geq 1$ be an integer and let $\bw$ be a word of length $3$.
\begin{enumerate}[$(\rm i)$]
\item If $|c(\bw)|\geq 2$, then $S(\bw) \circ \bigcup_{1 \leq i \leq m}^{\omega} S(a_i b_i) \in \mathsf{V}(S(\bw))$.

\item If $|c(\bw)|=1$, then $S(\bw) \circ \bigcup_{1 \leq i \leq m}^{\omega} S_c(a_i b_i) \in \mathsf{V}(S(\bw))$.
\end{enumerate}
\end{lemma}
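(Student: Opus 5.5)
Since $\mathsf V(S(\bw))$ is closed under subalgebras and finite direct products, the plan is to embed the $\{0,\omega\}$-direct union as a subalgebra of a suitable ideal quotient of a power of $S(\bw)$. By Proposition~\ref{siflat}, equivalently, we will construct a flat semiring in $\mathsf V(S(\bw))$ and exhibit the target as a subalgebra of it. First we reduce the number of summands. The $\{0,\omega\}$-direct union of $m$ copies of $S(ab)$ (resp.\ $S_c(ab)$) lies in the variety generated by two copies; this is Lemma~\ref{scnsln1}(\ref{eq250602011}) with $n=2$ and $S_{\bp_2}\cong S(ab)$, and Lemma~\ref{scnsln1}(\ref{eq25060203}) with $S_{\bc_2}\cong S_c(ab)$. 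We expect the same kind of argument to pass to the bigger union $S(\bw)\circ\bigcup^\omega$. We would reprove the relevant step inside $S(\bw)\times\cdots$ so that $m$ is handled uniformly.

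The main construction is as follows. Take a direct power $P=S(\bw)^{N}$ and pick elements of $P$ whose coordinates are subwords of $\bw$. Case (i): $|c(\bw)|\ge 2$, so $\bw$ has a factor $xy$ with $x\neq y$, namely one of $a^2b, ab^2, aba, abc$. For the $i$-th pair $a_i,b_i$, choose tuples $\alpha_i,\beta_i\in P$ with the following properties:
\begin{itemize}
\item in each coordinate the product $\alpha_i\beta_i$ either is $0$ or completes to $\bw$ after multiplying by the remaining letter;
\item the ``$\omega$'' is realised as the tuple $(\bw,\bw,\ldots,\bw)$, or more conveniently as that tuple on a chosen coordinate set and $0$ elsewhere.
\end{itemize}
Concretely, suppose $\bw=\bu\bv$ with $\ell(\bu)=2$, $\ell(\bv)=1$, or the reverse. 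We let $a_i$ be a tuple equal to $\bu$ in coordinate $i$ and to a ``shifted'' factor elsewhere, so that $a_ib_i=(\bw,\dots)$ only on the designated product. We then pass to the ideal quotient by the ideal $J$ of all elements not in the subalgebra generated by the chosen tuples, together with the elements that would create spurious nonzero products. The aim is that the generated subalgebra modulo $J$ is flat. Its nonzero products should be exactly: $a_ib_i=\omega$, the products internal to the copy of $S(\bw)$ (the diagonal tuples of letters of $\bw$), and nothing mixing distinct blocks. To kill unwanted products we use coordinates where one factor is $0$. To keep the result flat we also need all the chosen $\omega$-values to coincide; we arrange this by making every nonzero top product equal the constant diagonal tuple $\bw$.

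Case (ii), $\bw=a^3$: here $a_i$ must satisfy $a_ib_i=b_ia_i=\omega$, and $a_i^2=b_i^2=0$. We use the factorisations $a\cdot a^2=a^2\cdot a=a^3$. Take coordinates pairs so that $a_i=(a,a^2)$ and $b_i=(a^2,a)$ in a block. Then $a_ib_i=b_ia_i=(a^3,a^3)$ and $a_i^2=(a^2,0)\mapsto 0$ after quotienting; this gives the commutative $S_c(ab)$.

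The main obstacle is keeping the top element common. The copy of $S(\bw)$ must have its annihilator equal to the same $\omega$ as the blocks, while cross-products such as $a_i\cdot a_j$, $a_i\cdot x$ (with $x$ a letter of the $S(\bw)$ copy) and $a_i\cdot\bu'$ must all be forced into $J$. We would handle this by padding: use for the $S(\bw)$ copy coordinates where every block element is $0$, and vice versa, and take $\omega$ to be the all-$\bw$ tuple. We then check that $J$, the complement of the intended element set, is an ideal and an upward filter. This check reduces to verifying that no product of two intended elements lands on a non-intended nonzero tuple, and we expect this step to be the genuinely fiddly part.
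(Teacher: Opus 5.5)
\textbf{There is a genuine gap.} You never fix the generators, and the one concrete device you offer for the main obstacle cannot work.

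In an ideal quotient $A/J$, no two elements outside $J$ are identified. So the top $\mathbf{x}_1\mathbf{x}_2\mathbf{x}_3$ of the $S(\bw)$-copy and every block top $\alpha_i\beta_i$ must be literally the same tuple (say the all-$\bw$ tuple $\ba$, as you suggest). Consequently every generator, being a factor of that tuple, must be nonzero on its entire support. Your padding (the $S(\bw)$-copy zero on the block coordinates and vice versa) gives these tops disjoint supports, so their sum is the zero tuple. Then no congruence whatsoever can identify two of them without collapsing them to $0$, because $\omega_1=\omega_1+\omega_1$ would be congruent to $\omega_1+\omega_2=0$. For the same reason you cannot kill unwanted products ``by coordinates where one factor is $0$'', nor put the $m$ blocks of case (ii) on separate coordinate pairs.

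There are further problems. Your description of $J$ does not define a subset of $A$. The condition you say the ideal check reduces to is not the relevant one: a product of intended elements landing in $J$ is exactly how unwanted products get killed, and what must hold is $JA\cup AJ\subseteq J$. Finally, the reduction via Lemma~\ref{scnsln1} says nothing about unions that also contain $S(\bw)$.

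\textbf{The missing idea.} Keep all generators of full support and let unwanted products vanish because the words become too long. The paper works in $S(\bw)^{m+1}$ with $\bw=x_1x_2x_3$. The diagonal tuples $\mathbf{x}_1,\mathbf{x}_2,\mathbf{x}_3$ give the copy of $S(\bw)$. With $\sigma$ the cyclic shift, it takes $\mathbf{a}_k=\sigma^k(x_1,x_1x_2,\ldots,x_1x_2)$ and $\mathbf{b}_k=\sigma^k(x_2x_3,x_3,\ldots,x_3)$ for $1\le k\le m$.
\begin{itemize}
\item $\mathbf{a}_k\mathbf{b}_k=\ba$.
\item For $j\neq k$, the coordinate where $\mathbf{b}_k$ equals $x_2x_3$ gives $x_1x_2\cdot x_2x_3=0$ in $\mathbf{a}_j\mathbf{b}_k$.
\item Likewise $\mathbf{b}_k\mathbf{a}_j$ ($j\neq k$), all $\mathbf{a}_j\mathbf{a}_k$ and $\mathbf{b}_j\mathbf{b}_k$, and the mixed products with the $\mathbf{x}_i$ either have a zero coordinate or have a coordinate already equal to $\bw$ beside one that is not. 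So they do not divide $\ba$.
\end{itemize}
Now quotient by $J=\{x\in A\mid \ba\notin S^1xS^1\}$, which is automatically an ideal and an order filter. Distinct survivors differ in some coordinate, so their sum has a zero coordinate and lies in $J$. Hence $A/J$ is flat, and it is the required $\{0,\omega\}$-direct union.

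The dichotomy between (i) and (ii) comes from $\mathbf{b}_k\mathbf{a}_k$, whose coordinates $x_2x_3x_1$ and $x_3x_1x_2$ equal $\bw$ exactly when $x_1=x_2=x_3$. Your case (ii) pair $(a,a^2),(a^2,a)$ is exactly this construction for $m=1$. The cyclic shift is what lets $m$ blocks and the diagonal copy share one top.
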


\begin{proof}
Let $\bw = x_1 x_2 x_3$, where any pair of $x_i$ and $x_j$ may be equal, and denote by $S$ the direct product of $m+1$ copies of $S(x_1 x_2 x_3)$.
Let $A$ be the subalgebra of $S$ generated by the following elements
\[
\mathbf{x}_1 = (x_1, x_1, \ldots, x_1), \quad
\mathbf{x}_2 = (x_2, x_2, \ldots, x_2), \quad
\mathbf{x}_3 = (x_3, x_3, \ldots, x_3),
\]
\[
\mathbf{a}_k = \sigma^k(x_1, x_1 x_2, x_1 x_2, \ldots, x_1 x_2), \quad
\mathbf{b}_k = \sigma^k(x_2 x_3, x_3, x_3, \ldots, x_3),
\]
where $k$ ranges from $1$ to $m$. Take $\sigma\colon S \rightarrow S$ to be the right cyclic shift mapping defined by
\[
\sigma(x_1, x_2, \ldots, x_{m+1}) = (x_{m+1}, x_1, \ldots, x_m).
\]
Set $\mathbf{a} = (x_1 x_2 x_3, \ldots, x_1 x_2 x_3) \in S$ and $J = \{ x \in A \mid \mathbf{a} \notin S^{1} x S^{1} \}$.
Then it is easy to verify that $J$ is an ideal of $A$.
In the ideal quotient $A/J$, for each $\mathbf{x} \in A$, write $\overline{\mathbf{x}} = \mathbf{x} / J$.
It is now a routine matter to verify that  $A / J$ is isomorphic to the $\{0, \omega\}$-direct union
\[
\langle \overline{\mathbf{x}_1}, \overline{\mathbf{x}_2}, \overline{\mathbf{x}_3} \rangle \circ \bigcup\nolimits_{1 \leq k \leq m}^{\omega} \langle \overline{\mathbf{a}_k}, \overline{\mathbf{b}_k} \rangle,
\]
where $\langle \overline{\mathbf{x}_1}, \overline{\mathbf{x}_2}, \overline{\mathbf{x}_3} \rangle$ is isomorphic to $S(x_1x_2x_3)$.
We now consider two cases:

\textbf{Case 1.} $|c(\bw)| \geq 2$. Then $\langle \overline{\mathbf{a}_k}, \overline{\mathbf{b}_k} \rangle$ is isomorphic to $S(a_k b_k)$. This implies that $A / J$ is isomorphic to
\[
S(\bw) \circ \bigcup\nolimits_{1 \leq k \leq m}^{\omega} S(a_k b_k),
\]
which lies in the variety $\mathsf{V}(S(\bw))$. This completes the proof of part (i).

\textbf{Case 2.} $|c(\bw)| = 1$.  Then $\langle \overline{\mathbf{a}_k}, \overline{\mathbf{b}_k} \rangle$ is isomorphic to $S_c(a_k b_k)$. This implies that $A / J$ is isomorphic to
\[
S(\bw) \circ \bigcup\nolimits_{1 \leq k \leq m}^{\omega} S_c(a_k b_k),
\]
which lies in the variety $\mathsf{V}(S(\bw))$. This completes the proof of part (ii).
\end{proof}

\begin{lemma}\label{abap3}
Let $m$ be a positive integer.
\begin{enumerate}[$(\rm i)$]
\item $S(aba) \circ \bigcup_{1 \leq i \leq m}^{\omega} S_{\mathbf{p}_3} \in \mathsf{V}(S(aba) \circ S_{\mathbf{p}_3})$.
\item $S(aba)\circ S_{\mathbf{p}_{3}}\in{\mathsf V}(S(a^{3}),S(aba))$.
\end{enumerate}

\end{lemma}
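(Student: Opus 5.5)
We prove both parts as in the proof of Lemma~\ref{w3a3aba}. Each time we take a direct product of copies of the generators and choose generators of a subalgebra $A$. We let $\mathbf{t}$ be a fixed product of generators, called the top element, and form the ideal quotient $A/J$, where $J=\{\mathbf{x}\in A\mid \mathbf{t}\notin A^{1}\mathbf{x}A^{1}\}$. Since $J$ consists of the nondivisors of $\mathbf{t}$, it is a multiplicative ideal; in a flat semiring it is also an order filter. So $A/J$ is a flat semiring in the variety, with unique annihilator $\omega=\overline{\mathbf{t}}$. It remains to choose generators so that the surviving elements realize the required $\{0,\omega\}$-direct union.

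For (ii), we work in $S(a^3)\times S(aba)\times S(a^3)$, writing the letter of $S(a^3)$ as $x$. We take the top element $\mathbf{t}=(x^3,aba,x^3)$ and the generators
\[
\mathbf{A}=(x,a,x),\quad \mathbf{B}=(x,b,x),\quad
\mathbf{c}_1=(x^2,ab,x),\quad \mathbf{c}_2=(x,a,x^2),\quad \mathbf{c}_3=(x^2,ba,x).
\]
\begin{itemize}
\item The copy of $S(aba)$: $\mathbf{A}\mathbf{B}\mathbf{A}=\mathbf{t}$, and $\mathbf{A}\mathbf{B}$, $\mathbf{B}\mathbf{A}$ survive. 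Every other product in $\mathbf{A},\mathbf{B}$ is $0$ or lies in $J$; for instance $\mathbf{A}^2$ has middle coordinate $a\cdot a=0$.
\item The path: $\mathbf{c}_1\mathbf{c}_2=\mathbf{c}_2\mathbf{c}_3=\mathbf{t}$. Also $\mathbf{c}_1\mathbf{c}_3=0$, since its middle coordinate is $ab\cdot ba=0$, and $\mathbf{c}_2^2=0$, since its middle coordinate is $a\cdot a=0$.
\item Mixed products, such as $\mathbf{B}\mathbf{c}_2=(x^2,ba,x^3)$, are nonzero but lie in $J$. The reason is that every nonempty left or right multiplier strictly increases the length in the third coordinate, and $\mathbf{t}$ already has $x^3$ there.
\item The asymmetric $x$-exponents in the first and third coordinates make $\mathbf{c}_1,\mathbf{c}_2,\mathbf{c}_3$ distinct from $\mathbf{A},\mathbf{B},\mathbf{A}\mathbf{B},\mathbf{B}\mathbf{A}$.
\end{itemize}
Together these give $A/J\cong S(aba)\circ S_{\mathbf{p}_3}$, which proves (ii).

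For (i), we take $m$ copies of $T=S(aba)\circ S_{\mathbf{p}_3}$, where $S_{\mathbf{p}_3}$ has vertices $c_1,c_2,c_3$. Let $\sigma$ be the cyclic shift from the proof of Lemma~\ref{w3a3aba}. We take the diagonal elements $(a,\dots,a)$ and $(b,\dots,b)$, and for $1\le k\le m$ the elements
\[
\mathbf{c}^{(k)}_1=\sigma^k(c_1,ab,\dots,ab),\quad \mathbf{c}^{(k)}_2=\sigma^k(c_2,a,\dots,a),\quad \mathbf{c}^{(k)}_3=\sigma^k(c_3,ba,\dots,ba).
\]
The top element is $\mathbf{t}=(\omega,\dots,\omega)$, where $\omega$ is the annihilator of $T$.
\begin{itemize}
\item Within one $k$: $\mathbf{c}^{(k)}_1\mathbf{c}^{(k)}_2=\mathbf{c}^{(k)}_2\mathbf{c}^{(k)}_3=\mathbf{t}$, because $ab\cdot a=a\cdot ba=aba=\omega$ and $c_1c_2=c_2c_3=\omega$.
\item Across different $k\neq l$: any product $\mathbf{c}^{(k)}_i\mathbf{c}^{(l)}_j$ has a coordinate that multiplies some $c_i$ by a word from the $aba$ part, which is $0$ in $T$.
\item With the diagonal letters: any product of a $\mathbf{c}^{(k)}_i$ with a diagonal letter has the coordinate $c_i\cdot a$ or $c_i\cdot b$ (or the reverse), again $0$.
\item The diagonal letters generate a copy of $S(aba)$ modulo $J$.
\end{itemize}
Hence $A/J\cong S(aba)\circ\bigcup^{\omega}_{1\le i\le m}S_{\mathbf{p}_3}$, which proves (i).

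The main obstacle is the routine but fiddly verification of the isomorphisms. One must check that no two chosen generators or surviving products accidentally coincide, and that every product outside the intended multiplication table is either $0$ or falls into $J$. That is precisely why the coordinates in (ii) were chosen with unequal $x$-exponents.
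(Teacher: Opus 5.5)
Your proof is correct. For (i) you use the paper's construction: a product of $m$ copies of $S(aba)\circ S_{\mathbf p_3}$ with cyclically shifted generators. The only change is that the roles are swapped: you put the path vertices in the distinguished coordinate and $ab,a,ba$ elsewhere, and this orientation works verbatim even for $m=1$. Your $J$ of nondivisors of $\mathbf t$ coincides here with the paper's set of elements having a zero coordinate.

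For (ii) the routes genuinely differ. The paper argues equationally. Given an assignment $\varphi$ into $S(aba)\circ S_{\mathbf p_3}$ with $\varphi(\mathbf u)=\omega$, it composes with $\psi_1$ ($a_1\mapsto ab$, $a_2\mapsto a$, $a_3\mapsto ba$) into $S(aba)$ and with $\psi_2$ into $S(a^3)$. A case analysis on $\ell(\mathbf q)$ then forces $\varphi(\mathbf q)=\omega$. Your middle and third coordinates are exactly $\psi_1$ and $\psi_2$ evaluated on the generators. So you package the same data as an explicit member of $\mathsf{HS}$ of a direct product; the first $S(a^3)$ factor is in fact redundant, and $S(aba)\times S(a^3)$ already suffices. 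Your version gives a concrete witness and avoids the identity-by-identity analysis. The cost is that you must control all products and sums inside a subalgebra of a direct product.

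Two points need tightening. First, $A$ is not itself flat, so the filter property of $J$ needs its own justification. It follows because every element outside $J$ has all coordinates nonzero, so $\mathbf x\le\mathbf y\notin J$ forces $\mathbf x=\mathbf y$. Second, your reason for mixed products only invokes the third coordinate. But $\mathbf A\mathbf c_3=\mathbf c_1\mathbf A=(x^3,aba,x^2)$ lies in $J$ because of the first coordinate. The clean uniform argument is an exponent count. It shows that $\mathbf A\mathbf B\mathbf A$, $\mathbf c_1\mathbf c_2$ and $\mathbf c_2\mathbf c_3$ are the only words in the generators with value $\mathbf t$. Hence every element outside $J$ is the value of a factor of one of them.
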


\begin{proof}
(i) Let $T$ denote the direct product of $m$ copies of $S(aba) \circ S_{\mathbf{p}_3}$. Let $A$ be the subalgebra of $T$ generated by the following elements
\[
(a, a, \ldots, a), \quad (b, b, \ldots, b),
\]
\[
\sigma^k(ab, a_1, a_1, \ldots, a_1), \quad
\sigma^k(a, a_2, a_2, \ldots, a_2), \quad
\sigma^k(ba, a_2, a_3, \ldots, a_3),
\]
where $k$ ranges from $1$ to $m$, and $\sigma\colon T \rightarrow T$ is the right cyclic shift mapping defined by
\[
\sigma(x_1, x_2, \ldots, x_m) = (x_m, x_1, \ldots, x_{m-1}).
\]
Let $J$ be the set of all elements of $A$ that have a $0$-coordinate. Then it is easy to verify that $J$ is an ideal of $A$ and the ideal quotient $A/J$ is isomorphic to $S(aba) \circ \bigcup\nolimits_{1 \leq i \leq m}^{\omega} S_{\mathbf{p}_3}$. Thus, $S(aba) \circ \bigcup\nolimits_{1 \leq i \leq m}^{\omega} S_{\mathbf{p}_3} \in \mathsf{V}(S(aba) \circ S_{\mathbf{p}_3})$.

To prove (ii), it suffices to verify that $S(aba) \circ S_{\mathbf{p}_{3}}$ satisfies every nontrivial identity of the form $\bu \approx \bu + \mathbf{q}$ that is satisfied by both $S(a^{3})$ and $S(aba)$. Let
\[
\varphi \colon c(\bu+\bq) \to S(aba) \circ S_{\mathbf{p}_{3}}
\]
be an arbitrary assignment. We need to show that $\varphi(\bu)=\varphi(\bu+\bq)$.

If $\varphi(\bu)=0$, then $\varphi(\bu)=0=\varphi(\bu+\mathbf{q})$.
If $\varphi(\bu) \neq 0$, then every word in $\bu$ has length less than $4$,
since $S(aba) \circ S_{\mathbf{p}_{3}}$ is $4$-nilpotent.
We may write
\[
\bu = L_{1}(\bu) + L_{2}(\bu) + L_{3}(\bu),
\]
where $L_k(\bu)=\{\mathbf{u_i}\in \bu \mid \ell(\mathbf{u_i})=k\}$ for $k=1,2,3$.

It is easy to see that
$\bu \neq L_{1}(\bu)$, since $\bu \approx \bu + \mathbf{q}$ is nontrivial.
Thus, $\varphi(\bu) \in \{ab, ba, \omega\}$. Consider the following two cases:

\textbf{Case 1.} $\varphi(\bu) \in \{ab, ba\}$. Then $L_{3}(\bu) = \emptyset$, $\varphi(x) \in \{ab, ba\}$ for all $x \in L_{1}(\bu)$, and $(\varphi(x), \varphi(y)) \in \{(a,b), (b,a)\}$ for each $xy \in L_{2}(\bu)$.
Thus, $\varphi$ can be viewed as an assignment from $c(\bu+\bq)$ to $S(aba)$. Since $S(aba)$ satisfies $\bu \approx \bu + \bq$, it follows that $\varphi(\bu) = \varphi(\bu+\bq)$.

\textbf{Case 2.} $\varphi(\bu) = \omega$ (the unique annihilator in $S(aba)\circ S_{\bp_3}$). This implies that
\begin{enumerate}[$(\rm a)$]
    \item $\varphi(x) = \omega$ for all $x \in L_{1}(\bu)$;
    \item $\{\varphi(x), \varphi(y)\} \in \{(ab,a), (a,ba), (a_{1},a_{2}), (a_{2},a_{3})\}$ for all $xy \in L_{2}(\bu)$; and
    \item $(\varphi(x), \varphi(y), \varphi(z)) = (a,b,a)$ for all $xyz \in L_{3}(\bu)$.
\end{enumerate}
Define assignments $\psi_{1}: c(\bu+\bq) \to S(aba)$ and $\psi_{2}: c(\bu+\bq) \to S(a^{3})$ by
\[
\psi_1(x) = \begin{cases}
ab, & \varphi(x) = a_1 \\
a, & \varphi(x) = a_2 \\
ba, & \varphi(x) = a_3 \\
\varphi(x), & \text{otherwise}
\end{cases}, \qquad
\psi_2(x) = \begin{cases}
a^3, & \varphi(x) = \omega \\
a^2, & \varphi(x) \in \{ab, ba, a_2\} \\
a, & \varphi(x) \in \{a, b, a_1, a_3\}
\end{cases}.
\]
Thus $\psi_1(\bu) = aba$ and $\psi_2(\bu) = a^3$. Since the flat semirings $S(aba)$ and $S(a^3)$ satisfy the identity $\bu \approx \bu + \bq$, it follows that $\psi_1(\bq)=aba$ and $\psi_2(\bq)=a^3$; hence $\ell(\bq) \in \{2,3\}$
(the case $\ell(\bq)=1$ gives $\bq \in \bu$, making $\bu \approx \bu + \bq$ trivial).

\textbf{Subcase 2.1.} $\ell(\bq) = 3$, say $\bq = x_1 x_2 x_3$. It is easy to see that
\[
(\psi_{1}(x_{1}), \psi_{1}(x_{2}), \psi_{1}(x_{3})) = (a, b, a).
\]
Thus, $\varphi(x_{2})  = b$ and $\varphi(x_1), \varphi(x_3) \in \{a, a_2\}$.
Also, since $\psi_2(x_1 x_2 x_3) = a^3$, it follows that $\psi_{2}(x_{1}) = \psi_{2}(x_{3}) = a$, and so
$\varphi(x_1), \varphi(x_3) \in \{a, a_1, a_3, b\}$.
Therefore,
\[
\varphi(x_{1}), \varphi(x_{3}) \in \{a, a_{2}\} \cap \{a, b, a_{1}, a_{3}\} = \{a\},
\]
and so $\varphi(x_1) = \varphi(x_3) = a$.
This implies that $\varphi(\bq) = aba = \varphi(\bu)$, and so $\varphi(\bu)=\varphi(\bu+\bq)$, as required.

\textbf{Subcase 2.2.} $\ell(\bq) = 2$, say $\bq = x_1 x_2$. It is easy to see that $(\psi_{1}(x_{1}), \psi_{1}(x_{2})) \in \{(ab,a), (a,ba)\}$ and $(\psi_{2}(x_{1}), \psi_{2}(x_{2})) \in \{(a,a^{2}), (a^{2},a)\}$. Thus,
\[
(\varphi(x_1), \varphi(x_2)) \in \{(ab,a_1) \times \{a,a_2\} \cup \{a,a_2\} \times \{ba,a_3\} \} \triangleq A_1,
\]
\[
(\varphi(x_1), \varphi(x_2)) \in \{(a,b,a_1,a_3) \times \{ab,ba,a_2\} \cup \{ab,ba,a_2\} \times \{a,b,a_1,a_3\}\} \triangleq A_2.
\]
It is now a routine matter to check that
\[
(\varphi(x_1), \varphi(x_2)) \in A_1 \cap A_2 = \{(ab,a), (a,ba), (a_1,a_2), (a_2,a_3)\}.
\]
This implies that $\varphi(\bq) = \varphi(x_{1})\varphi(x_{2}) = \omega = \varphi(\bu)$, and so $\varphi(\bu) = \varphi(\bu+\bq)$.
\end{proof}

With the necessary preparatory results established,
we now prove that the variety $\mathcal{W}_3$ is a Cross variety.
Our strategy is as follows.
Recall that $\mathcal{W}_3$ is generated by the following five flat semirings:
\[
S(a^3),\; S(a^2b),\; S(ab^2),\; S(aba),\; S(abc).
\]
From these five algebras, we extract a finite set of identities that they all satisfy,
and define $\mathcal{V}$ to be the subvariety of $\mathbf F$ determined by these identities.
Obviously, $\mathcal{W}_3$ is a subvariety of $\mathcal{V}$, and $\mathcal{V}$ is finitely based.
We then prove that $\mathcal{V}$ is a Cross variety by showing that it is finitely generated,
and has only finitely many subvarieties.
This is achieved through a complete description of the finite subdirectly irreducible members of
$\mathcal{V}$ and the varieties they generate.
Finally, we prove that $\mathcal{V}$ is a subvariety of $\mathcal{W}_3$,
thereby establishing $\mathcal{V} = \mathcal{W}_3$ and completing the proof.

Let $\mathcal{V}$ be the subvariety of the variety $\mathbf{F}$ defined by the following identities:
\begin{align}
&x_1x_2x_3x_4 \approx y^4, \label{id1}\\
&x_1x_2x_3+y^2 \approx y^4, \label{id11}\\
&x^2+yz \approx y^2+xz, \label{id4}\\
&x^3+yz \approx x^3+zy, \label{id12}\\
&xy+yx+zt \approx xy+yx+tz, \label{id5}\\
&x_1x_2+x_2x_3+x_3x_1 \approx x_1^2+x_2x_3, \label{id3}\\
&x_1x_2+x_2x_3+x_3x_4 \approx x_1x_2+x_2x_3+x_4x_3, \label{id2}\\
&x_1x_2x_3+x_3y \approx x_1x_2x_3+x_3y+x_3x_2x_3, \label{id7}\\
&x_1x_2x_3+yx_1 \approx x_1x_2x_3+yx_1+x_1x_2x_1, \label{id8}\\
&x_1x_2x_3+x_2y \approx x_1x_2x_3+x_2y+x_2x_1x_3, \label{id9}\\
&x_1x_2x_3+yx_2 \approx x_1x_2x_3+yx_2+x_1x_3x_2, \label{id10}\\
&x^2y+x_1x_2+x_2x_3 \approx x^2y+x_2x_1+x_3x_2, \label{id13}\\
&xy^2+x_1x_2+x_2x_3 \approx xy^2+x_2x_1+x_3x_2, \label{id14}\\
&xyz+x_1x_2+x_2x_3 \approx zyx+x_1x_2+x_2x_3, \label{id15}\\
&x_1x_2x_3+y_1y_2y_3 \approx (x_1+y_1)(x_2+y_2)(x_3+y_3). \label{id6}
\end{align}

\begin{proposition}\label{pro26071720}
The variety $\mathcal{V}$ is finitely based.
\end{proposition}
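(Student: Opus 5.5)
This is almost immediate from Lemma~\ref{lem24121301}. The variety $\mathcal{V}$ is defined as the subvariety of $\mathbf{F}$ cut out by the fifteen identities \eqref{id1}--\eqref{id6}. Since $\mathbf{F}$ is finitely based, fix a finite set $\Sigma$ of ai-semiring identities defining $\mathbf{F}$. An ai-semiring lies in $\mathcal{V}$ exactly when it lies in $\mathbf{F}$ and satisfies \eqref{id1}--\eqref{id6}, that is, exactly when it satisfies
\[
\Sigma\cup\{\eqref{id1},\ldots,\eqref{id6}\}.
\]
This set is finite, so it is a finite basis for $\mathcal{V}$.

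One point of bookkeeping needs care: some of the listed identities are written in abbreviated or non-standard form, so we must check that each one is genuinely a finite collection of ai-semiring identities.

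\begin{itemize}
\item Identity \eqref{id1}, $x_1x_2x_3x_4\approx y^4$, is an ordinary identity between words. Together with the other identities it forces every product of length four to be a constant absorbing element. This is how the notation ``$\approx 0$'' from \eqref{Ufree} would be encoded, but here it is written explicitly as a single identity.
\item Identity \eqref{id6} has a product of sums on its right-hand side. Expanding by distributivity turns it into a single polynomial identity with $8$ monomials on the right.
\item All remaining identities are already equalities of polynomials.
\end{itemize}

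Hence the defining set is a finite set of polynomial identities, and adjoining it to $\Sigma$ gives a finite basis for $\mathcal{V}$.

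There is no real obstacle. The only step that could be a concern is making sure no identity in the list secretly stands for an infinite family (for example, through a ``$\approx 0$'' abbreviation with an unspecified fresh variable). Even that abbreviation expands to only finitely many identities (three equations with one fresh variable), so finiteness holds in every case.
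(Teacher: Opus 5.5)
Your proof is correct and takes the same route as the paper: take a finite basis of $\mathbf{F}$ (Lemma~\ref{lem24121301}) and add the fifteen defining identities \eqref{id1}--\eqref{id6}. Your check that each listed identity, including \eqref{id6} after expansion, is a single polynomial identity is harmless but goes beyond what the paper bothers to state.
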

\begin{proof}
By definition, $\mathcal{V}$ is finitely based within $\mathbf{F}$.
Since $\mathbf{F}$ itself is finitely based by Lemma~\ref{lem24121301},
it follows that $\mathcal{V}$ is finitely based.
\end{proof}

An algebra is \emph{locally finite} if each of its finitely generated subalgebras is finite;
a variety is locally finite if all its members are locally finite.
\begin{proposition}\label{vlocal}
The variety $\mathcal{V}$ is locally finite.
\end{proposition}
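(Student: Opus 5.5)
To show that $\mathcal{V}$ is locally finite, I will show that for every finite set $X=\{x_1,\dots,x_n\}$ the free algebra $F_{\mathcal V}(X)=P_f(X^+)/\theta$ is finite, where $\theta$ is the fully invariant congruence determined by \eqref{id1}--\eqref{id6} together with the identities of $\mathbf F$. A variety is locally finite exactly when all its finitely generated free algebras are finite, so this suffices.

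\textbf{Step 1: collapse long words.} Identity \eqref{id1} says that every product of four elements equals $y^4$ for any $y$. In particular all words of length at least $4$ are identified with one another, and hence with $x_1^4$. Let me write $0$ for this common value. From \eqref{id1} we get $0\cdot t=t\cdot 0=0$ for every term $t$.

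\textbf{Step 2: $0$ absorbs addition.} We need $0+t=0$. Identity \eqref{id11} gives $x_1x_2x_3+y^2\approx y^4$. Substituting $x_1\mapsto w^2$ (or any word of length $\ge 2$) turns $x_1x_2x_3$ into a word of length at least $4$, so it becomes $0$, and we obtain $0+y^2\approx 0$. To pass from squares to arbitrary terms, I will combine this with the ai-semiring laws and the identity \eqref{xxy}-type behaviour available in $\mathbf F$. An alternative route is to note that $\mathcal V\subseteq\mathbf F$: every subdirectly irreducible member is flat by Lemma~\ref{lem24121301}, and in a flat semiring satisfying \eqref{id1} the value $y^4$ is the zero, which is also the additive top because $a+b=0$ for $a\neq b$. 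Since the identity holds in every subdirectly irreducible member, $t+y^4\approx y^4$ holds throughout $\mathcal V$.

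\textbf{Step 3: count the classes.} By Steps 1 and 2, every term $t$ over $X$ is equivalent either to $0$, if $t$ contains a word of length at least $4$, or to a sum of distinct words of length at most $3$ in $X$. There are finitely many such words and hence finitely many such sums, at most $2^{N}$ with $N=n+n^2+n^3$. So $F_{\mathcal V}(X)$ has at most $2^N+1$ elements and is finite.

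I expect the main obstacle to be Step 2, since this is where one has to justify that $0$ is absorbing for addition. The cleanest way through is the argument via subdirectly irreducible flat semirings. It gives the identity $u+y^4\approx y^4$ semantically: it holds in every flat semiring of $\mathcal V$, and therefore in $\mathcal V$ by Birkhoff's theorem. Once that is in place, the counting in Step 3 is immediate.
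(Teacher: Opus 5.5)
Your proof is correct and is essentially the paper's argument made self-contained: the paper observes that \eqref{id1} makes the multiplicative reduct $4$-nilpotent, hence locally finite, and then cites \cite[Theorem 2.11(1)]{jrz}, whose content here is exactly your counting in Step 3. Step 2 is in fact not needed, because idempotent commutative addition already bounds the number of classes by the $2^{N+1}-1$ nonempty subsets of the $N+1$ possible word values (your first suggested route via \eqref{xxy} would also fail, since $\mathbf F$ does not satisfy that identity); your semantic argument through flat subdirectly irreducible members is nevertheless valid.
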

\begin{proof}
Let $S$ be an arbitrary ai-semiring in $\mathcal{V}$.
Then $S$ satisfies the identity~\eqref{id1}; that is, the product of any four elements in $S$ is zero.
This implies that the multiplicative reduct of $S$ is locally finite.
By \cite[Theorem 2.11(1)]{jrz}, every ai-semiring whose multiplicative reduct is locally finite is itself locally finite.
Applying this to $S$, we conclude that $S$ is locally finite.
\end{proof}

Since every locally finite variety is generated by its finite subdirectly irreducible members,
we now focus on these members in $\mathcal{V}$.
Two ai-semirings are \emph{equationally equivalent} if they generate the same variety.

\begin{proposition}\label{vfsi}
Up to isomorphism, the following algebras are a complete list of the
finite nontrivial subdirectly irreducible members of $\mathcal{V}$:
\begin{itemize}
\item $S(a)$;

\item $S_{\mathbf{c}_{1}}$;

\item $\bigcup_{1\leq i\leq m}^{\omega} S_{\mathbf{c}_{2}}$, $m\geq 1$;

\item $(\bigcup_{1\leq i\leq m}^{\omega} S_{\bp_{2}})\circ  (\bigcup_{1\leq i\leq n}^{\omega} S_{\bp_{3}})$, $m, n\geq 0$, $m+n>0$;

\item $S(a^3)\circ \bigcup_{1\leq i\leq m}^{\omega} S_{c}(a_i b_i)$, $m\geq 0$;

\item $S(a^2b)\circ \bigcup_{1\leq i\leq m}^{\omega} S_c(a_i b_i)$, $m\geq 0$;

\item $S(ab^2)\circ \bigcup_{1\leq i\leq m}^{\omega} S_c(a_i b_i)$, $m\geq 0$;

\item $S(aba)\circ\left(\bigcup_{1\leq i\leq n}^{\omega} S_{\bp_2}\right)\circ \left(\bigcup_{1\leq j\leq m}^{\omega} S_{\mathbf{p}_3}\right)$, $m, n\geq 0$;

\item $S(abc)\circ \bigcup_{1\leq i\leq m}^{\omega} S(a_ib_i)$, $m\geq 0$.
\end{itemize}

Let $\mathcal{C}$ denote the set
\[
\mathcal{C}=\{S(a),\,S(a^{2}),\,S_c(ab),\,S(ab),\,S(ab)\circ S(ab),\,S_{\bp_3},\,S_{\bp_3}\circ S_{\bp_3},
\]
\[
S(a^{3}),\,S(a^{2}b),\,S(ab^{2}),\,S(aba),\,S(aba)\circ S_{\bp_3},\,S(abc)\}.
\]
Then each algebra in $\mathcal{C}$ is a finite nontrivial subdirectly irreducible member of $\mathcal{V}$.
Conversely, every finite nontrivial subdirectly irreducible member of $\mathcal{V}$
is equationally equivalent to some algebra in $\mathcal{C}$.
\end{proposition}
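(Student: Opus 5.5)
We need to prove two things: every algebra in the list lies in $\mathcal{V}$ and is subdirectly irreducible, and conversely every finite nontrivial subdirectly irreducible $S\in\mathcal{V}$ is isomorphic to one in the list. After that, the statement about $\mathcal{C}$ follows from Lemmas~\ref{scnsln1}, \ref{w3a3aba} and \ref{abap3}.

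\textbf{Listed algebras are in $\mathcal{V}$ and subdirectly irreducible.} Each listed algebra is a $\{0,\omega\}$-direct union of subdirectly irreducible nilpotent flat semirings, so it is subdirectly irreducible by Lemma~\ref{0wunion}. Membership in $\mathcal{V}$ reduces to checking identities \eqref{id1}--\eqref{id6}. For this, observe that every nonzero value of a polynomial involving a word of length $2$ or $3$ must equal $\omega$, or must lie in the single component $S(\bw)$. So each identity needs to be checked only on assignments into one component together with $\omega$. On the graph components, identities such as \eqref{id3}, \eqref{id2}, \eqref{id5}, \eqref{id13}--\eqref{id15} restrict which edge patterns may coexist. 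The main work is confirming that, for example, \eqref{id3} kills triangles and \eqref{id2} kills paths of length $3$ which are not $\bp_3$-type.

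\textbf{Every finite nontrivial subdirectly irreducible member of $\mathcal{V}$ is listed.} Let $S$ be such a member. By Lemma~\ref{lem24121301} it is a flat semiring, and by \eqref{id1} it is $4$-nilpotent. By Lemma~\ref{nilpotentsi} it has a unique annihilator $\omega$.
\begin{itemize}
\item If $S^2=\{0\}$, then $S\cong S(a)$.
\item If $S^3=\{0\}\ne S^2$, Lemma~\ref{graphsemiring} writes $S$ as a $\{0,\omega\}$-direct union of path and cycle graph semirings. The identities then prune the possibilities. Identity \eqref{id4} forbids a loop coexisting with any other edge, which gives the $S_{\bc_1}$ case. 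Identity \eqref{id5} forces each $2$-cycle to stay isolated, and \eqref{id3} excludes $\bc_3$. Identity \eqref{id2} excludes paths of length at least $4$ and cycles $\bc_m$ with $m\ge 4$.
\item If $S^4=\{0\}\ne S^3$, Lemma~\ref{sk0w} gives $S^3=\{0,\omega\}$. Pick $a,b,c$ with $abc=\omega$. Identity \eqref{id6} forces all triples with product $\omega$ to coincide, so $\omega=abc$ is the unique such factorization up to the pattern of equalities among $a,b,c$. Hence $\langle a,b,c\rangle\cong S(\bw)$ for one $\bw\in\{a^3,a^2b,ab^2,aba,abc\}$.
\end{itemize}

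The remaining elements of the third case contribute products of length $2$ equal to $\omega$. Here \eqref{id11} and \eqref{id4} rule out squares, and \eqref{id7}--\eqref{id10} show that any extra edge touching $a,b,c$ (or $ab$, $bc$) either forces a product already inside $S(\bw)$ or is excluded. Identities \eqref{id12}--\eqref{id15} then determine the shape of the leftover graph part for each $\bw$:
\begin{itemize}
\item for $a^3$, $a^2b$ and $ab^2$, it is a disjoint union of $2$-cycles $S_c(a_ib_i)$;
\item for $abc$, it is a union of single edges $S(a_ib_i)$;
\item for $aba$, it is a union of $\bp_2$'s and $\bp_3$'s.
\end{itemize}
This last step is the main obstacle. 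The difficulty is in tracking which elements outside $\langle a,b,c\rangle$ (such as $ab$ or $ba$ acting as graph vertices) may form edges, and why $aba$ uniquely permits $\bp_3$ pieces, as suggested by Lemma~\ref{abap3}, where $ab,a,ba$ form a $\bp_3$. I would handle it by case analysis on $\bw$, using \eqref{id13}--\eqref{id15} to force edge symmetrization: these identities say that in the presence of $x^2y$, $xy^2$ or $xyz$, a path $x_1x_2+x_2x_3$ is reversible.

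\textbf{Reduction to $\mathcal{C}$.} Lemma~\ref{scnsln1} collapses multiple copies of a component to at most two copies, or to one copy for cycles. Lemma~\ref{w3a3aba} absorbs the $S_c(a_ib_i)$ or $S(a_ib_i)$ parts into $\mathsf{V}(S(\bw))$, and Lemma~\ref{abap3}(i) handles the $S(aba)$ case, with $\bp_2$ parts absorbed since $S_{\bp_2}$ is a subalgebra of $S_{\bp_3}$. Mixed unions such as $(\bigcup S_{\bp_2})\circ(\bigcup S_{\bp_3})$ are equivalent to $S_{\bp_3}$ or $S_{\bp_3}\circ S_{\bp_3}$ by Lemma~\ref{scnsln1}(ii).
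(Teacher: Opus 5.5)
Your skeleton is the paper's: split by nilpotency index, Lemma~\ref{graphsemiring} plus pruning in Case~2, the unique factorization $\omega=abc$ from \eqref{id6} in Case~3, then collapse to $\mathcal C$ via Lemmas~\ref{scnsln1}, \ref{w3a3aba}, \ref{abap3}. However, the step you call the main obstacle is not carried out, and the mechanism you sketch for it gives the wrong answer.

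\textbf{The leftover part for $a^2b$ and $ab^2$.}
\begin{itemize}
\item Identity \eqref{id13} only reverses a two-step path $x_1\to x_2\to x_3$ when some $x^2y=\omega$. It excludes $S_{\bp_3}$ next to $S(a^2b)$ but says nothing about isolated edges.
\item Identity \eqref{id15} does not reverse the path at all; it reverses $xyz$. That is why it kills $S_{\bp_3}$ next to $S(abc)$ (as $cba\neq\omega$) but not next to the palindrome $aba$.
\item What you are missing is \eqref{id5}. In $S(a^2b)\circ S_c(cd)$ the assignment $x\mapsto c$, $y\mapsto d$, $z\mapsto a$, $t\mapsto ab$ makes the left side $\omega$, while $tz=aba=0$. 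Dually, use $(z,t)\mapsto(ab,b)$ for $ab^2$.
\end{itemize}
So for $a^2b$ and $ab^2$ the leftover part is $\bigcup^{\omega}S(a_ib_i)$, not a union of $2$-cycles. This is exactly what survives the paper's exclusions by \eqref{id11}, \eqref{id5}, \eqref{id13} in Subcase~3.2. It is also what Lemma~\ref{w3a3aba}(i), the case $|c(\bw)|\ge 2$, places in $\mathsf V(S(a^2b))$. The $S_c(a_ib_i)$ in those two items of the printed list, and in the last sentence of Subcase~3.2, is a slip. By following it, your argument asserts $S(a^2b)\circ S_c(cd)\in\mathcal V$, which is false, and applies Lemma~\ref{w3a3aba} in a case it does not cover.

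\textbf{The decomposition behind ``leftover graph part'' must be proved.} The paper sets $T=(S\setminus\langle a,b,c\rangle)\cup\{0,\omega\}$ and argues as follows.
\begin{itemize}
\item The only pairs with $xy\notin\{0,\omega\}$ are the factor pairs inside $S(\bw)$. This uses $\omega\in S^1xyS^1$, $S^4=\{0\}$ and uniqueness of the $3$-factorization. Hence $T^2\subseteq\{0,\omega\}$.
\item $\langle a,b,c\rangle T=T\langle a,b,c\rangle=\{0\}$, using $0$-cancellativity and \eqref{id7}--\eqref{id10}. For example, $\omega=a^2c$ and $ct=\omega$ force $cac=\omega$ by \eqref{id7}.
\item Hence $T$ is subdirectly irreducible and $3$-nilpotent with $T^2\neq\{0\}$, so Case~2 applies to it. Only the algebras $S(\bw)\circ K$ with $K\in\{S_{\bc_1},S_{\bc_2},S_{\bp_2},S_{\bp_3}\}$ then need testing.
\end{itemize}
The same identities are needed even to get $\langle a,b,c\rangle\cong S(\bw)$. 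For $\omega=abc$, an extra relation $ba=\omega$ survives \eqref{id6} and $0$-cancellativity, and has to be excluded by an identity such as \eqref{id9}.

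\textbf{Your membership check rests on a false reduction.} Identities cannot be verified one component at a time, because different words of a sum may be evaluated in different components. Indeed, the failures of \eqref{id4} and \eqref{id5} you use in Case~2 come from exactly such mixed assignments. The paper instead checks the identities only on the thirteen algebras of $\mathcal C$. It then obtains each listed family through the equational equivalences of Lemmas~\ref{scnsln1}, \ref{w3a3aba}, \ref{abap3}. With the $a^2b$ and $ab^2$ items corrected, that route works.
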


\begin{proof}
Let $\mathcal{V}_{si}$ denote the class of all finite nontrivial subdirectly irreducible algebras in $\mathcal{V}$,
and let $\mathcal{S}$ denote the collection of algebras listed in Proposition~\ref{vfsi}.
The proof proceeds as follows.
We first show that every member of $\mathcal{C}$ lies in $\mathcal{V}_{si}$.
Next, we prove that every algebra in $\mathcal{S}$ is subdirectly irreducible and equationally equivalent to some algebra in $\mathcal{C}$.
This implies that every member of $\mathcal{S}$ lies in $\mathcal{V}_{si}$.
Finally, we show that every algebra in $\mathcal{V}_{si}$ is isomorphic to some algebra in $\mathcal{S}$;
this is the main body of the proof.
Once these three steps are established,
it will follow that, up to isomorphism, $\mathcal{V}_{si}$ coincides with $\mathcal{S}$.

Applying Lemma~~\ref{0wunion}, Proposition~\ref{swsi} and Lemma~\ref{graphsemiring},
one verifies that every algebra in $\mathcal{C}$ is subdirectly irreducible.
A routine check shows that every algebra in $\mathcal{C}$ satisfies \eqref{id1}--\eqref{id15}.
Thus every member of $\mathcal{C}$ lies in $\mathcal{V}_{si}$.

The same propositions also yield that
every algebra in $\mathcal{S}$ is subdirectly irreducible.
It remains to show that each of them is equationally equivalent to some algebra in $\mathcal{C}$.
This is immediate for $S(a)$ and $S_{\bc_1}$, the latter being isomorphic to $S(a^2)$.

For algebras of the form $\bigcup_{1\leq i\leq m}^{\omega} S_{\bc_2}$ with $m\geq 1$,
Lemma~\ref{scnsln1} gives $\mathsf{V}(S)=\mathsf{V}(S_c(ab))$.
Similarly, for algebras of the form
$\left(\bigcup_{1\leq i\leq m}^{\omega} S_{\bp_2}\right)\circ \left(\bigcup_{1\leq j\leq n}^{\omega} S_{\bp_3}\right)$
with $m,n\geq 0$ and $m+n>0$,
the same lemma yields that $\mathsf{V}(S)$ is one of
$\mathsf{V}(S(ab))$,
$\mathsf{V}(S(ab)\circ S(ab))$,
$\mathsf{V}(S_{\bp_3})$, or
$\mathsf{V}(S_{\bp_3}\circ S_{\bp_3})$.

For the remaining five types, Lemma~\ref{w3a3aba} gives the following reductions:
\[
 \begin{aligned}
\textstyle\mathsf{V}\left(S(a^3)\circ \bigcup_{1\leq i\leq m}^{\omega} S_c(a_i b_i)\right) &= \mathsf{V}\left(S(a^3)\right),\\
\textstyle\mathsf{V}\left(S(a^2b)\circ \bigcup_{1\leq i\leq m}^{\omega} S_c(a_i b_i)\right) &= \mathsf{V}\left(S(a^2b)\right),\\
\textstyle\mathsf{V}\left(S(ab^2)\circ \bigcup_{1\leq i\leq m}^{\omega} S_c(a_i b_i) \right)&= \mathsf{V}\left(S(ab^2)\right),\\
\textstyle\mathsf{V}\left(S(aba)\circ\left(\bigcup_{1\leq i\leq m}^{\omega} S_{\bp_2}\right)\circ \left(\bigcup_{1\leq j\leq n}^{\omega} S_{\bp_3}\right)\right) &= \mathsf{V}\left(S(aba)\circ S_{\bp_3}\right),\\
\textstyle\mathsf{V}\left(S(abc)\circ \bigcup_{1\leq i\leq m}^{\omega} S(ab)\right) &= \mathsf{V}\left(S(abc)\right).
\end{aligned}
\]
Thus every algebra in $\mathcal{S}$ is equationally equivalent to some algebra in $\mathcal{C}$.

Now
let $S$ be an arbitrary algebra in $\mathcal{V}_{si}$.
It remains to show that $S$ is isomorphic to some algebra in $\mathcal{S}$.
By Lemma~\ref{lem24121301} and Lemma~\ref{nilpotentsi}, $S$ is a flat semiring,
contains a unique annihilator $\omega$ and
$\{0,\omega\}$ is its least nonzero multiplicative ideal.
Since $S$ satisfies the identity~\eqref{id1}, it follows that $S$ is $4$-nilpotent; that is, $S^4=\{0\}$.
This leads naturally to the following three cases:

\textbf{Case 1.} $S^{2}=\{0\}$.
By Lemma~\ref{sk0w}, $S = \{0, \omega\}$, which is isomorphic to the algebra $S(a)$ in $\mathcal{S}$.

\textbf{Case 2.} $S^{2}\neq\{0\}$ and $S^{3}=\{0\}$.
Then $S$ is $3$-nilpotent, but is not $2$-nilpotent.
We shall show that
$S$ is isomorphic to one of the following algebras in $\mathcal{S}$:
\[
S_{\bc_1},\quad
\bigcup\nolimits_{1\leq i\leq m}^{\omega} S_{\bc_2} \ (m\geq 1),
\]
\[
\left(\bigcup\nolimits_{1\leq i\leq m}^{\omega} S_{\bp_2}\right)\circ \left(\bigcup\nolimits_{1\leq j\leq n}^{\omega} S_{\bp_3}\right)
\ (m,n\geq 0,\ m+n>0).
\]

Indeed, by Lemma~\ref{graphsemiring},
$S$ is isomorphic to the $\{0,\omega\}$-direct union of some flat semirings,
each of which is either a path graph semiring $S_{\mathbf{p}_{m}}$ or a cycle graph semiring $S_{\mathbf{c}_{n}}$.

We first exclude all but finitely many types of graph semirings from consideration.
It is easy to see that neither $S_{\mathbf{p}_{4}}$ nor $S_{\mathbf{c}_{4}}$ satisfy the identity~\eqref{id2},
and $S_{\mathbf{c}_{3}}$ does not satisfy the identity~\eqref{id3}.
Moreover, for every $n\geq 2$, the semiring $S_{\mathbf{p}_{n}}$ is a subalgebra of both $S_{\mathbf{p}_{n+1}}$ and $S_{\mathbf{c}_{n+1}}$.
Consequently, for all $m\geq 4$ and $n\geq 3$, neither $S_{\mathbf{p}_{m}}$ nor $S_{\mathbf{c}_{n}}$ can occur as a subsemiring of $S$.
Therefore, $S$ must be a $\{0,\omega\}$-direct union of semirings in the finite set
$\{S_{\mathbf{c}_{1}}, S_{\mathbf{c}_{2}}, S_{\mathbf{p}_{2}}, S_{\mathbf{p}_{3}}\}$.

Next, we further restrict the possible combinations of these components.
A direct verification shows that $S_{\bc_1}\circ S_{\bc_1}$, $S_{\bc_1}\circ S_{\bc_2}$, $S_{\bc_1}\circ S_{\bp_2}$,
and $S_{\bc_1}\circ S_{\bp_3}$ fail to satisfy \eqref{id4},
and that $S_{\bc_2}\circ S_{\bp_2}$ and $S_{\bc_2}\circ S_{\bp_3}$ fail to satisfy \eqref{id5}.
Thus, the only combinations that remain possible are those in which $S$ is
a single $S_{\mathbf{c}_{1}}$ component,
a $\{0,\omega\}$-direct union of $S_{\mathbf{c}_{2}}$ components,
or a $\{0,\omega\}$-direct union of components from $\{S_{\bp_{2}},S_{\bp_{3}}\}$.
This completes the analysis of Case 2.

\textbf{Case 3.} $S^{3}\neq\{0\}$ and $S^{4}=\{0\}$.
By Lemma~\ref{sk0w}, $S^{3}=\{0,\omega\}$.
Since $S$ satisfies the identity~\eqref{id6},
$\omega$ has a unique factorization into a product of three elements;
that is, there exist unique $a, b, c\in S$ such that
\[
\omega=abc.
\]
Note that these three elements are not necessarily distinct.
We now consider the following five subcases.
The first subcase is treated in full detail; the remaining four are analogous,
and we shall therefore omit the repetitive details, indicating only the differences.

\textbf{Subcase 3.1.} $a=b=c$. Then $\omega=a^3$.
We first claim that
\begin{equation}\label{aa=aa}
\{(x,y)\in S\times S \mid xy\in S\setminus\{0,\omega\}\}=\{(a,a)\}.
\end{equation}
Indeed, it is easy to see that $a^2 \notin \{0, \omega\}$.
Conversely, let $x, y\in S$ be such that $xy \in S\setminus\{0, \omega\}$.
Since $\omega \in S^1(xy)S^1$ and $S$ is $4$-nilpotent,
there exists $s\in S$ such that either $s\cdot xy=\omega$ or $xy\cdot s=\omega$.
By the uniqueness of decomposition of $\omega$, it follows immediately that $x = y = s = a$.
Hence \eqref{aa=aa} holds, and so $S^2=\{0,\omega,a^2\}$.

Let $\langle a\rangle$ denote the subsemiring of $S$ generated by $a$.
Then
\[
\langle a\rangle = \{0, \omega, a^2, a\}.
\]
If $S=\langle a\rangle$, then $S$ is isomorphic to the algebra $S(a^3)$ in $\mathcal{S}$.
Suppose now that $S\neq \langle a\rangle$.
Let $T$ denote the set $(S\setminus\langle a\rangle)\cup\{0,\omega\}$.
Then $T$ properly contains $\{0,\omega\}$ and coincides with $S\setminus\{a, a^2\}$. It follows that
$S=\langle a\rangle\cup T$ and $\langle a\rangle\cap T=\{0,\omega\}$.

We first show that $T$ is a subsemiring of $S$.
It is obvious that $T^2\subseteq S^2=\{0,\omega,a^2\}$.
By \eqref{aa=aa}, we have $a^2\notin T^2$, since $a\notin T$.
Consequently, $T^2\subseteq\{0,\omega\}\subseteq T$.
Since $S$ is a flat semiring, it now follows that
$T$ is a subsemiring of $S$.

Next, we prove that $\langle a\rangle\cdot T=\{0\}$.
It is obvious that $\langle a\rangle\cdot T\subseteq S^2=\{0,\omega,a^2\}$.
Since $a\notin T$, it follows from \eqref{aa=aa} that $a^2\notin \langle a\rangle\cdot T$.
If $\omega\in \langle a\rangle\cdot T$, then there exist $x\in\{a,a^2\}$ and $t\in T$ such that $xt=\omega=a^3$.
As $S$ is $0$-cancellative, this forces $t\in\{a,a^2\}$, contradicting $T\cap\langle a\rangle=\{0,\omega\}$.
Hence $\langle a\rangle\cdot T=\{0\}$.
By a symmetric argument, we also obtain $T\cdot\langle a\rangle=\{0\}$.

We have shown that $S$ is isomorphic to the $\{0,\omega\}$-direct union $S(a^3) \circ T$.
It remains to determine the structure of $T$ more precisely.

For any $t\in T\backslash \{0,\omega\}$, $\omega\in S^1tS^1$.
Since $a\notin T$, it follows from the uniqueness of decomposition of $\omega$
that there is $s\in S$ such that $st=\omega$ or $ts=\omega$.
Since $\langle a\rangle\cdot T = T \cdot  \langle a\rangle = \{0\}$,
it follows that $s\in S\backslash \{a,a^2\}=T$,
and so $\omega\in T^1tT^1$.
Consequently, $\omega$ is the unique annihilator of $T$.
Since $T^3\subseteq S^3=\{0, \omega\}$ and $\omega\notin T^3$,
it follows that $T^3=\{0\}$, and so $T$ is $3$-nilpotent.
Therefore, $T$ is subdirectly irreducible.
Notice also that $\{0,\omega\}$ is properly contained in $T$. By Lemma~\ref{sk0w}, $T^2\neq \{0\}$.
Thus $T$ satisfies precisely the hypotheses of Case 2, and so $T$ is isomorphic to one of the three types of algebras described there.

It remains to rule out the possibilities for $T$.
Since $S(a^3)\circ S(a^2)$ fails the identity~\eqref{id11} and
$S(a^3)\circ S(ab)$ fails the identity~\eqref{id12}, Case 2 forces
$T$ to be isomorphic to $\bigcup_{1\leq i\leq m}^{\omega} S_{c}(ab)$
for some $m \geq 1$.
Consequently, $S$ is isomorphic to the algebra
$S(a^3)\circ \bigcup_{1\leq i\leq m}^{\omega} S_{c}(a_i b_i)$
in $\mathcal{S}$.

\textbf{Subcase 3.2.} $a=b\neq c$. Then $\omega=a^2c$.
By an argument analogous to~\eqref{aa=aa} in Subcase 3.1, we obtain
\[
\{(x,y)\in S\times S \mid xy\in S\setminus\{0,\omega\}\}=\{(a,a),(a,c)\}.
\]
Consequently, $S^2=\{0, \omega, a^2, ac\}$.

Let $\langle a, c\rangle$ denote the subsemiring of $S$ generated by $a$ and $c$.
Then
\[
\langle a,c\rangle=\{0,\omega,a,c,a^2,ac\}.
\]
If $S=\langle a,c\rangle$, then $S$ is isomorphic to the algebra $S(a^2c)$ in $\mathcal{S}$.
Suppose now that $S\neq \langle a,c\rangle$.
Let $T$ denote the set $(S\setminus\langle a,c\rangle)\cup\{0,\omega\}$.
Then $T$ properly contains $\{0,\omega\}$, and $T=S\backslash \{a,c,a^2,ac\}$.
It follows that $S=\langle a,c\rangle\cup T$ and $\langle a,c\rangle\cap T=\{0,\omega\}$.
Moreover, one can easily verify that $T$ is a subsemiring of $S$.

We next prove that $\langle a,c\rangle\cdot T=\{0\}$.
Clearly, $\langle a,c\rangle\cdot T\subseteq S^2=\{0,\omega,a^2,ac\}$.
Since $a,c\notin T$, we have $a^2,ac\notin \langle a,c\rangle\cdot T$.
Suppose, for contradiction, that $\omega\in \langle a,c\rangle\cdot T$.
Then there exist $s\in\{a,c,a^2,ac\}$ and $t\in T$ such that $st=\omega$.
Because $S$ is $0$-cancellative and the uniqueness of decomposition of $\omega$,
we must have $s\notin\{a,a^2,ac\}$; hence $s=c$.
Thus $ct=\omega$, and consequently $a^2c+ct=\omega\neq 0$.
Since $S$ satisfies identity~\eqref{id7}, we obtain
\[
a^2c+ct+cac=\omega.
\]
It follows that $cac = \omega$, contradicting the uniqueness of decomposition for $\omega$.
Therefore $\langle a,c\rangle\cdot T=\{0\}$.
By a symmetric argument using identity~\eqref{id8}, we also obtain $T\cdot\langle a,c\rangle=\{0\}$.

We have shown that $S$ is isomorphic to the $\{0,\omega\}$-direct union $S(a^2c) \circ T$.
It remains to determine the structure of $T$ more precisely.

One can show that $T$ is a finite $3$-nilpotent flat semiring containing the unique annihilator $\omega$.
Consequently, $T$ is a finite subdirectly irreducible member of $\mathcal{V}$.
Notice that $T^2\neq \{0\}$ and $T^3= \{0\}$.
Thus $T$ satisfies precisely the hypotheses of Case 2, and so $T$ is isomorphic to one of the three types of algebras described there.

The algebras $S(a^2b)\circ S(c^2)$, $S(a^2b)\circ S_c(cd)$, and $S(a^2b)\circ S_{\bp_3}$
fail \eqref{id11}, \eqref{id5}, and \eqref{id13}, respectively.
Hence Case~2 forces $T$ to be isomorphic to $\bigcup_{1\leq i\leq m}^{\omega} S_c(a_i b_i)$
for some $m\geq 1$.
Consequently,
$S$ is isomorphic to the algebra $S(a^2b)\circ \bigcup_{1\leq i\leq m} S_c(a_i b_i)$ in $\mathcal{S}$.

\textbf{Subcase 3.3.} $b=c\neq a$. Then $\omega=ac^2$.
Note that the flat semirings $S(a^2c)$ and $S(ac^2)$ are anti-isomorphic.
By an argument dual to that used in Subcase 3.2, with the identity~\eqref{id13} replaced by its dual identity~\eqref{id14},
we obtain that $S$ is isomorphic to the algebra $S(ab^2)\circ \bigcup_{1\leq i\leq m}^{\omega} S_c(a_i b_i)$ for some $m\geq 0$.

\textbf{Subcase 3.4.} $a=c\neq b$. Then $\omega=aba$.
One can obtain
\[
\{(x,y)\in S\times S \mid xy\in S\setminus\{0,\omega\}\}=\{(a,a),(a,b),(b,a)\}.
\]
Consequently, $S^2=\{0,\omega,ab,ba\}$.

Let $\langle a, b\rangle$ denote the subsemiring of $S$ generated by $a$ and $b$.
Then
\[
\langle a,b\rangle=\{0,\omega,a,b,ab,ba\}.
\]
If $S=\langle a,b\rangle$, then $S$ is isomorphic to the algebra $S(aba)$ in $\mathcal{S}$.
Suppose now that $S\neq \langle a,b\rangle$.
Let $T$ denote the set $(S\setminus\langle a,b\rangle)\cup\{0,\omega\}$.
Then $T$ properly contains $\{0,\omega\}$.
Using the identities~\eqref{id9} and~\eqref{id10},
we obtain that $S$ is isomorphic to the $\{0,\omega\}$-direct union of $S(aba)$ and $T$.
Moreover, $T$ satisfies precisely the hypotheses of Case 2,
and so $T$ is isomorphic to one of the three types of algebras described there.

The algebras $S(aba)\circ S(a^2)$ and $S(aba)\circ S_c(ab)$ fail \eqref{id11} and \eqref{id5}, respectively.
Thus Case 2 forces $T$ to be isomorphic either to $\bigcup_{1\leq i\leq m}^{\omega} S_{\bp_2}$ for some $m \geq 1$,
or to $\left(\bigcup_{1\leq i\leq n}^{\omega} S_{\bp_2}\right)\circ \left(\bigcup_{1\leq j\leq m}^{\omega} S_{\bp_3}\right)$
for some $n \geq 0$ and $m \geq 1$.
Consequently,
$S$ is isomorphic to $S(aba)\circ \bigcup_{1\leq i\leq m}^{\omega} S(a_i b_i)$
or $S(aba) \circ \left(\bigcup_{1\leq i\leq n}^{\omega} S_{\bp_2}\right)\circ \left(\bigcup_{1\leq j\leq m}^{\omega} S_{\mathbf{p}_3}\right)$,
both of which belong to $\mathcal{S}$.

\textbf{Subcase 3.5.} $a, b, c$ are distinct. Then $\omega=abc$.
Let $\langle a,b,c\rangle$ denote the subsemiring of $S$ generated by $\{a, b, c\}$.
If $S=\langle a,b,c\rangle$, then $S$ is isomorphic to the algebra $S(abc)$ in $\mathcal{S}$.
Suppose now that $S\neq \langle a,b,c\rangle$.
Let $T$ denote the set $(S\setminus\langle a,b,c\rangle)\cup\{0,\omega\}$.
Then $T$ properly contains $\{0,\omega\}$.
It is readily verified that $T$ is a subsemiring of $S$.
Furthermore, using identities~\eqref{id7}--\eqref{id10},
we obtain that $S$ is the $\{0,\omega\}$-direct union $\langle a,b,c\rangle\circ T$.
Moreover, $T$ satisfies precisely the hypotheses of Case 2,
and so $T$ is isomorphic to one of the three types of algebras described there.

The algebras $S(abc)\circ S(d^2)$, $S(abc)\circ S_c(cd)$, and $S(abc)\circ S_{\bp_3}$
fail \eqref{id11}, \eqref{id5}, and \eqref{id15}, respectively.
Hence Case 2 forces $T$ to be isomorphic to $\bigcup_{1\leq i\leq m}^{\omega} S_{\bp_2}$ for some $m \geq 1$.
Consequently, $S$ is isomorphic to the algebra
$S(abc)\circ \bigcup_{1\leq i\leq m}^{\omega} S(a_i b_i)$
in $\mathcal{S}$.

This completes the proof that every algebra in $\mathcal{V}_{si}$ is isomorphic to some algebra in $\mathcal{S}$.
Together with the previous steps, this establishes that, up to isomorphism, $\mathcal{V}_{si} = \mathcal{S}$.
\end{proof}

\begin{corollary}\label{coro26071801}
The variety $\mathcal{V}$ is finitely generated.
More precisely, $\mathcal{V}$ is generated by $\mathcal{C}$.
\end{corollary}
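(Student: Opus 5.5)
The plan is to combine local finiteness with the classification of finite subdirectly irreducibles in Proposition~\ref{vfsi}. By Proposition~\ref{vlocal}, $\mathcal{V}$ is locally finite. A locally finite variety is generated by its finite subdirectly irreducible members. The reason is standard: every algebra in $\mathcal{V}$ is a homomorphic image of a free algebra, which is a subdirect product of finitely generated free algebras' images; more directly, $\mathcal{V}$ is generated by its finitely generated free algebras, which are finite. By Birkhoff's subdirect representation theorem, each finite algebra is a subdirect product of finite subdirectly irreducible quotients, and these quotients lie in $\mathcal{V}$. Hence
\[
\mathcal{V}=\mathsf{V}(\mathcal{V}_{si}),
\]
where $\mathcal{V}_{si}$ is the class of finite nontrivial subdirectly irreducible members. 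Trivial algebras contribute nothing.

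Next I would apply Proposition~\ref{vfsi}. Every member of $\mathcal{V}_{si}$ is equationally equivalent to some algebra in $\mathcal{C}$, so it lies in $\mathsf{V}(\mathcal{C})$. This gives $\mathcal{V}=\mathsf{V}(\mathcal{V}_{si})\subseteq\mathsf{V}(\mathcal{C})$. Conversely, each algebra in $\mathcal{C}$ is a member of $\mathcal{V}$, again by Proposition~\ref{vfsi}, so $\mathsf{V}(\mathcal{C})\subseteq\mathcal{V}$. Therefore $\mathcal{V}=\mathsf{V}(\mathcal{C})$.

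Since $\mathcal{C}$ is a finite set of finite algebras, $\mathsf{V}(\mathcal{C})$ is generated by the single finite algebra $\prod_{A\in\mathcal{C}}A$. Thus $\mathcal{V}$ is finitely generated. Alternatively, one can invoke Lemma~\ref{finsubf} to obtain a single finite flat generator.

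The only step needing any care is the justification that a locally finite variety is generated by its finite subdirectly irreducible members. I would cite this as the standard fact that a variety is generated by its finitely generated free algebras, combined with Birkhoff's theorem. Everything else is immediate from Proposition~\ref{vfsi}.
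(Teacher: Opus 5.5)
Your proof is correct and follows the paper's own argument: local finiteness (Proposition~\ref{vlocal}) means $\mathcal{V}$ is generated by its finite subdirectly irreducible members, and Proposition~\ref{vfsi} both reduces these to $\mathcal{C}$ up to equational equivalence and places $\mathcal{C}$ inside $\mathcal{V}$. Your first justification of the local-finiteness step is muddled, but the ``more directly'' version is correct and enough: the finitely generated free algebras of $\mathcal{V}$ are finite and generate $\mathcal{V}$, and by Birkhoff each of them is a subdirect product of finite subdirectly irreducible quotients.
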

\begin{proof}
By Proposition~\ref{vlocal}, $\mathcal{V}$ is locally finite and hence generated by its finite subdirectly irreducible members.
By Proposition~\ref{vfsi}, each of these generates the same variety as some algebra in the set $\mathcal{C}$,
and every algebra in $\mathcal{C}$ belongs to $\mathcal{V}$.
Consequently, $\mathcal{V}$ is generated by $\mathcal{C}$, which consists of finitely many finite algebras.
Therefore, $\mathcal{V}$ is finitely generated.
\end{proof}

\begin{corollary}\label{coro26071750}
The variety $\mathcal{V}$ has finitely many subvarieties.
\end{corollary}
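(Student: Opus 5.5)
Every subvariety $\mathcal{U}$ of $\mathcal{V}$ is again locally finite, by Proposition~\ref{vlocal}. Hence $\mathcal{U}$ is generated by its finite subdirectly irreducible members. These are among the finite nontrivial subdirectly irreducible members of $\mathcal{V}$, together possibly with the trivial algebra, which contributes nothing.

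By Proposition~\ref{vfsi}, each finite nontrivial subdirectly irreducible member $S$ of $\mathcal{V}$ is equationally equivalent to some algebra $C_S\in\mathcal{C}$, so $\mathsf{V}(S)=\mathsf{V}(C_S)$. Define
\[
\mathcal{C}_{\mathcal{U}}=\{C\in\mathcal{C}\mid C\in\mathcal{U}\}.
\]
Every algebra in $\mathcal{C}_{\mathcal{U}}$ lies in $\mathcal{U}$. Conversely, each finite nontrivial subdirectly irreducible $S\in\mathcal{U}$ satisfies $\mathsf{V}(S)=\mathsf{V}(C_S)\subseteq\mathcal{U}$, so $C_S\in\mathcal{C}_{\mathcal{U}}$ and $S\in\mathsf{V}(\mathcal{C}_{\mathcal{U}})$.

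It follows that $\mathcal{U}=\mathsf{V}(\mathcal{C}_{\mathcal{U}})$, with the convention that $\mathsf{V}(\emptyset)$ is the trivial variety. So the assignment $\mathcal{U}\mapsto\mathcal{C}_{\mathcal{U}}$ is injective from the lattice of subvarieties of $\mathcal{V}$ into the power set of $\mathcal{C}$. Since $|\mathcal{C}|=13$, $\mathcal{V}$ has at most $2^{13}$ subvarieties.

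The only point that needs care is the claim that a subvariety of a locally finite variety is generated by its finite subdirectly irreducible members. This holds because every member of a locally finite variety is a directed union of finite subalgebras, and each finite algebra is a subdirect product of finite subdirectly irreducible algebras lying in the same variety. Both facts are standard, and with them the argument is immediate.
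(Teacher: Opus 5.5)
Your proposal is correct and follows essentially the same route as the paper: every subvariety is locally finite, hence generated by its finite subdirectly irreducible members, and Proposition~\ref{vfsi} then shows that it is generated by a subset of the $13$-element set $\mathcal{C}$. Your explicit map $\mathcal{U}\mapsto\mathcal{C}_{\mathcal{U}}$, with the identity $\mathcal{U}=\mathsf{V}(\mathcal{C}_{\mathcal{U}})$ proving injectivity, spells out the paper's counting step a little more carefully, including the trivial variety.
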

\begin{proof}
By Proposition~\ref{vlocal}, $\mathcal{V}$ is locally finite.
Let $\mathcal{W}$ be an arbitrary subvariety of $\mathcal{V}$.
Then $\mathcal{W}$ is also locally finite, and hence is generated by its finite subdirectly irreducible members.
Each such member of $\mathcal{W}$ is also a finite subdirectly irreducible member of $\mathcal{V}$.
By Proposition~\ref{vfsi}, each of these generates the same variety as some algebra in $\mathcal{C}$.
Therefore, $\mathcal{W}$ is generated by a subset of $\mathcal{C}$.
Since $\mathcal{C}$ contains $13$ algebras,
$\mathcal{V}$ has at most $2^{13}$ subvarieties.
\end{proof}

\begin{proposition}\label{coro26072850}
The variety $\mathcal{V}$ is a Cross variety.
\end{proposition}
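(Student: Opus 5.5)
This is immediate from the three preceding results, by checking the three defining conditions of a Cross variety directly.

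First, $\mathcal{V}$ is finitely based. This is Proposition~\ref{pro26071720}: $\mathcal{V}$ is defined within $\mathbf{F}$ by the finitely many identities \eqref{id1}--\eqref{id6}, and $\mathbf{F}$ is finitely based by Lemma~\ref{lem24121301}.

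Second, $\mathcal{V}$ is finitely generated. This is Corollary~\ref{coro26071801}: $\mathcal{V}=\mathsf{V}(\mathcal{C})$, and $\mathcal{C}$ consists of thirteen finite algebras. Their direct product is a single finite algebra that generates $\mathcal{V}$.

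Third, $\mathcal{V}$ has only finitely many subvarieties. This is Corollary~\ref{coro26071750}, which gives at most $2^{13}$ of them.

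The three conditions together are exactly the definition of a Cross variety, so the statement follows. There is no real obstacle here. All the substantive work is in Proposition~\ref{vfsi}, namely the classification of the finite subdirectly irreducible members and the reduction of each to an algebra in $\mathcal{C}$, together with the local finiteness from Proposition~\ref{vlocal}. The only point to note is that a finitely generated variety may be taken to be generated by one finite algebra, by passing to the direct product of the finitely many generators. The conclusion that $\mathcal{V}=\mathcal{W}_3$, and hence that $\mathcal{W}_3$ is Cross, is a separate step that the paper handles afterwards.
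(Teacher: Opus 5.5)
Your proposal is correct and matches the paper's proof, which likewise cites Proposition~\ref{pro26071720} together with Corollaries~\ref{coro26071801} and~\ref{coro26071750} to verify the three defining conditions. Your remark that finitely many finite generators can be replaced by their direct product is a harmless clarification.
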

\begin{proof}
This follows immediately from Proposition~\ref{pro26071720} together with Corollaries~\ref{coro26071801} and~\ref{coro26071750}.
\end{proof}

We now arrive at the main theorem of this section.
\begin{theorem}\label{thm26071601}
The variety $\mathcal{W}_3$ is a Cross variety.
\end{theorem}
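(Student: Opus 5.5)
The plan is to show that $\mathcal{V}=\mathcal{W}_3$. Proposition~\ref{coro26072850} then gives the theorem at once.

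One inclusion is $\mathcal{W}_3\subseteq\mathcal{V}$. Since $\mathcal{W}_3$ is generated by $S(a^3)$, $S(a^2b)$, $S(ab^2)$, $S(aba)$ and $S(abc)$, it suffices that these five algebras satisfy \eqref{id1}--\eqref{id15}. They all belong to $\mathcal{C}$, and the proof of Proposition~\ref{vfsi} already checked that every member of $\mathcal{C}$ satisfies these identities, so this inclusion is immediate.

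The reverse inclusion is $\mathcal{V}\subseteq\mathcal{W}_3$. By Corollary~\ref{coro26071801}, $\mathcal{V}$ is generated by $\mathcal{C}$, so I only need each algebra in $\mathcal{C}$ to lie in $\mathcal{W}_3$. I would go through the list as follows.
\begin{itemize}
\item $S(a)$, $S(a^2)$, $S(ab)$, $S(a^3)$, $S(a^2b)$, $S(ab^2)$, $S(aba)$, $S(abc)$ have the form $S(\bw)$ with $\bw\in W_3^{\leq}$, so they lie in $\mathcal{W}_3$ by Corollary~\ref{wwi}.
\item $S_c(ab)$: by Lemma~\ref{w3a3aba}(ii) with $m=1$, $S(a^3)\circ S_c(a_1b_1)\in\mathsf V(S(a^3))$. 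Its subalgebra $S_c(a_1b_1)$ (the subset $\{0,\omega,a_1,b_1\}$ is closed under the operations) therefore lies in $\mathcal{W}_3$.
\item $S(ab)\circ S(ab)$: by Lemma~\ref{w3a3aba}(i) applied to $\bw=abc$ with $m=2$, the algebra $S(abc)\circ S(a_1b_1)\circ S(a_2b_2)$ lies in $\mathcal{W}_3$. By the subalgebra lemma for $\{0,\omega\}$-direct unions (\cite[Proposition 1.12]{gr}, taking $J$ to be the two $S(ab)$ components), $S(ab)\circ S(ab)$ is a subalgebra of it.
\item $S(aba)\circ S_{\bp_3}$: Lemma~\ref{abap3}(ii) places it in $\mathsf V(S(a^3),S(aba))\subseteq\mathcal{W}_3$.
\item $S_{\bp_3}$ and $S_{\bp_3}\circ S_{\bp_3}$: Lemma~\ref{abap3}(i) with $m=2$ gives $S(aba)\circ S_{\bp_3}\circ S_{\bp_3}\in\mathsf V(S(aba)\circ S_{\bp_3})\subseteq\mathcal{W}_3$. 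Dropping the $S(aba)$ component via the same subalgebra lemma yields $S_{\bp_3}\circ S_{\bp_3}$, and keeping a single component yields $S_{\bp_3}$.
\end{itemize}
Hence $\mathcal{C}\subseteq\mathcal{W}_3$, so $\mathcal{V}\subseteq\mathcal{W}_3$, and equality holds.

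The main obstacle is the bookkeeping in the last two items. I must make sure that deleting components of a $\{0,\omega\}$-direct union really produces a subalgebra isomorphic to the intended algebra. In particular, the set $\{0,\omega,a_1,a_2,a_3\}$ of an $S_{\bp_3}$ component has to be closed under multiplication inside the union. This holds because products between different components are $0$.

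The remaining ingredient is Lemma~\ref{abap3}(ii), whose proof is given earlier, so it can be cited. With equality established, $\mathcal{W}_3$ inherits the Cross property from $\mathcal{V}$.
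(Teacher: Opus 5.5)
Your proposal is correct and follows the paper's proof. As there, you get $\mathcal W_3\subseteq\mathcal V$ because the five generators satisfy \eqref{id1}--\eqref{id15}, and $\mathcal V\subseteq\mathcal W_3$ because $\mathcal V=\mathsf V(\mathcal C)$ and each member of $\mathcal C$ lies in $\mathcal W_3$, with Lemma~\ref{abap3}(ii) as the key input. Your member-by-member check is in fact more careful than the paper's. The paper asserts that every member of $\mathcal C$ not of the form $S(\bw)$ lies in $\mathsf V(S(aba)\circ S_{\bp_3})$, but this fails for $S_c(ab)$: in $S(aba)\circ S_{\bp_3}$ the term $xy+yx$ always evaluates to $0$, whereas $ab+ba=ab\neq 0$ in $S_c(ab)$. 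Your appeal to Lemma~\ref{w3a3aba}(ii) (or Lemma~\ref{lemma26072701} with $k=2$) is exactly what closes that case.
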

\begin{proof}
By Proposition~\ref{coro26072850},
it remains to prove that $\mathcal{W}_3 = \mathcal{V}$.
Indeed, it is easy to check that every flat semiring in the set
\[
\{S(a^3),\; S(a^2b),\; S(ab^2),\; S(aba),\; S(abc)\}
\]
satisfies the identities \eqref{id1}--\eqref{id15},
which define $\mathcal{V}$ as a subvariety of $\mathbf{F}$.
Since $\mathcal{W}_3$ can be generated by these five flat semirings,
it follows immediately that $\mathcal{W}_3$ is a subvariety of $\mathcal{V}$.
For the reverse inclusion, by Corollary~\ref{coro26071801},
it suffices to show that every algebra in $\mathcal{C}$ lies in $\mathcal{W}_3$.

Examining the members of $\mathcal{C}$, we see that every member of $\mathcal{C}$ either is isomorphic to $S(\mathbf{w})$ for some $\mathbf{w}$ with $\ell(\mathbf{w})\leq 3$, or lies in $\mathsf{V}(S(aba)\circ S_{\mathbf{p}_3})$.
The former are clearly in $\mathcal{W}_3$.
For the latter, by Lemma~\ref{abap3},
we have $S(aba)\circ S_{\mathbf{p}_3}$ belong to $\mathsf{V}(S(a^3),S(aba))$,
which is a subvariety of $\mathcal{W}_3$.
Hence every algebra in $\mathcal{C}$ lies in $\mathcal{W}_3$.
Since $\mathcal{V}$ is generated by $\mathcal{C}$, $\mathcal{V}$ is a subvariety of $\mathcal{W}_3$.
Therefore, $\mathcal{W}_3=\mathcal{V}$, and consequently $\mathcal{W}_3$ is a Cross variety.
\end{proof}

\begin{corollary}\label{4swfb}
Let $W$ be a nonempty set of words in $X^+$.
If every word in $W$ has length at most $3$,
then the variety $\mathsf{V}(S(W))$ is a Cross variety.
In particular, $S(W)$ is finitely based.
\end{corollary}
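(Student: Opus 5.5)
The plan is to derive Corollary~\ref{4swfb} directly from Theorem~\ref{thm26071601}, using the general fact recalled at the start of Section~\ref{sec:w3}: every subvariety of a Cross variety is itself a Cross variety (\cite[Proposition 1.4.35]{sapir14}).

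\textbf{Step 1.} Since every word in $W$ has length at most $3$, Proposition~\ref{knsw} with $k=3$ shows that $\mathsf{V}(S(W))$ is a (finitely generated) subvariety of $\mathcal{W}_3$. Concretely, $W\subseteq W_3^{\leq}$ and $S(W_3)=S(W_3^{\leq})$, so Corollary~\ref{coro26071601} combined with Corollary~\ref{wwi} gives the inclusion $\mathsf{V}(S(W))\subseteq\mathcal{W}_3$.

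\textbf{Step 2.} By Theorem~\ref{thm26071601}, $\mathcal{W}_3$ is a Cross variety. Hence its subvariety $\mathsf{V}(S(W))$ is a Cross variety. If one prefers to avoid the external reference, this can be checked directly from the three defining properties.
\begin{itemize}
\item \emph{Finitely many subvarieties.} Subvarieties of $\mathsf{V}(S(W))$ are subvarieties of $\mathcal{V}=\mathcal{W}_3$, and Corollary~\ref{coro26071750} shows there are at most $2^{13}$ of these.
\item \emph{Finitely generated.} This is Proposition~\ref{nilp}; alternatively, as in the proof of Corollary~\ref{coro26071750}, $\mathsf{V}(S(W))$ is generated by a subset of $\mathcal{C}$.
\item \emph{Finitely based.} Here one uses the standard fact that in a variety with finitely many subvarieties, every subvariety of a finitely based variety is finitely based relative to it. Take any basis $\Sigma$ of the identities of $\mathsf{V}(S(W))$. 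The finite partial subsets of $\Sigma$, each added to the finite basis of $\mathcal{V}$, define a descending chain of subvarieties of $\mathcal{V}$. Since $\mathcal{V}$ has only finitely many subvarieties, this chain stabilizes. Because the intersection of the chain is $\mathsf{V}(S(W))$, the stable term equals $\mathsf{V}(S(W))$, and so a finite subset of $\Sigma$ together with the basis of $\mathcal{V}$ already defines it.
\end{itemize}

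\textbf{Step 3.} Finally, $S(W)$ is finitely based by definition, since its variety $\mathsf{V}(S(W))$ is. No step here is a genuine obstacle. The only point requiring care is Step 1, which relies on the already established containment $\mathsf{V}(S(W))\le\mathcal{W}_3$. The substantive work was done in Theorem~\ref{thm26071601}.
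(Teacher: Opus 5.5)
Your proposal is correct and follows the paper's own proof: Proposition~\ref{knsw} places $\mathsf{V}(S(W))$ inside $\mathcal{W}_3$, and Theorem~\ref{thm26071601}, together with the fact that every subvariety of a Cross variety is Cross, gives the result. Your optional direct verification of the three Cross properties is also sound, though not needed. For the finite-basis part, a cleaner formulation is that the varieties defined by finite subsets of $\Sigma$ together with the basis of $\mathcal{V}$ form a finite, downward-directed family, so it has a least member, and that member equals $\mathsf{V}(S(W))$.
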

\begin{proof}
Suppose that every word in $W$ has length at most $3$.
By Proposition~\ref{knsw}, $\mathsf{V}(S(W))$ is a subvariety of $\mathcal{W}_3$.
Since every subvariety of a Cross variety is again a Cross variety,
Theorem~\ref{thm26071601} implies that $\mathsf{V}(S(W))$ is a Cross variety.
Therefore, $S(W)$ is finitely based.
\end{proof}

\begin{corollary}\label{coro2026072610}
Let $1\leq k \leq 3$ be an integer. Then the variety $\mathcal{W}_k$ is finitely based.
\end{corollary}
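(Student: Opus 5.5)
The plan is to deduce this directly from Corollary~\ref{4swfb}. Fix $k$ with $1\leq k\leq 3$. By definition, $\mathcal{W}_k=\mathsf{V}(S(W_k))$, where $W_k$ is the set of all words of length exactly $k$. Every word in $W_k$ has length $k\leq 3$, so $W_k$ is a nonempty set of words satisfying the hypothesis of Corollary~\ref{4swfb}.

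That corollary then says $\mathsf{V}(S(W_k))$ is a Cross variety, and in particular $S(W_k)$ is finitely based. Hence $\mathcal{W}_k$ is finitely based, and is in fact a Cross variety.

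An alternative route uses the strictly ascending chain~\eqref{chain}, which gives $\mathcal{W}_k\leq\mathcal{W}_3$ for $k\leq 3$. Theorem~\ref{thm26071601} says $\mathcal{W}_3$ is a Cross variety. Every subvariety of a Cross variety is again a Cross variety by \cite[Proposition 1.4.35]{sapir14}, so $\mathcal{W}_k$ is a Cross variety and hence finitely based.

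There is no genuine obstacle: all the substantive work is in Theorem~\ref{thm26071601}, and this corollary is just the specialization $W=W_k$.
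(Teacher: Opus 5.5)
Your proposal is correct and matches the paper, which also obtains this as a direct specialization of Corollary~\ref{4swfb} to $W=W_k$. Your alternative route via $\mathcal{W}_k\leq\mathcal{W}_3$ and Theorem~\ref{thm26071601} is equally valid; it is essentially the argument inside the proof of Corollary~\ref{4swfb} itself.
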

\begin{proof}
This is a direct consequence of Corollary~\ref{4swfb}.
\end{proof}

\begin{remark}
We note in passing that $\mathcal{W}_3$ can be generated by $S(a^3)$, $S(a^2b)$, $S(ab^2)$, and $S(aba)$;
indeed, $S(abc) \in \mathsf{V}(S(a^2b), S(ab^2))$.
We do not include the proof here, as it is not needed for the main results of this paper.
\end{remark}

\section{A sufficient condition for the nonfinitely based property}\label{sec:NFB}
In this section, we use the hypergraph semiring approach to establish a sufficient condition under which an ai-semiring variety is nonfinitely based.
As an application, we show that certain flat semirings of the form $S(W)$ are nonfinitely based.

Following Jackson et al.~\cite{jrz}, we recall the notion of $k$-uniform hypergraphs. 
Let $k \geq 3$ be an integer.
A \emph{$k$-uniform hypergraph} $\mathbb{H}$ is a pair $\langle V, E\rangle$, where $E$ is a family of $k$-element subsets of a set $V$. Each element of $V$ is a \emph{vertex} of $\mathbb{H}$, and each element of $E$ is a \emph{hyperedge} of $\mathbb{H}$.

Let $\mathbb{H}$ be a $k$-uniform hypergraph, and let $n\geq 2$ be a positive integer. Then $\mathbb{H}$ is \emph{$n$-colourable} if there exists a mapping $\varphi: V \to \{1,2,\ldots,n\}$ such that $|\varphi(e)| \geq 2$ for all $e \in E$ (i.e., no hyperedge is monochromatic).
A \emph{cycle} of $\mathbb{H}$ is an alternating sequence $v_1, e_1, v_2, e_2, \ldots, v_n, e_n$ of
distinct vertices and hyperedges such that $v_1 \in e_1 \cap e_n$ and $v_{i+1} \in e_i \cap e_{i+1}$ for $1 \leq i < n$.
The length of this cycle is $n$.
The \emph{girth} of $\mathbb{H}$, denoted by $g(\mathbb{H})$, is the length of its shortest cycles.
If $\mathbb{H}$ contains no cycles, then $\mathbb{H}$ is called a \emph{hyperforest}, and we set $g(\mathbb{H}) = \infty$.

The following result is due to Erd\H{o}s and Hajnal~\cite{eh}; see Theorems 2.6 and 2.7 of~\cite{hj} for further discussion.

\begin{lemma}\label{kml}
For any integers $k,m,\ell \geq 2$, there exists a $k$-uniform hypergraph $\mathbb{H}$ such that $g(\mathbb{H}) \geq \ell$ and $\mathbb{H}$ is not $m$-colourable.
\end{lemma}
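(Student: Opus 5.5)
This is the classical Erd\H{o}s--Hajnal theorem. The paper quotes it from \cite{eh}, so in context it can simply be cited. If a self-contained argument is wanted, I would use the probabilistic deletion method. Fix $k,m,\ell\geq 2$. Choose a constant $\theta$ with $0<\theta<1/\ell$, a large integer $n$, and put $p=n^{1-k+\theta}$. Let $\mathbb{H}_0$ be the random $k$-uniform hypergraph on $V=\{1,\dots,n\}$ in which each $k$-subset is a hyperedge independently with probability $p$. Cycles are understood to have length at least $2$; a cycle of length $2$ means two hyperedges sharing at least two vertices.

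\textbf{Step 1: few short cycles.} A cycle of length $j$ with $2\leq j<\ell$ is determined by the following data:
\begin{itemize}
\item an ordered choice of $j$ distinct vertices $v_1,\dots,v_j$, giving at most $n^j$ choices;
\item for each $i$, a hyperedge $e_i$ containing the two prescribed vertices $v_i,v_{i+1}$ (indices mod $j$), giving at most $n^{k-2}$ choices for each $e_i$.
\end{itemize}
Hence the expected number of such cycles is at most
\[
n^j\,(n^{k-2})^j\,p^j=n^{j\theta}.
\]
Summing over $j<\ell$ gives an expected count of $O(n^{(\ell-1)\theta})=o(n)$, since $(\ell-1)\theta<1$. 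By Markov's inequality, with probability greater than $1/2$ the number of cycles of length less than $\ell$ is at most $n/2$ for large $n$.

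\textbf{Step 2: no large independent sets.} Let $t=\lceil n/(2m)\rceil$. A fixed $t$-set contains no hyperedge with probability
\[
(1-p)^{\binom{t}{k}}\leq \exp\Bigl(-p\binom{t}{k}\Bigr)\leq \exp\bigl(-c\,n^{1+\theta}\bigr)
\]
for some constant $c=c(k,m)>0$. There are at most $2^n$ such sets, so a union bound shows that with probability tending to $1$ every $t$-subset of $V$ contains a hyperedge. Consequently, for large $n$ there is an outcome $\mathbb{H}_0$ for which both the conclusion of Step 1 and that of Step 2 hold.

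\textbf{Step 3: deletion.} From each cycle of length less than $\ell$ in this $\mathbb{H}_0$, delete one vertex. Let $\mathbb{H}$ be the induced subhypergraph on the remaining set $V'$, keeping exactly the hyperedges contained in $V'$. Then $|V'|\geq n/2$. Every short cycle of $\mathbb{H}_0$ has lost a vertex, and $\mathbb{H}$ creates no new cycles, so $g(\mathbb{H})\geq \ell$. Now take any map $\varphi\colon V'\to\{1,\dots,m\}$. Some colour class has size at least $|V'|/m\geq n/(2m)$, hence contains a $t$-subset of $V$. By Step 2 that subset contains a hyperedge of $\mathbb{H}_0$. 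This hyperedge lies inside $V'$, so it is a hyperedge of $\mathbb{H}$, and it is monochromatic. Thus $\mathbb{H}$ is not $m$-colourable.

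The only delicate point is the balance in choosing $p$. It must be small enough that the expected number of short cycles, $n^{j\theta}$, is $o(n)$, which is why $\theta<1/\ell$ is needed. It must also be large enough that $p\,t^k\asymp n^{1+\theta}$ beats the $2^n$ term in the union bound, which needs only $\theta>0$. Both requirements hold simultaneously. Finally, in Step 1 one should check that the cycle count correctly covers the length-$2$ case, which matches the paper's definition of a cycle as an alternating sequence of distinct vertices and hyperedges.
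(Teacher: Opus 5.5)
Your argument is correct. It is the standard probabilistic deletion proof behind the Erd\H{o}s--Hajnal theorem, and the paper gives no proof of its own, simply citing \cite{eh} (see also \cite{hj}), so citing the result, as you first suggest, is exactly what the paper does. Your convention that cycles have length at least $2$ is also the right reading: under the paper's literal definition, $v_1,e_1$ would be a cycle of length $1$ in every hypergraph with a hyperedge, and the lemma would fail for every $\ell\geq 2$.
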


Henceforth in this section, every hypergraph $\mathbb{H}$ under consideration is $k$-uniform for some $k \geq 3$,
has no isolated vertices (i.e., each vertex belongs to at least one hyperedge), and satisfies $g(\mathbb{H}) \geq 4$.

Notice that the condition $g(\mathbb{H}) \geq 4$ ensures that any two distinct hyperedges of $\mathbb{H}$ intersect in at most one vertex,
and that for all $1<\ell \leq k$, a set $\{v_1,\ldots,v_{\ell}\}$ is a subhyperedge (that is, a subset of some hyperedge) if and only if every $2$-element subset of $\{v_1,\ldots,v_{\ell}\}$ is a subhyperedge (see~\cite[Lemma 3.2]{jrz}).

We now introduce two concepts that are intimately connected with a hypergraph $\mathbb{H}$:
the hypergraph semiring $S_{\mathbb{H}}$ and the associated polynomial $\bt_{\mathbb{H}}$.

A \emph{hypergraph semiring} $S_{\mathbb{H}}$ defined by $\mathbb{H}$ is a flat semiring generated by a copy $\{\mathbf{a}_v \mid v \in V\}$ of $V$, along with a special element $0$, and subject to the following rules:
\begin{itemize}
\item[(1)] $0$ is the multiplicative zero element;
\item[(2)] $\mathbf{a}_u \mathbf{a}_v = \mathbf{a}_v \mathbf{a}_u$ for all $u, v \in V$;
\item[(3)] $\mathbf{a}_{u_1} \mathbf{a}_{u_2} \cdots \mathbf{a}_{u_k} = \mathbf{a}_{v_1} \mathbf{a}_{v_2} \cdots \mathbf{a}_{v_k}$ whenever $\{u_1, \ldots, u_k\}, \{v_1, \ldots, v_k\} \in E$;
\item[(4)] $\mathbf{a}_{u_1} \mathbf{a}_{u_2} \cdots \mathbf{a}_{u_{k-1}} = \mathbf{a}_{v_1} \mathbf{a}_{v_2} \cdots \mathbf{a}_{v_{k-1}}$ if there exists $v \in V$ such that
    \[\{u_1, \ldots, u_{k-1}, v\}, \{v_1, \ldots, v_{k-1}, v\} \in E.\]
\end{itemize}
We let $\ba$ denote the common value of the products in item (3).
From~\cite[Lemma 3.4]{jrz}, we know that $S_{\mathbb{H}}$ is an exactly $(k+1)$-nilpotent flat semiring with the unique annihilator $\ba$.

The polynomial $\bt_{\mathbb H}$ associated with $\mathbb{H}$ is defined as follows.
Let $\{x_v \mid v \in V\}$ be a set of variables in bijection with $V$, and set
\[
\bt_{\mathbb H} = \sum_{\{v_1,v_2,\ldots,v_k\} \in E} x_{v_1} x_{v_2} \cdots x_{v_k}.
\]
A product \(x_{v_1} x_{v_2} \cdots x_{v_k}\) with \(\{v_1, v_2, \ldots, v_k\} \in E\) will be called a \emph{hyperedge product};
since a hyperedge is an unordered $k$-element subset, it gives rise to $k!$ distinct hyperedge products.
A word over $\{x_v \mid v \in V\}$ is called a \emph{non-hyperedge word}
if its evaluation under the mapping $x_v \mapsto \mathbf{a}_v$ is not $\ba$.

The following result is a modification of \cite[Theorem 2.2]{gjrz}, adapted to the present setting. The original theorem has found applications in a number of contexts (see~\cite{aj,gjrz2}), and the variant we give here provides a sufficient condition for the property of being nonfinitely based for an ai-semiring variety.

To state this result, we first recall the required hypergraph-theoretic setup.
For each $k\geq 3$, we fix a family of $k$-uniform hypergraphs $(\mathbb{H}_n)_{n \geq 1}$,
where $\mathbb{H}_n = \langle V(\mathbb{H}_n), E(\mathbb{H}_n) \rangle$,
such that for each $n \geq 1$,
$\mathbb{H}_n$ is not $2$-colourable and has girth greater than $\binom{kn}{2}$.
The existence of such hypergraphs is guaranteed by Lemma~\ref{kml}.

\begin{proposition}\label{thnfb}
Let $\mathcal{V}$ be an ai-semiring variety that contains $S_c(a_1 a_2 \cdots a_k)$ for some $k\geq 3$,
and for $n \geq 2$ let $\mathbf{w}_n$ be a non-hyperedge word for $\mathbb{H}_n$. If $\mathcal{V}$ satisfies the identities
\begin{equation}\label{eqwn}
\bt_{\mathbb H_n} \approx \bt_{\mathbb H_n} + \mathbf{w}_n
\end{equation}
for all $n \geq 2$, then $\mathcal{V}$ has no finite basis for its equational theory.
\end{proposition}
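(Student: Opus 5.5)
The plan follows the standard hypergraph-semiring compactness argument of \cite[Theorem 2.2]{gjrz}. Suppose, for contradiction, that $\mathcal{V}$ has a finite basis $\Sigma$. Let $N$ bound the number of variables in any identity of $\Sigma$. Pick $n$ with $n > N$, in fact large compared to $N$. Since $\mathcal{V}$ satisfies \eqref{eqwn}, the identity $\bt_{\mathbb H_n}\approx \bt_{\mathbb H_n}+\mathbf{w}_n$ is derivable from $\Sigma$. It therefore suffices to construct an ai-semiring $A_n$ that satisfies every identity of $\mathcal{V}$ in at most $N$ variables (hence all of $\Sigma$) but fails $\bt_{\mathbb H_n}\approx\bt_{\mathbb H_n}+\mathbf{w}_n$. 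Then $A_n\models\Sigma$ while $A_n\notin\mathcal{V}$, a contradiction.

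The natural candidate is $A_n=S_{\mathbb H_n}$. First, $S_{\mathbb H_n}$ fails the identity: under $x_v\mapsto\ba_v$ every hyperedge product evaluates to $\ba$, so $\bt_{\mathbb H_n}\mapsto\ba$. The word $\mathbf{w}_n$ is by definition non-hyperedge, so it evaluates to some element other than $\ba$, and by flatness $\ba+(\text{that element})=0\neq\ba$.

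Second, I must show that $S_{\mathbb H_n}$ satisfies every $N$-variable identity of $\mathcal{V}$. Take such an identity $\bp\approx\bp+\bq$ and an assignment $\varphi$ into $S_{\mathbb H_n}$ with $\varphi(\bp)\neq 0$; the goal is $\varphi(\bq)=\varphi(\bp)$. The image of $\varphi$ involves at most $N$ elements, each a product of at most $k$ generators. So the generators actually used span a set $U$ of at most $kN$ vertices. The girth hypothesis $g(\mathbb H_n)>\binom{kn}{2}\ge\binom{kN}{2}$ forces the subhypergraph induced on $U$, together with the hyperedges relevant to products in $\varphi(\bp)$, to be a hyperforest.

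The key lemma to prove is then that the subsemiring of $S_{\mathbb H_n}$ generated by the images of $\varphi$ (or an ideal quotient of it that still separates $\varphi(\bp)$ from $0$) lies in $\mathsf V(S_c(a_1\cdots a_k))\subseteq\mathcal V$. Equivalently, the relevant values can be realized by an assignment into a direct power of $S_c(a_1\cdots a_k)$ whose product projects correctly. For a single hyperedge this is immediate, since the hyperedge semiring restricted to one edge is exactly $S_c(a_1\cdots a_k)$. For a hyperforest, the plan is to produce one coordinate per hyperedge: for a hyperedge $e$, map $\ba_v\mapsto a_i$ for the $i$th vertex of $e$, and send vertices outside $e$ to elements adjusted along the tree structure so that rules (3) and (4) are respected. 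Acyclicity is what allows these choices to be made consistently.

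Granting this lemma, $\varphi$ factors through a member of $\mathcal V$, so $\varphi(\bq)=\varphi(\bp)$ because the identity holds in $\mathcal V$. This is essentially the argument of \cite[Lemma 3.5--Theorem 3.6]{jrz} and \cite[Theorem 2.2]{gjrz}, adapted to the generic containment $S_c(a_1\cdots a_k)\in\mathcal V$.

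The main obstacle is the hyperforest embedding step. The elements of the form $\ba_{u_1}\cdots\ba_{u_{k-1}}$ identified by rule (4) across different hyperedges sharing a vertex must be handled carefully. The cleanest route, which I would try first, is to cite the corresponding lemma in \cite{jrz}, namely that any $\binom{kn}{2}$-generated subsemiring of $S_{\mathbb H_n}$ in the girth-restricted setting lies in $\mathsf V(S_c(a_1\cdots a_k))$, and merely verify that its hypotheses are met here. The non-$2$-colourability hypothesis is not needed in this direction. It presumably matters only for guaranteeing that suitable words $\mathbf w_n$ with \eqref{eqwn} exist in the later applications.
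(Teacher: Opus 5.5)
Your proposal is correct and is essentially the paper's argument: $S_{\mathbb H_n}$ fails \eqref{eqwn} under $x_v\mapsto\mathbf{a}_v$, while, by the girth bound, every subalgebra generated by few elements sits inside a hyperforest semiring with at most $\binom{kn}{2}$ hyperedges, which lies in $\mathsf V(S_c(a_1\cdots a_k))\subseteq\mathcal V$ by \cite[Lemma 4.2]{jrz} together with the proof of \cite[Theorem 4.9]{jrz}. Your compactness framing (a finite basis in at most $N$ variables, then $n>N$) just rephrases the paper's claim that for each $n$ the $n$-variable identities of $\mathcal V$ do not form a basis, and, as you note, non-$2$-colourability is not used here.
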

\begin{proof}
The proof proceeds as follows. We show that for every $n \geq 2$,
the $n$-variable identities of $\mathcal{V}$ do not form a basis for the equational theory of $\mathcal{V}$. To establish this, it suffices to prove that for every $n \geq 2$, the hypergraph semiring $S_{\mathbb{H}_n}$ does not lie in $\mathcal{V}$, while every $n$-generated subalgebra of $S_{\mathbb{H}_n}$ does lie in $\mathcal{V}$.

Let $n \geq 2$ be an integer.
From the proof of~\cite[Theorem 4.9]{jrz}, we know that each $n$-generated subalgebra $T$ of $S_{\mathbb{H}_n}$ is a subalgebra of a hypergraph semiring $S_{\mathbb{G}^+}$, where $\mathbb{G}^+$ is a $k$-uniform subhypergraph of $\mathbb{H}_n$ with at most $\binom{kn}{2}$ hyperedges. Thus $S_{\mathbb{G}^+}$ is a hyperforest semiring. It follows from~\cite[Lemma 4.2]{jrz} that $S_{\mathbb{G}^+} \in \mathsf{V}(S_c(a_1 \cdots a_k))$ (this improves the parameter $k\binom{kn}{2}$ in the proof of~\cite[Theorem 4.9]{jrz} to $\binom{kn}{2}$). Therefore, $T$ belongs to $\mathcal{V}$.

Now consider the assignment $\varphi: \{x_v\mid v\in V(\mathbb H_n)\} \to S_{\mathbb{H}_n}$
defined by $\varphi(x_v) = \mathbf{a}_v$ for all $v \in V(\mathbb H_n)$.
Then $\varphi(\bt_{\mathbb H_n}) = \mathbf{a}$, and $\varphi(\mathbf{w}_n) \neq \mathbf{a}$,
since $\mathbf{w}_n$ is a non-hyperedge word for $\mathbb{H}_n$.
This shows that $S_{\mathbb{H}_n}$ does not satisfy the identity~\eqref{eqwn}.
Hence $S_{\mathbb{H}_n}$ is not a member of $\mathcal{V}$.
Therefore, $\mathcal{V}$ is nonfinitely based as required.
\end{proof}

\begin{remark}\label{remark:5.3}
In the following, we will frequently use the identities obtained from \eqref{eqwn} by taking $\mathbf{w}_n = x_v^k$, where $v$ is an arbitrary vertex of $\mathbb{H}_n$:
\begin{equation}\label{eqxvk}
\bt_{\mathbb H_n} \approx \bt_{\mathbb H_n} + x_v^k \qquad (n \geq 2).
\end{equation}
\end{remark}


\begin{remark}
\cite[Theorem 4.1]{wzr} states that a finite flat semiring $S$ is nonfinitely based
if there exists $k \geq 3$ such that $S_c(a_1\cdots a_k) \in \mathsf{V}(S)$
and the condition $s^k = s_1\cdots s_k = 0$ forces $s = s_1 = \cdots = s_k = 0$ for all $s, s_1, \ldots, s_k \in S$.
This result is a corollary of Proposition~\ref{thnfb}; indeed, the finiteness assumption is not needed.
\end{remark}

It was shown in~\cite{jrz} that for any finite set of words $W$,
the semirings $M(W)$ and $M_c(W)$ are always nonfinitely based.
We now apply Proposition~\ref{thnfb} to generalize this result and obtain the following interval-type statement.
Let $\mathcal{V}$ be a variety and $\mathcal{W}$ a subvariety of $\mathcal{V}$.
The \emph{interval} $[\mathcal{W}, \mathcal{V}]$ denotes the set of all subvarieties of $\mathcal{V}$ that contain $\mathcal{W}$.

\begin{proposition}
Let $W$ be a finite nonempty set of words and let $k$ be an integer such that
$k > \ell(\mathbf{w})$ for every $\mathbf{w} \in W$.
Let $S$ denote the ai-semiring $M(W)$ or $M_c(W)$.
Then every variety in the interval $[\mathsf{V}(S_c(a_1\cdots a_k)), \mathsf{V}(S)]$ is nonfinitely based.
\end{proposition}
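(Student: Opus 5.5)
Let $\mathcal{U}$ be any variety in the interval $[\mathsf{V}(S_c(a_1\cdots a_k)), \mathsf{V}(S)]$. The plan is to apply Proposition~\ref{thnfb} to $\mathcal{U}$ with the fixed $k\geq 3$. If $k<3$, I would replace $k$ by $3$: $S_c(a_1a_2a_3)$ lies in $\mathsf{V}(S_c(a_1\cdots a_k))$ whenever $k\geq 3$, and the interval only shrinks when the lower bound grows. I would check this reduction separately, or simply assume $k\geq 3$ as in the setting of Proposition~\ref{thnfb}. The containment $S_c(a_1\cdots a_k)\in\mathcal{U}$ holds by hypothesis. It therefore suffices to show that $\mathsf{V}(S)$, and hence $\mathcal{U}$, satisfies the identities \eqref{eqxvk}
\[
\bt_{\mathbb H_n}\approx \bt_{\mathbb H_n}+x_v^k \qquad (n\geq 2),
\]
since $x_v^k$ is a non-hyperedge word. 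Indeed, its image $\mathbf a_v^k$ in $S_{\mathbb H_n}$ is not $\ba$, because hyperedges consist of $k$ distinct vertices and $0$-cancellativity forbids equality; this is the kind of fact recorded in \cite{jrz}.

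The main step is to verify $\bt_{\mathbb H_n}\approx\bt_{\mathbb H_n}+x_v^k$ in $S=M(W)$ or $M_c(W)$. Take an assignment $\varphi$ into $S$ with $\varphi(\bt_{\mathbb H_n})=\mathbf s\neq 0$. Since $S$ is flat, every hyperedge product takes the common value $\mathbf s$. The key point is the empty word $1$, which is not available in $S(W)$. Because $\ell(\mathbf s)\le \max\ell(\mathbf w)<k$ for nonzero elements, in each hyperedge product of length $k$ at least one variable must be sent to $1$. Thus the set $U=\{u\in V : \varphi(x_u)=1\}$ meets every hyperedge.

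Next I would show that no hyperedge lies entirely in $U$, unless $\mathbf s=1$. Take a hyperedge $e\subseteq U$; then $\mathbf s=1$, and every hyperedge product equals $1$. In a free monoid (commutative or not) a product equals $1$ only if all factors are $1$, so every vertex lying on some hyperedge is mapped to $1$. Since $\mathbb H_n$ has no isolated vertices, $\varphi(x_v)=1$, so $\varphi(x_v^k)=1=\mathbf s$, and the identity holds.

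Otherwise $U$ and $V\setminus U$ both meet every hyperedge, which gives a $2$-colouring of $\mathbb H_n$. This contradicts the choice of $\mathbb H_n$ as not $2$-colourable. Hence only the trivial case occurs, and $\varphi(\bt_{\mathbb H_n}+x_v^k)=\varphi(\bt_{\mathbb H_n})$ whenever the left side is nonzero. When $\varphi(\bt_{\mathbb H_n})=0$, both sides are $0$, because $0$ absorbs under addition in a flat semiring.

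I expect the main care to be needed in the length argument: the bound $\ell(\mathbf s)<k$ forces some variable to $1$ in every hyperedge. The result then follows from Proposition~\ref{thnfb}.
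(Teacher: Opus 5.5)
Your argument for $k\geq 3$ is essentially the paper's proof. You use the same split of $\varphi(\bt_{\mathbb H_n})$ into $0$, $1$, or a nonempty word of length $<k$. The value $1$ is handled by the free-monoid fact plus the absence of isolated vertices. For a nonempty word, the length bound forces every hyperedge to contain a vertex sent to $1$ and a vertex not sent to $1$, contradicting non-$2$-colourability.

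You should discard your reduction for $k<3$. Replacing $k$ by $3$ raises the lower endpoint $\mathsf{V}(S_c(a_1\cdots a_k))$ and so shrinks the interval, hence the case $k=3$ says nothing about the original interval. No repair is possible. Since the words of $W$ are nonempty, $k=2$ is the only smaller value allowed. There the interval contains $\mathsf{V}(S_c(a_1a_2))$, which lies in $\mathsf{V}(S_7)\subseteq\mathsf{V}(S)$. That variety is finitely based, because $S_c(ab)\in\mathcal{C}$ lies in the Cross variety $\mathcal{W}_3$. So you must assume $k\geq 3$, as the paper tacitly does by invoking Proposition~\ref{thnfb}.

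The paper also checks, via the subsemiring $\{0,\mathbf{w},1\}\cong S_7$, that the interval is nonempty. This is not needed for the universal statement itself.
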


\begin{proof}
We first show that the interval $[\mathsf{V}(S_c(a_1\cdots a_k)), \mathsf{V}(S)]$ is well-defined.
To this end, we prove that $S_c(a_1\cdots a_k) \in \mathsf{V}(S)$.
Since $W$ is finite, choose $\mathbf{w} \in W$ of maximal length.
Then $\{0, \mathbf{w}, 1\}$ is a subsemiring of both $M(W)$ and $M_c(W)$, and is isomorphic to $S_7$.
By~\cite[Proposition 2.6]{jrz}, we have $S_c(a_1\cdots a_k) \in \mathsf{V}(S_7)$, hence $S_c(a_1\cdots a_k) \in \mathsf{V}(S)$.

Now let $\mathcal{V}$ be an arbitrary variety in $[\mathsf{V}(S_c(a_1\cdots a_k)), \mathsf{V}(S)]$.
To show that $\mathcal{V}$ is nonfinitely based,
it suffices, by Proposition~\ref{thnfb}, to prove that $S$ satisfies the identities~\eqref{eqxvk}.
Indeed, let $\varphi \colon \{x_v\mid v\in V(\mathbb H_n)\} \to S$ be an arbitrary assignment. Consider the following three cases.

\textbf{Case 1.} $\varphi(\bt_{\mathbb H_n}) = 0$. Then
\[
\varphi(\bt_{\mathbb H_n} + x_v^{k})
=\varphi(\bt_{\mathbb H_n}) + \varphi(x_v^{k})
=0+\varphi(x_v^{k})=0=\varphi(\bt_{\mathbb H_n}).
\]

\textbf{Case 2.} $\varphi(\bt_{\mathbb H_n}) = 1$. Then $\varphi(x_v)=1$ for all $v\in V(\mathbb{H}_n)$, and so
\[
\varphi(\bt_{\mathbb H_n} + x_v^{k})=\varphi(\bt_{\mathbb H_n}) + \varphi(x_v^{k})=1+1=1=\varphi(\bt_{\mathbb H_n}).
\]

\textbf{Case 3.} $\varphi(\bt_{\mathbb H_n}) = \mathbf{w}$ for some word $\mathbf{w}$ in $W^{\leq}$.
Then
\[
\varphi(x_{v_1})\varphi(x_{v_2})\cdots \varphi(x_{v_k}) = \mathbf{w}
\]
for every hyperedge $\{v_1, v_2, \ldots, v_k\} \in E(\mathbb{H}_n)$.
Since $\ell(\mathbf{w})<k$,
there exist indices $1 \leq i, j \leq k$ such that $\varphi(x_{v_i}) = 1$ and $\varphi(x_{v_j}) \neq 1$.
Now define a mapping $\psi \colon V(\mathbb{H}_n) \to \{1,2\}$ by
\[
\psi(v) =
\begin{cases}
1, & \text{if } \varphi(x_v) = 1,\\
2, & \text{if } \varphi(x_v) \neq 1.
\end{cases}
\]
This implies that $\mathbb{H}_n$ is $2$-colourable, contradicting the fact that $\mathbb{H}_n$ is not $2$-colourable.
So this case cannot occur.

Thus $S$ satisfies the identity~\eqref{eqwn}, and the desired conclusion follows from Proposition~\ref{thnfb}.
\end{proof}

A \emph{primitive} word is a word that is not a power of any word.
By Propositions~1.3.1 and~1.3.2 in~\cite{loth}, we have the following lemma.

\begin{lemma}\label{primitive}
Every word can be uniquely expressed as a power of a primitive word.
Moreover, a set of words is pairwise commutative if and only if they can be expressed as powers of the same primitive word.
\end{lemma}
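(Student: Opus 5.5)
The statement is Lemma~\ref{primitive}, which is cited from Lothaire. I would still reprove it directly from the basic word-combinatorics fact that two words commute exactly when they are powers of a common word. The plan has four steps.

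\textbf{Step 1 (Lyndon--Sch\"utzenberger).} I first prove by induction on $\ell(\bu)+\ell(\bv)$ that $\bu\bv=\bv\bu$ implies $\bu,\bv$ are powers of a common word $\mathbf z$.
\begin{itemize}
\item If $\ell(\bu)=\ell(\bv)$, then comparing prefixes of $\bu\bv=\bv\bu$ gives $\bu=\bv$, and we take $\mathbf z=\bu$.
\item Otherwise, say $\ell(\bu)<\ell(\bv)$. Comparing prefixes gives $\bv=\bu\bv'$ with $\bv'$ nonempty.
\item Substituting, $\bu\bu\bv'=\bu\bv'\bu$, and cancelling $\bu$ on the left yields $\bu\bv'=\bv'\bu$.
\item Since $\ell(\bu)+\ell(\bv')<\ell(\bu)+\ell(\bv)$, induction gives $\bu=\mathbf z^p$ and $\bv'=\mathbf z^q$, so $\bv=\mathbf z^{p+q}$.
\end{itemize}

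\textbf{Step 2 (existence in the first assertion).} Given a word $\bw$, choose a shortest word $\mathbf p$ such that $\bw=\mathbf p^n$; such a word exists since $\bw=\bw^1$. Then $\mathbf p$ must be primitive. If instead $\mathbf p=\mathbf q^m$ with $m\ge2$, then $\bw=\mathbf q^{mn}$ with $\mathbf q$ shorter than $\mathbf p$, contradicting minimality.

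\textbf{Step 3 (uniqueness).} Suppose $\mathbf p^n=\mathbf q^m$ with $\mathbf p,\mathbf q$ primitive. Both $\mathbf p$ and $\mathbf q$ commute with $\bw=\mathbf p^n=\mathbf q^m$.
\begin{itemize}
\item It is not immediate that $\mathbf p$ and $\mathbf q$ commute with each other. I would derive this from the Fine--Wilf periodicity lemma.
\item The word $\bw$ has periods $\ell(\mathbf p)$ and $\ell(\mathbf q)$, and its length is at least $\ell(\mathbf p)+\ell(\mathbf q)-\gcd(\ell(\mathbf p),\ell(\mathbf q))$. This length bound holds because $\bw$ is a common multiple of both lengths; the degenerate case where one of $n,m$ equals $1$ is trivial.
\item Fine--Wilf then says $\bw$ has period $d=\gcd(\ell(\mathbf p),\ell(\mathbf q))$. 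Hence $\mathbf p$ and $\mathbf q$ are both powers of the prefix of $\bw$ of length $d$.
\item Primitivity forces $\ell(\mathbf p)=d=\ell(\mathbf q)$, so $\mathbf p=\mathbf q$.
\end{itemize}
Proving Fine--Wilf, or finding a replacement argument, is the main obstacle. Otherwise I would simply cite \cite[Prop.~1.3.1]{loth}.

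\textbf{Step 4 (second assertion).} Powers of a single word clearly commute pairwise. Conversely, suppose the words in a set commute pairwise, and take two of them, $\bu$ and $\bv$.
\begin{itemize}
\item By Step 1, $\bu=\mathbf z^p$ and $\bv=\mathbf z^q$ for some word $\mathbf z$.
\item By Step 2, write $\mathbf z=\mathbf r^s$ with $\mathbf r$ primitive.
\item Then $\bu=\mathbf r^{ps}$, so by Step 3 the primitive root of $\bu$ is $\mathbf r$, and likewise for $\bv$.
\end{itemize}
Thus commuting words share the same primitive root. Applying this to every pair shows that all words in the set are powers of one primitive word.
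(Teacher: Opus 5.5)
Your proof is correct, but it does not follow the paper's route: the paper gives no argument of its own and simply cites Propositions~1.3.1 and~1.3.2 of~\cite{loth}. You instead give a self-contained derivation in four parts:
\begin{itemize}
\item the commutation lemma, by induction on $\ell(\bu)+\ell(\bv)$;
\item existence of a primitive root, by minimality;
\item uniqueness of the root, via Fine--Wilf;
\item the second assertion, by combining the first three.
\end{itemize}
Your check of the Fine--Wilf length hypothesis is right. Write $\ell(\mathbf{p})=da$ and $\ell(\mathbf{q})=db$ with $\gcd(a,b)=1$. Then $\ell(\bw)\ge dab\ge d(a+b-1)$, because $(a-1)(b-1)\ge 0$. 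So the hypothesis holds in every case, without any separate degenerate case.

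The step you flag as the main obstacle can be done without any periodicity theorem. Suppose $\bw=\mathbf{p}^n=\mathbf{q}^m$ with $n,m\ge 1$.
\begin{itemize}
\item $\mathbf{p}\mathbf{q}$ is a prefix of $\mathbf{p}\mathbf{q}^m=\mathbf{p}^{n+1}$, hence of $\mathbf{p}^{2n}=\bw^2$.
\item Symmetrically, $\mathbf{q}\mathbf{p}$ is a prefix of $\mathbf{q}^{m+1}$, hence of $\mathbf{q}^{2m}=\bw^2$.
\end{itemize}
Since they are prefixes of the same word and have the same length, $\mathbf{p}\mathbf{q}=\mathbf{q}\mathbf{p}$. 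Your Step~1 then writes $\mathbf{p}$ and $\mathbf{q}$ as powers of a common word. Primitivity forces $\mathbf{p}=\mathbf{q}$, and $n=m$ follows by comparing lengths.

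With this replacement your proof is fully elementary and rests only on Step~1. The paper's citation buys brevity. Your version makes explicit the two facts actually used later in Proposition~\ref{a1akak1} and Theorem~\ref{swnfb}:
\begin{itemize}
\item pairwise commuting words share a primitive root;
\item $\mathbf{p}_e^{t_e}=\mathbf{p}_f^{t_f}$ with $\mathbf{p}_e,\mathbf{p}_f$ primitive forces $\mathbf{p}_e=\mathbf{p}_f$ and $t_e=t_f$.
\end{itemize}
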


The following result is due to Wu et al.~\cite[Proposition 2.7]{wzr}.
\begin{lemma}\label{lemma26072701}
Let $k \geq 1$ be an integer. Then $S_c(a_1 \cdots a_k) \in \mathsf{V}(S(a^{k+1}))$.
\end{lemma}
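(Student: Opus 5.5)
The plan is to realise $S_c(a_1\cdots a_k)$ as an ideal quotient of a subalgebra of a finite direct power of $S(a^{k+1})$, in the same spirit as the proof of Lemma~\ref{w3a3aba}. Let $S$ be the direct product of $k$ copies of $S(a^{k+1})$, with coordinates indexed by $1,\ldots,k$. For $1\le i\le k$ let $\mathbf{e}_i\in S$ have $a^2$ in coordinate $i$ and $a$ in every other coordinate. Let $A$ be the subalgebra of $S$ generated by $\mathbf{e}_1,\ldots,\mathbf{e}_k$. Put $\boldsymbol{\omega}=(a^{k+1},\ldots,a^{k+1})$ and
\[
J=\{\mathbf{x}\in A\mid \boldsymbol{\omega}\notin A^1\mathbf{x}A^1\}.
\]
As in Lemma~\ref{w3a3aba}, $J$ is a multiplicative ideal and an additive order filter of $A$, so the quotient $A/J$ is defined.

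The key computation is an exponent count. Since all coordinates are powers of $a$, the elements $\mathbf{e}_i$ commute pairwise. Take a product of $L$ generators in which $\mathbf{e}_j$ occurs $m_j$ times. Its coordinate $j$ is $a^{L+m_j}$, or $0$ once the exponent exceeds $k+1$.

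Such a product equals $\boldsymbol{\omega}$ exactly when $L+m_j=k+1$ for every $j$. This forces all $m_j$ to equal the same value $m$, and then $L=\sum_j m_j=km$. Hence $km+m=k+1$, so $m=1$ and $L=k$.

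Consequently a product of generators lies outside $J$ precisely when it uses each $\mathbf{e}_i$ at most once, because only then can it be extended to $\mathbf{e}_1\cdots\mathbf{e}_k=\boldsymbol{\omega}$. The values of these square-free products are pairwise distinct: the subset $I\subseteq\{1,\ldots,k\}$ can be read off as the set of coordinates carrying the larger exponent $a^{|I|+1}$ rather than $a^{|I|}$. (For $I=\{1,\ldots,k\}$ all exponents coincide, but that element is $\boldsymbol{\omega}$ itself.)

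Next I deal with additive elements of $A$. Any sum of two distinct products of generators has a $0$ coordinate, because $S(a^{k+1})$ is flat. Every $\mathbf{x}\in A$ having a $0$ coordinate lies in $J$, since then every element of $A^1\mathbf{x}A^1$ also has a $0$ coordinate. Hence $A\setminus J$ consists exactly of the nonempty square-free products $\prod_{i\in I}\mathbf{e}_i$.

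Finally I will check that the bijection $\prod_{i\in I}\mathbf{e}_i/J\mapsto \prod_{i\in I}a_i$, with $J\mapsto 0$, is a semiring isomorphism $A/J\to S_c(a_1\cdots a_k)$. Multiplicatively, both sides multiply disjoint subsets to their union and send overlapping subsets to $0$. Additively, both are flat, with distinct elements summing to $0$. Thus $S_c(a_1\cdots a_k)\in\mathsf{HS}\mathsf{P}(S(a^{k+1}))=\mathsf{V}(S(a^{k+1}))$.

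The only step needing genuine care is the exponent-count argument that membership outside $J$ is equivalent to being square-free; the rest is routine verification. The degenerate case $k=1$ is also covered: there $\mathbf{e}_1=(a^2)=\boldsymbol{\omega}$ and $A/J\cong S(a)$.
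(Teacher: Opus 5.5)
Your proof is correct. The paper gives no argument of its own for this lemma: it imports it from Wu et al.~\cite[Proposition~2.7]{wzr} and uses it only as a black box in the ``if'' direction of Proposition~\ref{a1akak1}. Your proposal is therefore a self-contained replacement for that citation, built with the same direct-power and ideal-quotient technique the paper uses in Lemma~\ref{w3a3aba}. Its real content is the exponent count $(k+1)m=k+1$, and it gives an explicit witness: $S_c(a_1\cdots a_k)$ is a quotient of a subalgebra of the $k$th direct power of $S(a^{k+1})$.

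Three details are worth stating explicitly when you write it up.

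\begin{enumerate}[$(1)$]
\item Defining $J$ through $A^1$, rather than through $S^1$ for the full power $S$, is essential. (The proof of Lemma~\ref{w3a3aba} writes $J$ with $S^1$.) For $k\geq 4$ one has $\mathbf{e}_1^2\cdot(a^{k-3},a^{k-1},\ldots,a^{k-1})=\boldsymbol{\omega}$ in $S$. With $S^1$, the element $\mathbf{e}_1^2$ would therefore survive in the quotient and break the isomorphism.

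\item ``Uses each $\mathbf{e}_i$ at most once'' must be a property of the element, not of a chosen factorisation. This holds because, for a product with no $0$ coordinate, the exponents $L+m_j$ sum to $(k+1)L$. So the value determines $L$, and then every $m_j$.

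\item You justify the order-filter property of $J$ by analogy with Lemma~\ref{w3a3aba}, but it follows directly. The only element of $S$ below $\boldsymbol{\omega}$ is $\boldsymbol{\omega}$ itself, and the order is compatible with multiplication. Hence if $\mathbf{x}\leq\mathbf{y}$ and $\mathbf{s}\mathbf{y}\mathbf{t}=\boldsymbol{\omega}$, then $\mathbf{s}\mathbf{x}\mathbf{t}=\boldsymbol{\omega}$.
\end{enumerate}
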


We now present a proposition that generalizes Lemma~\ref{lemma26072701}
and precisely characterizes when $S_c(a_1 \cdots a_k)$ lies in $\mathsf{V}(S(W))$.

\begin{proposition}\label{a1akak1}
Let $W$ be a nonempty set of words in $X^+$, and let $k \geq 2$ be an integer.
Then $S_c(a_1 \cdots a_k) \in \mathsf{V}(S(W))$ if and only if $W$  is not $x^{k+1}$-free.
\end{proposition}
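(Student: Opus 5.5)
The two directions are handled separately, with Lemma~\ref{lemma26072701} and Corollary~\ref{wwi} doing most of the work for sufficiency.

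\emph{Sufficiency.} Suppose $W$ is not $x^{k+1}$-free. Then $\varphi(x^{k+1})=\bu^{k+1}\in W^{\leq}$ for some nonempty word $\bu$. By Corollary~\ref{wwi}, $S(\bu^{k+1})\in\mathsf V(S(W))$, so it suffices to show $S(a^{k+1})\in \mathsf V(S(\bu^{k+1}))$; then Lemma~\ref{lemma26072701} finishes.

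For this, write $\bu=\bp^r$ with $\bp$ primitive (Lemma~\ref{primitive}). Since $\bp^{k+1}$ is a subword of $\bu^{k+1}$, we may replace $\bu$ by $\bp$. Now consider the subsemiring of $S(\bp^{k+1})$ generated by $\bp$. Its elements are $0,\bp,\bp^2,\ldots,\bp^{k+1}$, and the products are $\bp^i\bp^j=\bp^{i+j}$ when $i+j\le k+1$ and $0$ otherwise. These are exactly the products of $S(a^{k+1})$ via $a^i\mapsto \bp^i$. Because the semiring is flat, addition is automatically compatible. So $S(a^{k+1})$ embeds as a subalgebra, and primitivity is not even needed here.

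\emph{Necessity.} Suppose $W$ is $x^{k+1}$-free, so $S(W)$ satisfies $x^{k+1}\approx0$. The aim is to exhibit an identity of $\mathsf V(S(W))$ that fails in $S_c(a_1\cdots a_k)$. A single word identity will not do, since $S_c(a_1\cdots a_k)$ is $k$-nil but not $(k+1)$-nil-free-failing in a simple way. Instead I would use an identity in the spirit of \eqref{eqxvk}, such as
\[
x_1x_2\cdots x_k + x_2x_1x_3\cdots x_k+\cdots \text{ (all permutations)} \approx \text{(same)} + x_1^{k}.
\]
This fails in $S_c(a_1\cdots a_k)$: send $x_i\mapsto a_i$, so the left side is $a_1\cdots a_k$ while $a_1^k=0$.

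To check it holds in $S(W)$, take an assignment $\varphi$ giving a nonzero value on the left. Then all permutation products of $\varphi(x_1),\ldots,\varphi(x_k)$ equal the same word $\bw$. In particular $\varphi(x_i)\varphi(x_j)$ and $\varphi(x_j)\varphi(x_i)$ appear as prefixes of equal words of equal length, so these words pairwise commute. By Lemma~\ref{primitive} they are powers of a common primitive word $\bp$, so $\bw=\bp^m$ with $m\ge k$. Then $\bp^{k+1}$ would be a subword of $\bw$ unless $m=k$, in which case each $\varphi(x_i)=\bp$ and $\varphi(x_1^k)=\bw$. The case $m\ge k+1$ is excluded by $x^{k+1}$-freeness. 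Hence the identity holds.

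The main obstacle is this necessity step: choosing an identity that separates the two algebras, and verifying it carefully, in particular the commuting argument. An alternative that avoids the identity is to assume $S_c(a_1\cdots a_k)\in\mathsf V(S(W))$ and argue directly. I expect the identity route to be cleaner.
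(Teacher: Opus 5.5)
Your proof is correct and follows the paper's argument: sufficiency via Corollary~\ref{wwi}, the copy of $S(a^{k+1})$ generated by $\bw$ inside $S(\bw^{k+1})$, and Lemma~\ref{lemma26072701}; necessity via the identity $\sum_{\sigma\in S_k} x_{\sigma(1)}\cdots x_{\sigma(k)}\approx \sum_{\sigma\in S_k} x_{\sigma(1)}\cdots x_{\sigma(k)}+x_1^k$ together with the commutation and primitive-word argument. The differences are cosmetic: you verify the identity in $S(W)$ directly under $x^{k+1}$-freeness, whereas the paper assumes membership and argues by contradiction; and, as you observe, the reduction to a primitive $\bp$ in the sufficiency part is unnecessary.
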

\begin{proof}
Suppose that $W$ is not $x^{k+1}$-free.
Then there exists a word $\bw$ such that $\bw^{k+1} \in W^{\leq}$.
By Corollary~\ref{wwi}, $S(\bw^{k+1}) \in \mathsf{V}(S(W))$.
It is easy to see that $S(a^{k+1})$ is isomorphic to the subsemiring of $S(\bw^{k+1})$ generated by $\bw$.
Hence $S(a^{k+1}) \in \mathsf{V}(S(W))$.
By Lemma~\ref{lemma26072701}, $S_c(a_1 \cdots a_k) \in \mathsf{V}(S(a^{k+1}))$.
Therefore, $S_c(a_1 \cdots a_k) \in \mathsf{V}(S(W))$.

Conversely,
assume that $S_c(a_1 \cdots a_k) \in \mathsf{V}(S(W))$.
Let $\bu$ denote the polynomial
\[
\sum_{\sigma \in S_k} x_{{\sigma}(1)} x_{{\sigma}(2)} \cdots x_{{\sigma}(k)},
\]
where $S_k$ is the symmetric group of degree $k$.

Since $S_c(a_1 \cdots a_k)$ does not satisfy the identity $\bu \approx \bu+x_1^k$
(for instance, under the assignment $x_i \mapsto a_i$), neither does $S(W)$.
Hence there exists an assignment
$\varphi \colon \{x_1, \ldots, x_k\} \to S(W)$
such that $\varphi(\bu) \neq \varphi(\bu)+ \varphi(x_1^k)$,
which implies that $\varphi(\bu) \neq 0$.
Thus the words $\varphi(x_i) = \bw_{i} \in W^{\leq}$ satisfy
\begin{equation} \label{eq:sumsk}
\varphi(\bu) = \sum_{\sigma \in S_k} \bw_{{\sigma}(1)} \bw_{{\sigma}(2)} \cdots \bw_{{\sigma}(k)} \neq 0.
\end{equation}
Since $S(W)$ is a flat semiring, all summands of $\varphi(\bu)$ are nonzero and identical.
This implies that, for all $1 \leq i < j \leq k$,
\[
\bw_i \bw_j \cdot \prod_{\substack{1 \leq t \leq k \\ t \neq i,j}} \bw_t
=
\bw_j \bw_i \cdot \prod_{\substack{1 \leq t \leq k \\ t \neq i,j}} \bw_t.
\]
Thus $\bw_i \bw_j = \bw_j \bw_i$,
and so the words $\bw_1, \ldots, \bw_k$ commute pairwise.
By Lemma~\ref{primitive},
these words can be written as powers of a common primitive word $\bp$;
that is, for each $1 \leq i \leq k$, there exists $s_i \geq 1$ with $\varphi(x_i) = \bp^{s_i}$.
Then $\varphi(\bu) = \bp^s$, where $s = s_1 + \cdots + s_k \geq k$.

We claim that $s > k$; otherwise, if $s = k$, then $s_1 = \cdots = s_k = 1$,
and so $\varphi(x_1^k) = \bp^k=\bp^s$.
This implies that $\varphi(\bu) = \varphi(\bu)+ \varphi(x_1^k)$, a contradiction.
Thus $s > k$.
Since $W^{\leq}$ is closed under taking subwords and $\bp^s \in W^{\leq}$,
it follows that $\bp^{k+1} \in W^{\leq}$.
Therefore, $W$  is not $x^{k+1}$-free.
\end{proof}

\begin{theorem}\label{swnfb}
Let $W$ be a nonempty set of words in $X^+$ such that, for some $k \geq 3$,
$W$ is $x^{k+2}$-free, but not $x^{k+1}$-free.
Then every variety in the interval $[\mathsf{V}(S_c(a_1\cdots a_k)), \mathsf{V}(S(W))]$ is nonfinitely based.
In particular, $S(W)$ is nonfinitely based.
\end{theorem}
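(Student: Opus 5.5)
By Proposition~\ref{thnfb}, the theorem reduces to two facts: $S_c(a_1\cdots a_k)$ lies in $\mathsf{V}(S(W))$, and $S(W)$ satisfies the identities~\eqref{eqxvk} of Remark~\ref{remark:5.3}. The first fact is exactly Proposition~\ref{a1akak1}, since $W$ is not $x^{k+1}$-free. It makes the interval $[\mathsf{V}(S_c(a_1\cdots a_k)),\mathsf{V}(S(W))]$ well defined.

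For the second fact, let $\mathcal{V}$ be any variety in the interval. Then $\mathcal{V}$ contains $S_c(a_1\cdots a_k)$, and it satisfies every identity of $S(W)$ because $\mathcal{V}\subseteq\mathsf{V}(S(W))$. So it suffices to show that $S(W)$ satisfies
\[
\bt_{\mathbb H_n}\approx\bt_{\mathbb H_n}+x_v^k
\]
for all $n\geq 2$ and every vertex $v$. Since $k\geq 2$, the word $x_v^k$ evaluates to $\ba_v^k=0\neq\ba$ in $S_{\mathbb H_n}$, so it is a non-hyperedge word. Once the identities hold, Proposition~\ref{thnfb} yields non-finite basis for every such $\mathcal{V}$, including $\mathsf{V}(S(W))$ itself.

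Fix an assignment $\varphi$ into $S(W)$. If $\varphi(\bt_{\mathbb H_n})=0$ there is nothing to prove, so assume $\varphi(\bt_{\mathbb H_n})=\bw\neq 0$. Because $S(W)$ is flat, every hyperedge product evaluates to the same word $\bw$.

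Take a hyperedge $e=\{v_1,\dots,v_k\}$. All $k!$ orderings of $x_{v_1}\cdots x_{v_k}$ occur in $\bt_{\mathbb H_n}$, so the argument of Proposition~\ref{a1akak1} applies: swapping two adjacent factors and cancelling in the free semigroup shows that the words $\varphi(x_{v_i})$ commute pairwise. By Lemma~\ref{primitive}, each $\varphi(x_{v_i})$ equals $\bp^{s_i}$ for a common primitive word $\bp$, and $\bw=\bp^{s}$ with $s=\sum_i s_i\geq k$.

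By the uniqueness part of Lemma~\ref{primitive}, $\bp$ and $s$ are determined by $\bw$ alone, so they are the same for every hyperedge. Since $\bp^s\in W^{\leq}$ and $W$ is $x^{k+2}$-free, we get $s\leq k+1$. This leaves two cases.

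\textbf{Case $s=k+1$.} In every hyperedge exactly one vertex is sent to $\bp^2$ and the remaining $k-1\geq 2$ vertices are sent to $\bp$. Colour a vertex $1$ if $\varphi(x_v)=\bp$ and $2$ otherwise. Every vertex lies in some hyperedge, so this colouring is defined on all of $V(\mathbb H_n)$, and it makes no hyperedge monochromatic. This contradicts the choice of $\mathbb H_n$ as not $2$-colourable, so this case cannot occur.

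\textbf{Case $s=k$.} Every vertex in every hyperedge is sent to $\bp$. Since there are no isolated vertices, this holds for every vertex $v$, so $\varphi(x_v^k)=\bp^k=\bw$ and $\varphi(\bt_{\mathbb H_n}+x_v^k)=\varphi(\bt_{\mathbb H_n})$.

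The step I expect to be the main obstacle is the exclusion of $s=k+1$. This is the only place where both the hypotheses on $W$ and the non-$2$-colourability of $\mathbb H_n$ are used. The key points are that $\bp$ and $s$ are global, which follows from the uniqueness of primitive roots, and that $k\geq 3$ guarantees each hyperedge has at least two vertices of colour $1$ (one with colour $1$ would already suffice). The large girth of $\mathbb H_n$ is not needed in this verification; it enters only through Proposition~\ref{thnfb}.
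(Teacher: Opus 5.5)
Your proposal is correct and follows essentially the same route as the paper. You reduce via Proposition~\ref{thnfb} and Proposition~\ref{a1akak1} to checking the identities~\eqref{eqxvk} in $S(W)$. You then get pairwise commutation on each hyperedge, use uniqueness of primitive roots to make $\bp$ and the exponent global, use $x^{k+2}$-freeness to limit the exponent to $k$ or $k+1$, and use non-$2$-colourability to exclude $k+1$. You are slightly more explicit than the paper in two places: you check that $x_v^k$ is a non-hyperedge word, and you note that the absence of isolated vertices is what lets the case $s=k$ cover every vertex.
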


\begin{proof}
Since $W$ is not $x^{k+1}$-free, Proposition~\ref{a1akak1} implies that  $S_c(a_1 \cdots a_k)$ lies in the variety $\mathsf{V}(S(W))$. Hence, the interval $[\mathsf{V}(S_c(a_1\cdots a_k)), \mathsf{V}(S(W))]$ is well-defined.

Let $\mathcal{V}$ be an arbitrary variety in $[\mathsf{V}(S_c(a_1\cdots a_k)), \mathsf{V}(S(W))]$.
To show that $\mathcal{V}$ is nonfinitely based,
it suffices, by Proposition~\ref{thnfb} and Remark~\ref{remark:5.3}, to show that $S(W)$ satisfies the identities~\eqref{eqxvk}.
Let $\varphi \colon \{x_v\mid v\in V(\mathbb H_n)\} \to S(W)$ be an arbitrary assignment.
If $\varphi(\bt_{\mathbb H_n}) = 0$, then
\[
\varphi(\bt_{\mathbb H_n} + x_v^k)=\varphi(\bt_{\mathbb H_n})+\varphi(x_v^k)=0+\varphi(x_v^k)=0=\varphi(\bt_{\mathbb H_n}).
\]

Now suppose that $\varphi(\bt_{\mathbb H_n}) \neq 0$.
Then there exists a word $\mathbf{w} \in W^{\leq}$ such that
\[
\varphi(x_{v_1}) \varphi(x_{v_2}) \cdots \varphi(x_{v_k}) = \mathbf{w}
\]
for every hyperedge $e = \{v_1, \ldots, v_k\} \in E(\mathbb{H}_n)$.
Notice that for any pair of distinct indices $i$ and $j$, we have
\[
\varphi(x_{v_i}) \varphi(x_{v_j}) \bw_{ij} = \varphi(x_{v_j}) \varphi(x_{v_i}) \bw_{ij} = \mathbf{w},
\]
where $\bw_{ij} = \prod_{\substack{1 \leq t \leq k \\ t \neq i, j}} \varphi(x_{v_t})$.
By the $0$-cancellative law of $S(W)$, we obtain
\[
\varphi(x_{v_i}) \varphi(x_{v_j}) = \varphi(x_{v_j}) \varphi(x_{v_i}) \neq 0.
\]
By Lemma~\ref{primitive},
there exists a primitive word $\mathbf{p}_e \in W^{\leq}$ such that
for each $1 \leq i \leq k$ we have $r_i \geq 1$ with $\varphi(x_{v_i}) = \mathbf{p}_e^{r_i}$.
Thus $\mathbf{w} = \mathbf{p}_e^{t_e}$, where $t_e = r_1 + \cdots + r_k \geq k$.
Since every word in $W$ is $x^{k+2}$-free,
it follows that $t_e \in \{k, k+1\}$,
and so $\varphi(x_{v_i})$ is $\mathbf{p}_e$ or $\mathbf{p}_e^{2}$ for every $1\leq i \leq k$.
Similarly, for any hyperedge $f=\{w_1, \ldots, w_k\} \in E(\mathbb{H}_n)$,
there exist a word $\mathbf{p}_f \in W^{\leq}$ and $t_f \in \{k, k+1\}$ such that $\mathbf{w} = \mathbf{p}_f^{t_f}$,
and $\varphi(x_{w_i})$ is $\mathbf{p}_f$ or $\mathbf{p}_f^{2}$ for every $1\leq i \leq k$..
Note that $\mathbf{w} = \mathbf{p}_e^{t_e} = \mathbf{p}_f^{t_f}$, and $\mathbf{p}_e, \mathbf{p}_f$ are primitive.
By Lemma~\ref{primitive} again, $\mathbf{p}_e = \mathbf{p}_f$ and $t_e = t_f$.
Hence there exists a primitive word $\mathbf{p}$ and $t\in \{k, k+1\}$ such that
$\mathbf{w} = \mathbf{p}^t$ and $\varphi(x_v)$ is either $\mathbf{p}$ or $\mathbf{p}^2$ for every $v\in V(\mathbb{H}_n)$.

Consider the following two cases.

\textbf{Case 1.} $t = k$. Then $\varphi(x_v) = \mathbf{p}$ for every $x_v \in V(\mathbb{H}_n)$.
Thus $\varphi(x_v^k) = \varphi(\bt_{\mathbb H_n})=\mathbf{p}^k$,
and so
\[
\varphi(\bt_{\mathbb H_n} + x_v^k)=\varphi(\bt_{\mathbb H_n})+\varphi(x_v^k)
=\mathbf{p}^k+\mathbf{p}^k=\mathbf{p}^k=\varphi(\bt_{\mathbb H_n}).
\]

\textbf{Case 2.} $t = k+1$. Then, in the sense of a multiset,
\[
\{\varphi(x_{v_1}), \ldots, \varphi(x_{v_k})\} = \{\mathbf{p}^2, \mathbf{p}, \ldots, \mathbf{p}\}
\]
for every $\{v_1, v_2, \ldots, v_k\} \in E(\mathbb{H}_n)$.
This implies that $\mathbb{H}_n$ is $2$-colourable, contradicting the choice of $\mathbb{H}_n$.
So this case cannot occur.

Thus $S(W)$ satisfies the identities~\eqref{eqxvk}, and so the desired conclusion follows.
\end{proof}

\begin{remark}
Theorem~\ref{swnfb} generalizes \cite[Corollary 4.4]{wzr}, which states the following:
Let $W$ be a finite nonempty set of words in $X^+$ and let $k+1$ denote the number
$\max\{m\geq 1 \mid (\exists a \in X)~ a^m\in W^{\leq}\}$ with $k\geq 3$.
If $S(W)$ does not contain the $k$th power of any word of length more than $1$,
then it is nonfinitely based.

Every flat semiring $S(W)$ satisfying the conditions of \cite[Corollary 4.4]{wzr}
also satisfies the condition of Theorem~\ref{swnfb}.
Therefore, \cite[Corollary 4.4]{wzr} is a direct corollary of Theorem~\ref{swnfb}.
In fact, Theorem~\ref{swnfb} applies to flat semirings $S(W)$ that are not necessarily finite,
and its criterion is weaker;
thus it yields a stronger and more general result.

For example, let
\[
W' = \{x_1\cdots x_n \mid n \geq 1\} \cup \{x_1^4\}, \qquad
W'' = \{(xy)^4\}.
\]
Both $S(W')$ and $S(W'')$ satisfy the conditions of Theorem~\ref{swnfb} (take $k=5$),
so Theorem~\ref{swnfb} shows that they are both nonfinitely based.
In contrast, \cite[Corollary 4.4]{wzr} does not apply to either example.
\end{remark}

\begin{corollary}\label{scwk1}
Let $k \geq 3$ be an integer.
Then every variety in the interval $[\mathsf{V}(S_c(a_1a_2\cdots a_k)), \mathcal{W}_{k+1}]$ is nonfinitely based.
In particular, $\mathcal{W}_{k+1}$ is nonfinitely based.
\end{corollary}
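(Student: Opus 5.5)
This is a direct specialization of Theorem~\ref{swnfb} to $W = W_{k+1}$, the set of all words of length $k+1$. By definition $\mathcal{W}_{k+1} = \mathsf{V}(S(W_{k+1}))$, so the interval in the statement is exactly the interval $[\mathsf{V}(S_c(a_1\cdots a_k)), \mathsf{V}(S(W))]$ of Theorem~\ref{swnfb}. It therefore suffices to check that, for $k\geq 3$, the set $W_{k+1}$ is $x^{k+2}$-free but not $x^{k+1}$-free.

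\textbf{Not $x^{k+1}$-free.} Take any letter $a\in X$. Then $a^{k+1}\in W_{k+1}\subseteq W_{k+1}^{\leq}$, which is the image of $x^{k+1}$ under the homomorphism $x\mapsto a$. Hence $W_{k+1}$ is not $x^{k+1}$-free.

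\textbf{$x^{k+2}$-free.} Let $\varphi\colon X^+\to X^+$ be any semigroup homomorphism. Then $\ell(\varphi(x^{k+2})) = (k+2)\,\ell(\varphi(x)) \geq k+2$. Every word in $W_{k+1}^{\leq}$ has length at most $k+1$, so $\varphi(x^{k+2})\notin W_{k+1}^{\leq}$. By the criterion recalled in Section~\ref{sec:prelim}, $W_{k+1}$ is $x^{k+2}$-free. Equivalently, $S(W_{k+1})$ is $(k+2)$-nilpotent, so it satisfies $x^{k+2}\approx 0$.

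With both hypotheses verified, Theorem~\ref{swnfb} shows that every variety in $[\mathsf{V}(S_c(a_1\cdots a_k)), \mathcal{W}_{k+1}]$ is nonfinitely based. The interval is nonempty because Proposition~\ref{a1akak1} gives $S_c(a_1\cdots a_k)\in\mathcal{W}_{k+1}$. Since $\mathcal{W}_{k+1}$ itself lies in this interval, it is nonfinitely based.

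All steps are routine; the only care needed is matching indices. The word length here is $k+1$, so the parameter in Theorem~\ref{swnfb} is $k$ itself, and the requirement $k\geq 3$ is exactly the hypothesis of the corollary.
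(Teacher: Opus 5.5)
Your proposal is correct and matches the paper's proof: both specialize Theorem~\ref{swnfb} to $W = W_{k+1}$, using that $W_{k+1}$ is $x^{k+2}$-free (every word in $W_{k+1}^{\leq}$ has length at most $k+1$) but not $x^{k+1}$-free (it contains $a^{k+1}$). You also write out these two verifications explicitly, which the paper only asserts.
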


\begin{proof}
Recall that $\mathcal{W}_{k+1}$ is generated by $S(W_{k+1})$,
where $W_{k+1}$ denotes the set of all words in $X^+$ of length $k+1$.
Every word in $W_{k+1}$ is $x^{k+2}$-free, but there is a word in $W_{k+1}$ is not $x^{k+1}$-free.
The required result now follows from Theorem~\ref{thnfb}.
\end{proof}

\begin{corollary}\label{wknfb}
Let $k \geq 1$ be an integer.
Then the variety $\mathcal{W}_k$ is finitely based if and only if $k \leq 3$.
\end{corollary}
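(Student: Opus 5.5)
The statement is a biconditional, and each direction is already supplied by a result proved above. The only work is to match the indices correctly.

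For the direction ``$k\leq 3$ implies $\mathcal{W}_k$ is finitely based'', invoke Corollary~\ref{coro2026072610} directly. Alternatively, recall that $\mathcal{W}_k$ is generated by $S(W_k)$, and every word of $W_k$ has length at most $3$ when $k\le 3$. Corollary~\ref{4swfb} then shows that $\mathcal{W}_k$ is a Cross variety and hence finitely based.

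For the converse, suppose $k\geq 4$ and write $k=m+1$ with $m\geq 3$. The plan is to apply Corollary~\ref{scwk1} with parameter $m$. That corollary states that $\mathcal{W}_{m+1}=\mathcal{W}_k$ is nonfinitely based. To confirm that its hypotheses genuinely hold, we check the conditions of Theorem~\ref{swnfb} for $W=W_{m+1}$ with parameter $m$.
\begin{itemize}
\item $W_{m+1}$ is $x^{m+2}$-free, since every word in $W_{m+1}^{\leq}$ has length at most $m+1<m+2$, while the image of $x^{m+2}$ under any homomorphism has length at least $m+2$.
\item $W_{m+1}$ is not $x^{m+1}$-free, since $a^{m+1}\in W_{m+1}$.
\end{itemize}
Theorem~\ref{swnfb} then places $\mathcal{W}_{m+1}$ itself at the top of the interval $[\mathsf{V}(S_c(a_1\cdots a_m)),\mathcal{W}_{m+1}]$, so it is nonfinitely based.

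Putting the two directions together: $\mathcal{W}_k$ is finitely based for $k\in\{1,2,3\}$ and nonfinitely based for every $k\geq 4$, which is exactly the claimed equivalence.

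There is no real obstacle here. The only point needing care is the index shift: the corollary is stated for $\mathcal{W}_{k+1}$ with $k\geq 3$, so it covers exactly the varieties $\mathcal{W}_k$ with $k\ge 4$, and these together with the cases $k\leq 3$ exhaust all $k\geq 1$.
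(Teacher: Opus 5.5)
Your proof is correct and is the paper's argument: the paper derives this corollary by combining Corollary~\ref{coro2026072610} (finite basis for $k\le 3$) with Corollary~\ref{scwk1} (nonfinite basis for $\mathcal{W}_{k+1}$, $k\ge 3$), using the same index shift you describe. Your explicit check that $W_{m+1}$ is $x^{m+2}$-free but not $x^{m+1}$-free, so that Theorem~\ref{swnfb} applies, is accurate and simply unpacks the proof of Corollary~\ref{scwk1}.
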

\begin{proof}
This is a consequence of Corollaries~\ref{coro2026072610} and \ref{scwk1}.
\end{proof}

\begin{corollary}\label{ak1w}
Let $W$ be a finite nonempty set of words in $X^+$.
If $W$ is not $x^4$-free, then the flat semiring $S(W)$ is nonfinitely based.
\end{corollary}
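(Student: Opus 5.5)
Since $W$ is finite, the words in $W$ have bounded length, so there is a largest exponent $m$ for which $W$ is not $x^{m}$-free. Concretely, $m$ is the largest integer such that $\bp^{m}\in W^{\leq}$ for some nonempty word $\bp$. It suffices to take $\bp$ primitive, and in any case $m$ is bounded by $\max\{\ell(\bw)\mid \bw\in W\}$. The hypothesis that $W$ is not $x^4$-free gives $m\geq 4$. By maximality, $W$ is $x^{m+1}$-free.

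Set $k=m-1$, so that $k\geq 3$. Then $W$ is $x^{k+2}$-free but not $x^{k+1}$-free, which is exactly the hypothesis of Theorem~\ref{swnfb}. That theorem yields that every variety in $[\mathsf{V}(S_c(a_1\cdots a_k)),\mathsf{V}(S(W))]$ is nonfinitely based, and in particular $S(W)$ is nonfinitely based.

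The only point needing care is that $m$ is well defined. Nonemptiness of the set of admissible exponents is the hypothesis. Boundedness holds because $\bp^{j}\in W^{\leq}$ forces $j\leq j\,\ell(\bp)\leq \max_{\bw\in W}\ell(\bw)$, and finiteness of $W$ makes this maximum finite. One should also check the monotonicity fact used implicitly: if $W$ is not $x^{j}$-free then it is not $x^{i}$-free for every $i\leq j$, because $W^{\leq}$ is closed under subwords. Together these show that "$x^{m+1}$-free but not $x^{m}$-free" holds for the maximal $m$. There is no real obstacle here; the corollary is an immediate application of Theorem~\ref{swnfb} once $k$ is chosen this way.
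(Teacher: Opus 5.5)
Your proposal is correct and is essentially the paper's proof. The paper likewise takes the extremal exponent: its $k\geq 5$, with $W$ being $x^{k}$-free but not $x^{k-1}$-free, is your $m+1$. It then applies Theorem~\ref{swnfb} with parameter $m-1\geq 3$, exactly as you do.

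Your argument that the maximal $m$ exists, bounded by the maximum length of words in $W$, is a detail the paper leaves implicit. The monotonicity remark is harmless but unnecessary, since maximality of $m$ alone already gives $x^{m+1}$-freeness.
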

\begin{proof}
Since $W$ is finite and contains a word that is not $x^4$-free,
there exists $k\geq 5$ such that every word in $W$ is $x^{k}$-free, but at least one word in $W$ is not $x^{k-1}$-free.
By Theorem~\ref{swnfb}, the required result holds.
\end{proof}

\subsection*{Acknowledgment}
Miaomiao Ren, corresponding author, is supported by National Natural Science Foundation of China (12371024, 12571020).
Xianzhong Zhao is supported by National Natural Science Foundation of China (12571020).

\section{Conclusion}
In this paper, we have studied the finite basis problem for flat semirings of the form $S(W)$,
where $W$ is not necessarily finite.
Our main results establish a sharp dichotomy: on the one hand,
$S(W)$ is finitely based (indeed, generates a Cross variety)
whenever every word in $W$ has length at most $3$ (Corollary~\ref{4swfb});
on the other hand, $S(W)$ is nonfinitely based whenever there exists $k \geq 3$ such that
$W$ is $x^{k+2}$-free but not $x^{k+1}$-free (Theorem~\ref{swnfb}).
As a consequence,
we obtain the complete classification: $\mathcal{W}_k$ is finitely based if and only if $k \leq 3$ (Corollary~\ref{wknfb}).
Moreover, $S(W)$ is nonfinitely based if $W$ is finite and not $x^4$-free (Corollary~\ref{ak1w}).
These results provide a partial solution to an open problem of Jackson et al.~\cite{jrz}.

It should be noted, however, that the condition in Theorem~\ref{swnfb} is not necessary.
For instance, \cite[Theorem 5.5]{rjzl} provides a family of nonfinitely based flat semirings of the form $S(W)$
in which every word in $W$ is $x^2$-free.
Specifically, for each integer $n>2$, define the word
\[
\boldsymbol{\ell}_n = x_1 \left( \prod_{i=1}^{n-1} x_{i+1} x_i \right) x_n.
\]
Then, whenever $M$ is an infinite subset of the set of integers greater than $2$ whose complement is also infinite,
the flat semiring $S(\boldsymbol{\ell}_n \mid n \in M)$ is nonfinitely based.
However, these algebras do not satisfy the condition of Theorem~\ref{swnfb}.

Based on Corollaries~\ref{4swfb} and \ref{ak1w}, we conjecture that the following results are true.

\begin{conjecture}
Let $W$ be a finite nonempty set of words in $X^+$.
Then the flat semiring $S(W)$ is finitely based if and only if
every word in $W$ has length at most $3$.
\end{conjecture}

The above conjecture naturally suggests a broader formulation, in which the finiteness assumption on $W$ is dropped:

\begin{conjecture}
Let $W$ be a nonempty set of words in $X^+$.
Then the flat semiring $S(W)$ is finitely based if and only if
every word in $W$ has length at most $3$.
\end{conjecture}

Resolving these conjectures would complete the classification of the finite basis problem for all flat semirings of the form $S(W)$.
The present paper has taken the first substantial step in this direction.

\end{document}